\documentclass[mnsc,sglanonrev]{informs4}
\RequirePackage{tgtermes}
\RequirePackage{newtxtext}
\RequirePackage{newtxmath}
\RequirePackage{bm}
\RequirePackage{endnotes}

\OneAndAHalfSpacedXI 

\usepackage{algorithm}
\usepackage{algpseudocode}
\usepackage{tikz}

\usepackage{natbib}
 \bibpunct[, ]{(}{)}{,}{a}{}{,}%
 \def\bibfont{\small}%
\EquationsNumberedThrough    

\TheoremsNumberedThrough     
\ECRepeatTheorems  %

\MANUSCRIPTNO{}

\usepackage{mathtools}
\usepackage[T1]{fontenc}
\usepackage{fix-cm}
\usepackage{enumitem}

\def\E{{\mathbb E}}

\def\R{{\mathbb R}}

\def\ca{{\mathcal A}}
\def\cb{{\mathcal B}}

\def\ci{{\mathcal I}}

\def\cm{{\mathcal M}}

\def\co{{\mathcal O}}

\def\cs{{\mathcal S}}

\def\cx{{\mathcal X}}

\usepackage[linktocpage,colorlinks,linkcolor=blue,anchorcolor=blue,citecolor=blue,urlcolor=blue,pagebackref]{hyperref}
\hypersetup{}
\renewcommand*{\backref}[1]{\ifx#1\relax \else ~ \fi}

\begin{document}


\RUNAUTHOR{Chen and Zhao}



\RUNTITLE{Policy Optimization}

\TITLE{Benign Nonconvex Landscape for Policy Optimization: Infinite-Horizon Discounted MDPs with General State and Action Spaces}

\ARTICLEAUTHORS{%
\AUTHOR{Xin Chen}
\AFF{Naveen Jindal School of Management, University of Texas at Dallas, \EMAIL{xin.chen@utdallas.edu}}

\AUTHOR{Minda Zhao}
\AFF{H. Milton Stewart School of Industrial and Systems Engineering, Georgia Tech, \EMAIL{mindazhao@gatech.edu}}
} 

\ABSTRACT{We study the optimization landscape for infinite-horizon discounted Markov decision processes (MDPs) with general state and action spaces under structured stationary policy classes. A general weighted policy-iteration approach to establishing global convergence guarantees for policy gradient methods requires closure under weighted policy improvement at every policy, a property that may fail even when the policy class contains an optimal policy. To address this issue, we propose weaker conditions that guarantee the absence of suboptimal stationary points and establish the Polyak--{\L}ojasiewicz--Kurdyka (P{\L}K) condition for the policy gradient objective with a finite concentrability coefficient. We also establish the P{\L}K condition from a policy-improvement bound that holds at every state, without a concentrability assumption. Our general results encompass settings covered by the earlier framework when the common standing assumptions hold for the same policy class and parameter domain. We further verify our proposed conditions for two operations models: inventory systems with Markov-modulated demand and stochastic cash-balance problems. For both models, the Bellman equation yields approximate convexity of the $Q$-value functions in the action variable, with deviations controlled by the first-order stationarity measure. These estimates establish exponent-one and, under additional curvature assumptions, exponent-two P{\L}K conditions, which, together with Lipschitz continuity of the policy gradient, imply an $\mathcal{O}(1/\epsilon)$ iteration complexity and linear convergence, respectively, for projected gradient descent using exact policy gradients. To the best of our knowledge, we provide the first non-asymptotic convergence rates for solving infinite-horizon discounted inventory systems with Markov-modulated demand and stochastic cash-balance problems using policy gradient methods.}




\KEYWORDS{
    infinite-horizon discounted Markov Decision Processes (MDPs), Polyak--{\L}ojasiewicz--Kurdyka (P{\L}K) condition, policy gradient methods, inventory, cash-balance
} 

\maketitle

\section{Introduction}\label{sec: introduction}
Reinforcement learning (RL) has found applications in a wide range of domains, including game-playing AI \citep{mnih2015human,silver2016mastering}, the training of large language models \citep{ouyang2022training, guo2025deepseek}, robotics \citep{kober2013reinforcement,hwangbo2019learning}, healthcare \citep{komorowski2018artificial}, and operations management \citep{gijsbrechts2022can,alvo2023deep}. Policy gradient methods constitute an important class of RL algorithms that directly optimize a parameterized policy by applying first-order methods to its expected cumulative cost \citep{sutton1999policy,silver2014deterministic}. This approach is particularly attractive for Markov Decision Processes (MDPs) with continuous state and action spaces, where solving the Bellman equation over all admissible policies can be difficult, while structural analysis often identifies a low-dimensional policy class containing an optimal policy. Classic examples include linear policies for linear quadratic regulator (LQR) problems and base-stock policies for inventory models. Parameterizing such a policy class reduces the original control problem to a finite-dimensional optimization problem. However, the resulting objective is generally nonconvex \citep{agarwal2021theory}, so first-order stationarity alone does not guarantee global optimality.

The central question is whether the structure of an MDP can yield global optimality guarantees despite this nonconvexity. \citet{bhandari2024global} connect policy gradient methods with weighted policy iteration. For each current policy, the weighted policy-iteration objective evaluates candidate policies using the current policy's value function and discounted occupancy measure. Their exact-closure framework requires that, for every current policy in the class, the parameterized class contains a minimizer of the weighted policy-iteration objective over all admissible policies, and the objective, viewed as a function of the candidate policy parameters, has no suboptimal stationary points. The closure requirement can fail even when the class contains an optimal policy for the MDP. We demonstrate this failure in an inventory model: the base-stock class contains an optimal policy for the MDP, but no candidate base-stock policy minimizes the weighted policy-iteration objective associated with a particular current base-stock policy.

Our first result requires the two structural conditions only when the current parameter is a stationary point of the policy gradient objective. Under the stated regularity and exploration assumptions, these conditions suffice to exclude suboptimal stationary points. For quantitative guarantees, we bound two differences in the weighted policy-iteration objective: the improvement obtained by optimizing over the parameterized class, and the additional improvement obtained by allowing all admissible policies. When both differences are bounded by a power of the first-order stationarity measure and the effective concentrability coefficient is finite, the policy gradient objective satisfies the Polyak--{\L}ojasiewicz--Kurdyka (P{\L}K) condition. This condition bounds the objective gap in terms of first-order stationarity and, under smoothness, yields non-asymptotic convergence guarantees for projected gradient descent.

A separate result establishes the P{\L}K condition without assuming a finite effective concentrability coefficient. It requires a policy-improvement bound that holds at every state, rather than only after averaging under the current policy's discounted occupancy measure. If the weighted policy-iteration objective attains its minimum over the parameterized policy class for every current policy in that class, the pointwise condition implies the two quantitative conditions described above, but the converse need not hold. A two-state MDP satisfies the conditions of Theorems~\ref{thm: general-mdp-no-spurious} and~\ref{thm: general-mdp-PLK} while violating the pointwise condition, so the pointwise result does not replace the first two theorems. Conversely, another two-state example satisfies the pointwise condition but has an infinite effective concentrability coefficient, so the two routes are complementary.

We apply the pointwise result to inventory systems with Markov-modulated demand under state-dependent base-stock policies and stochastic cash-balance problems under two-sided base-stock policies. For both models, we establish approximate convexity of the $Q$-value functions in the action variable, with deviations controlled by the first-order stationarity measure. These estimates yield the required pointwise bounds and establish the P{\L}K condition with exponent one under the baseline assumptions and exponent two under an additional demand-density lower bound and positive curvature. We also prove that the policy gradient objectives have Lipschitz continuous gradients under the baseline assumptions. Consequently, projected gradient descent using exact policy gradients and a suitable constant step size attains an $\epsilon$-optimal policy in $\co(1/\epsilon)$ iterations in the first case and $\co(\log(1/\epsilon))$ iterations in the second. The application bounds do not require a finite effective concentrability coefficient, although they retain their dependence on the model-specific constants.

\subsection{Highlights of Contributions}\label{subsec: contributions}
Our work contributes to optimization, operations, and reinforcement learning.

\begin{henumerate}
    \item From an \emph{optimization} perspective, convergence guarantees for algorithms often assume benign landscape conditions \citep{attouch2013convergence,bento2025convergence,lewis2025complexity}, whereas establishing these conditions for a given nonconvex problem requires separate analysis. We complement the literature by establishing the P{\L}K condition for the policy gradient objective in the inventory system with Markov-modulated demands and the stochastic cash-balance problem. Together, these results expand the set of concrete nonconvex problems for which we rigorously establish benign landscape conditions. 

    \item From an \emph{operations} perspective, to the best of our knowledge, we provide the first non-asymptotic convergence guarantees for policy gradient methods applied to the infinite-horizon discounted inventory system with Markov-modulated demand and stochastic cash-balance problem. Prior work analyzes these models through dynamic programming and characterizes their optimal policy structures \citep{song1993inventory,eppen1969cash}, but does not provide finite-time convergence guarantees for directly optimizing the policy parameters.

    \item From a \emph{reinforcement learning} perspective, we establish general landscape results for policy gradient objectives in infinite-horizon discounted MDPs with general state and action spaces. Our result rules out suboptimal stationary points of the policy gradient objective under weighted policy-iteration conditions imposed only at its stationary points, rather than for every current policy in the class as required by the exact-closure framework. We further establish the P{\L}K condition when the effective concentrability coefficient is finite and both the within-class policy-improvement gap and the closure error are bounded by a power of the first-order stationarity measure. A complementary policy-improvement bound that holds at every state yields the P{\L}K condition without a concentrability assumption. Under the common standing assumptions, the exact-closure framework's conditions for global optimality and gradient dominance imply our corresponding structural conditions for the same policy class and parameter domain. Our results therefore recover the corresponding earlier guarantees while also applying to the infinite-horizon discounted inventory and cash-balance models studied here. Unlike the finite-horizon analysis of \citet{chen2024landscape}, our stationary policies use a common parameter vector across all periods. Because gradient contributions from different periods may cancel, the finite-horizon landscape guarantees do not automatically extend to this parameterization.
\end{henumerate}

\subsection{Literature Review}\label{subsec: literature} This study relates to three streams of literature: (i) nonconvex landscape conditions, (ii) global convergence of policy gradient methods, and (iii) solution methods for operations models with structured policies. 

\subsubsection*{Nonconvex Landscape Conditions.} A large body of optimization literature studies benign landscape conditions that guarantee global convergence of first-order methods despite nonconvexity. The Polyak--{\L}ojasiewicz (P{\L}) and Polyak--{\L}ojasiewicz--Kurdyka (P{\L}K) conditions relate function values to first-order information and support convergence analyses for gradient, proximal, and more general descent methods \citep{polyak1963gradient,lojasiewicz1963topological,kurdyka1998gradients,attouch2013convergence,karimi2016linear,bento2025convergence,lewis2025complexity}. Much of this literature assumes a landscape condition and studies its algorithmic consequences. 

A complementary line of work establishes those nonconvex landscape conditions for concrete problem classes, including LQR policy optimization \citep{fazel2018global}, finite-horizon policy optimization \citep{chen2024landscape}, and joint arrival- and service-rate control in queueing systems \citep{chen2025hidden}. We contribute to this line by proving that the policy gradient objectives for the infinite-horizon inventory and cash-balance problems considered here satisfy the P{\L}K condition.

\subsubsection*{Global Convergence of Policy Gradient Methods.} For MDPs with finite state and action spaces, global convergence guarantees have been established for policy gradient and natural policy gradient methods \citep{agarwal2021theory,cen2022fast,bhandari2021linear}, as well as policy mirror descent and its variants \citep{lan2023policy,lan2023block,li2024homotopic,li2025policy}. Related guarantees are available for concave utility objectives and constrained formulations based on state-action occupancy measures \citep{zhang2020variational,ying2025policy}, and for special control models such as LQR under linear policies \citep{fazel2018global}. For MDPs with general state and action spaces, \citet{ju2026policy} develop policy mirror descent and policy dual averaging methods without requiring explicit policy parameterization. Our focus is on the optimization landscape over fixed structured policy classes parameterized by finite-dimensional vectors.

Most closely related, \citet{bhandari2024global} provide structural conditions for global optimality and gradient dominance through weighted policy iteration. They also study approximate closure with a fixed error bound and obtain a corresponding bound on the suboptimality of stationary points. Our qualitative result imposes the structural conditions only at stationary points of the policy gradient objective. Our quantitative result controls both policy-improvement differences by the stationarity measure, so both vanish at stationary points. Using the same effective concentrability coefficient permits a direct comparison of the quantitative conditions under the common regularity and parameter-domain assumptions. The pointwise result provides a separate sufficient condition without assuming that this coefficient is finite.

\citet{chen2024landscape} establish P{\L}K conditions and convergence guarantees for finite-horizon MDPs with general state and action spaces, including inventory and cash-balance models. They consider non-stationary policies and use sequential decomposition inequalities in their landscape analysis. In contrast, we study stationary policies with parameters shared across periods, so gradient contributions from different periods may cancel, and the finite-horizon landscape guarantees do not automatically extend to our setting. For the inventory and cash-balance models, we establish approximate convexity of the $Q$-value functions in the action variable, with deviations controlled by the first-order stationarity measure. This yields the pointwise policy-improvement bounds needed to establish the P{\L}K condition for these models.

\subsubsection*{Solution Methods for Operations Models with Structured Policies.} Classical dynamic-programming studies characterize state-dependent base-stock policies for inventory systems with Markov-modulated demand \citep{song1993inventory}, two-sided threshold policies for stochastic cash-balance problems \citep{eppen1969cash,feinberg2007optimality}, and dual-threshold policies for energy-storage management \citep{harsha2015optimal}. These studies use Bellman equations and structural arguments to characterize optimal policies. Our work addresses a different question: whether direct optimization over the corresponding policy parameters has a globally benign landscape and admits finite-time convergence guarantees. 

A related stream develops optimization and learning methods for operations models. For inventory systems, \citet{glasserman1995sensitivity} develop simulation-based derivative estimators for base-stock levels, while \citet{kunnumkal2008using} and \citet{huh2014online} analyze stochastic-approximation and biased-gradient methods for related formulations. Sampling-based approximation schemes have also been developed for capacitated inventory models \citep{cheung2019sampling}. More recently, \citet{li2026convergence} establish convergence and inference results for stream stochastic gradient descent, including a lost-sales inventory model with a long-run average-cost objective. Deep RL has also been applied to complex inventory systems and networks \citep{gijsbrechts2022can,alvo2023deep}. We focus on the optimization landscape of the policy gradient objective for infinite-horizon discounted inventory and cash-balance models and establish iteration complexity for exact policy gradients.

\subsection{Organization}
The remainder of the paper is organized as follows. Section~\ref{sec: problem formulation} introduces the discounted MDP formulation and policy optimization. Section~\ref{sec: PLK condition} defines the P{\L}K condition and gives convergence rates for projected gradient descent. Section~\ref{sec:limitations-global-pi} presents an inventory counterexample to closure under weighted policy improvement. Section~\ref{sec: landscape} develops the landscape results and compares their assumptions. Section~\ref{sec: application} verifies the pointwise condition for the inventory and cash-balance models and derives their convergence guarantees. Appendix~\ref{appendix: landscape} contains the proofs of the general landscape results, and Appendix~\ref{appendix: inventory-core} contains the core inventory analysis. The electronic companion contains the remaining proofs and technical verifications.

\section{Problem Formulation and Preliminaries}\label{sec: problem formulation}
We defer the measurability and integrability assumptions required for a rigorous treatment of general state and action spaces to the end of this section to keep the initial formulation accessible. Consider an infinite-horizon discounted MDP $(\cs, \ca, g, P, \gamma, \rho)$ defined in \citet{puterman2014markov}: the state space $\cs \subseteq \R^m$; the action space $\ca \coloneqq \bigcup_{s\in\cs} \ca_s$, where $\ca_s \subseteq \R^n$ is the nonempty set of feasible actions for state $s \in \cs$; the cost function $g: \cs \times \ca \to \R$, where $g(s, a)$ is the one-period cost after taking action $a$ in state $s$; the transition kernel $P$ specifies the conditional distribution $P(\cdot | s, a)$ of the next state given a feasible state-action pair $(s, a)$; the discount factor $\gamma \in (0, 1)$, and the initial state distribution $\rho$.

Let $\Pi$ denote the set of admissible measurable stationary policies. A policy $\pi\in\Pi$ maps each state $s\in\cs$ to a feasible action $\pi(s)\in\ca_s$. For a policy $\pi$, the value function starting from state $s$ is
\begin{equation*}
    J_\pi(s) \coloneqq \mathbb E_s^\pi \left[ \sum_{t=0}^{\infty}\gamma^t g(s_t,\pi(s_t)) \right].
\end{equation*}
The expectation $\E_s^\pi$ denotes that we take the expectation over a Markovian sequence $(s_0, s_1,\dots)$, where $s_0 = s$ and $s_{t+1} \sim P(\cdot|s_t, \pi(s_t))$. A policy $\pi^*$ is optimal if it minimizes the total expected cost
\begin{equation*}
    l(\pi) \coloneqq (1-\gamma) \int_\cs J_\pi(s)\rho(ds) = (1-\gamma) \mathbb E_{s\sim\rho}^\pi \left[ \sum_{t=0}^{\infty}\gamma^t g(s_t,\pi(s_t)) \right].
\end{equation*}
For every Borel measurable set $\cm\subseteq\cs$, we define the discounted state-occupancy measure $\eta_\pi$ by
\begin{equation*}
    \eta_\pi(\cm) \coloneqq (1-\gamma)\sum_{t=0}^{\infty}\gamma^t \mathbb P_\rho^\pi(s_t\in\cm).
\end{equation*}
Here, $\mathbb P_\rho^\pi(s_t\in\cdot)$ denotes the distribution of the state at time $t$ with initial distribution $\rho$ and policy $\pi$. The discounted state-occupancy measure provides the discounted fraction of time the Markovian system spends in $\cm$. Hence, the total expected cost allows for an equivalent expression:
\begin{equation*}
    l(\pi) = \int_\cs g(s, \pi(s)) \eta_\pi(ds).
\end{equation*}

\subsection{Bellman Equations}
We define several functions and operators used in our main theory. For a measurable function $J$, define the Bellman operators
\begin{equation*}
    \begin{aligned}
        (T_\pi J)(s) &\coloneqq g(s,\pi(s)) + \gamma\int_\cs J(s')P(ds'|s,\pi(s)),\\
        (TJ)(s) &\coloneqq \inf_{a\in\ca_s} \left\{ g(s,a) + \gamma\int_\cs J(s')P(ds'|s,a) \right\}.
    \end{aligned}
\end{equation*}
For any policy $\pi \in \Pi$, define the state-action value function (or the $Q$-value function)
\begin{equation*}
    Q_\pi(s,a) \coloneqq g(s,a) + \gamma\int_\cs J_\pi(s')P(ds'|s,a).
\end{equation*}
By definition, for any policy $\pi, \pi' \in \Pi$, we have
\begin{equation*}
    Q_\pi(s, \pi'(s)) = (T_{\pi'}J_\pi)(s), \qquad \inf_{a\in\ca_s} Q_\pi(s,a) = (TJ_\pi)(s).
\end{equation*}
Additionally, the value function $J_{\pi}$ and $Q$-value function $Q_{\pi}$ satisfy the following Bellman equations:
\begin{equation*}
    \left\{    
    \begin{aligned}
        J_\pi(s) & = Q_{\pi} \left( s, \pi(s) \right),\\
        Q_{\pi}(s, a) & = g(s, a) + \gamma\int_\cs J_\pi(s')P(ds'|s,a).
    \end{aligned}
    \right.
\end{equation*}

\subsection{Policy Gradient Methods}
Policy gradient methods use first-order optimization algorithms to minimize the total expected cost $l(\pi)$. In this paper, we focus on policies parameterized by finite-dimensional vectors $\theta$ \citep{sutton1999policy}. We denote each such policy by $\pi_\theta$, where $\theta$ belongs to a nonempty closed convex set $\Theta\subseteq\mathbb R^d$. By working in the affine hull of $\Theta$ and using orthonormal coordinates if necessary, we may assume without loss of generality that $\Theta$ has nonempty interior. The corresponding parameterized policy class is $\Pi_\Theta=\{\pi_\theta:\theta\in\Theta\}\subseteq\Pi$.

Given a parameterized policy $\pi_\theta$, we overload the notation of the total expected cost by $l(\theta) \coloneqq l(\pi_\theta)$, called the policy gradient objective function. The corresponding policy optimization problem is $\min_{\theta\in\Theta}l(\theta)$. We define a policy $\pi_{\theta}$ to be $\epsilon$-optimal if $\theta$ is an $\epsilon$-optimal solution of $\min_{\theta\in\Theta}l(\theta)$. Let $\theta^*$ denote one of the minimizers of $\min_{\theta\in\Theta} l(\theta)$ and $\pi_{\theta^*}$ as the corresponding policy. Throughout the paper, we assume that the parameterized policy class $\Pi_\Theta$ contains the optimal policy $\pi^*$. This often occurs when the optimal policy class is known, e.g., affine policies in the Linear Quadratic Regulator (LQR) problem and base-stock policies in the inventory control model.

\subsection{Measurability and Integrability Assumptions}
The following assumption collects the standard technical conditions needed to ensure that the controlled process, Bellman expressions, and functions used below are well defined. We state these conditions separately so that the subsequent analysis can focus on the optimization landscape. The measurability assumption is standard in dynamic programming with general Borel state and action spaces; see \citet{blackwell1965discounted,bertsekas1978stochastic}, \citet[Section~3.3]{hernandez2012discrete}, and \citet[Section~6.2.5]{puterman2014markov}. Throughout, we refer to measurability with respect to the relevant Borel $\sigma$-algebras.

\begin{assumption}\label{assumption: general-mdp-technical}
    Assume that both of the following assumptions hold.
    \begin{henumerate}
        \item \label{assumption: general-mdp-measurability}
        \textbf{(Measurability.)} The state space $\cs$ and action space $\ca$ are Borel subsets of Euclidean spaces, and the set of feasible state-action pairs $\Gamma\coloneqq\left\{(s, a)\in\cs\times\ca:a\in\ca_s\right\}$ is measurable. The one-period cost function $g:\Gamma\to\R$ is measurable. The transition kernel $P$ is a stochastic kernel on $\cs$ given $\Gamma$: for every $(s,a)\in\Gamma$, $P(\cdot\mid s,a)$ is a probability measure on $\cs$, and, for every measurable set $\cm\subseteq\cs$, the mapping $(s,a)\mapsto P(\cm\mid s,a)$ is measurable on $\Gamma$. For every $\theta\in\Theta$, the value function $J_{\pi_\theta}$ is real-valued and measurable and satisfies $J_{\pi_\theta} = T_{\pi_\theta}J_{\pi_\theta}$. Moreover, $Q_{\pi_\theta}$ is real-valued and measurable on $\Gamma$, $TJ_{\pi_\theta}$ is real-valued and measurable on $\cs$, and there exists a policy $\pi'\in\Pi$ such that
        \begin{equation*}
            Q_{\pi_\theta}(s,\pi'(s)) = \inf_{a\in\ca_s}Q_{\pi_\theta}(s,a) = (TJ_{\pi_\theta})(s), \qquad s\in\cs.
        \end{equation*}
    
        \item \label{assumption: general-mdp-integrability} 
        \textbf{(Integrability.)} For every $\theta,\theta',\bar\theta\in\Theta$, the functions $J_{\pi_\theta}$, $T_{\pi_{\theta'}}J_{\pi_\theta}$, and $TJ_{\pi_\theta}$ are absolutely integrable with respect to $\eta_{\pi_{\bar\theta}}$. In addition, for every $\theta\in\Theta$ and every $\pi'\in\Pi$, the function $T_{\pi'}J_{\pi_\theta}$ is absolutely integrable with respect to $\eta_{\pi_\theta}$.
    \end{henumerate}
\end{assumption}

We maintain Assumption~\ref{assumption: general-mdp-technical} throughout the general landscape analysis in Section~\ref{sec: landscape} and verify it for each application in Section~\ref{sec: application}.

\section{Nonconvex Landscape Conditions and Convergence Rates}\label{sec: PLK condition}
The policy gradient objective function $l(\theta)$ is generally nonconvex, so first-order methods cannot guarantee convergence to global optimal solutions under smoothness alone. Standard results for nonconvex optimization typically provide only first-order stationarity guarantees. If every stationary point is globally optimal, then convergence to stationary points also implies convergence to a globally optimal policy. However, the absence of suboptimal stationary points does not provide a non-asymptotic convergence rate for the objective gap. Such rates require a quantitative characterization of the optimization landscape. This section introduces the landscape conditions used in our analysis, explains their relationships, and states the convergence of projected gradient descent under these nonconvex conditions.

\subsection{P{\L}K Condition}\label{subsec: PLK definition}
In this work, we use the Polyak-{\L}ojasiewicz-Kurdyka (P{\L}K) condition \citep{polyak1963gradient, lojasiewicz1963topological, kurdyka1998gradients, bento2025convergence}, which is well-suited to the following analysis. 

\begin{definition}[P{\L}K Condition]\label{def: PLK condition}
    Consider a nonempty closed convex set $\cx\subseteq\R^n$. Suppose that $f$ is differentiable on an open set containing $\cx$ and attains its minimum over $\cx$. The function $f$ satisfies the P{\L}K condition on $\cx$ if there exist $\mu>0$ and $\alpha\in[1,2]$ such that
    \begin{equation*}
        f(x)-\min_{y\in\cx}f(y)\le\frac{1}{2\mu}\min_{g\in\partial\delta_\cx(x)}\|\nabla f(x)+g\|_2^\alpha,\qquad \forall x\in\cx.
    \end{equation*}
    Here, $\mu$ denotes the P{\L}K constant and $\alpha$ denotes the P{\L}K exponent. Furthermore, $\partial\delta_\cx(x)$ denotes the convex subdifferential of the indicator function $\delta_\cx$ at $x$, which is the normal cone
    \begin{equation*}
        \partial\delta_\cx(x)=N_\cx(x)\coloneqq\left\{g\in\R^n:\langle g,y-x\rangle\le0,\ \forall y\in\cx\right\}.
    \end{equation*}
\end{definition}

The P{\L}K condition ensures that every stationary point is globally optimal. To see this, if $x$ is a stationary point, i.e., $\langle \nabla f(x), x' - x \rangle \ge 0$ for any $x' \in \cx$, then $-\nabla f(x) \in \partial \delta_\cx(x) = N_\cx(x)$, which implies
\begin{equation*}
    f(x) - \min_{x\in\cx} f(x) \le \frac{1}{2\mu} \min_{g \in \partial \delta_\cx(x)} \left \| \nabla f(x) + g \right \|_2^\alpha \le \frac{1}{2\mu} \left \| \nabla f(x) - \nabla f(x) \right \|_2^\alpha = 0.
\end{equation*}

\subsection{Gradient Dominance and Its Relation to P{\L}K Condition}\label{subsec: gradient dominance}
For comparison with existing policy gradient guarantees, we recall the gradient-dominance condition of \citet[Definition~2]{bhandari2024global}.

\begin{definition}[Gradient Dominance]\label{def: gradient dominance}
    Consider a nonempty closed convex set $\cx\subseteq\R^n$. Suppose that $f$ is differentiable on an open set containing $\cx$ and attains its minimum over $\cx$. For constants $c>0$ and $\mu\ge0$, the function $f$ is $(c,\mu)$-gradient dominated over $\cx$ if
    \begin{equation}\label{eq:BR-gradient-dominance}
        \min_{y\in\cx}f(y)\ge f(x)+\min_{y\in\cx}\left\{c\langle\nabla f(x),y-x\rangle+\frac{\mu}{2}\|y-x\|_2^2\right\},\qquad \forall x\in\cx,
    \end{equation}
    where the minimum on the right-hand side is assumed to be attained for every $x\in\cx$.
\end{definition}

Convex functions on bounded domains are $(1,0)$-gradient dominated, while $\mu$-strongly convex functions are $(1,\mu)$-gradient dominated. The following lemma relates gradient dominance to the P{\L}K condition.

\begin{lemma} \label{lemma: gradient dominance and PLK}
    Consider a nonempty closed convex set $\cx\subseteq\R^n$, and suppose that $f$ is differentiable on an open set containing $\cx$. Suppose further that $f$ is $(c,\mu)$-gradient dominated over $\cx$ for some $c>0$ and $\mu\ge0$.
    \begin{henumerate}
        \item \label{condition: GD to PLK two} \citep{chen2024landscape} If $\mu>0$, then $f$ satisfies the P{\L}K condition with exponent $2$ and constant $\mu/c^2$.

        \item \label{condition: GD to PLK one} If $\mu=0$ and $\cx$ has diameter at most $R$ for some $R>0$, namely, $\|x-y\|_2\le R$ for all $x,y\in\cx$, then $f$ satisfies the P{\L}K condition with exponent $1$ and constant $1/(2Rc)$.
    \end{henumerate}
\end{lemma}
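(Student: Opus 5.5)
Both parts follow the same route: bound the objective gap by the inner minimum in the gradient-dominance inequality \eqref{eq:BR-gradient-dominance}, then relate that minimum to the projected-gradient quantity $\min_{g\in N_\cx(x)}\|\nabla f(x)+g\|_2$. The one identity used throughout is the following. For any $y\in\cx$ and any $g\in N_\cx(x)$ we have $\langle g,y-x\rangle\le 0$. Hence
\begin{equation*}
    \langle\nabla f(x),y-x\rangle\ge\langle\nabla f(x)+g,y-x\rangle\ge-\|\nabla f(x)+g\|_2\,\|y-x\|_2 .
\end{equation*}
Write $f^*=\min_{\cx}f$ and $G(x)=\min_{g\in N_\cx(x)}\|\nabla f(x)+g\|_2$. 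The minimum is attained because $N_\cx(x)$ is a nonempty closed convex cone. Choose $g$ attaining it.

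\textbf{Part (ii), $\mu=0$, diameter at most $R$.} Rearranging \eqref{eq:BR-gradient-dominance} gives
\begin{equation*}
    f(x)-f^*\le-\min_{y\in\cx}c\langle\nabla f(x),y-x\rangle=\max_{y\in\cx}c\langle\nabla f(x),x-y\rangle .
\end{equation*}
By the display above, each term satisfies $c\langle\nabla f(x),x-y\rangle\le c\,G(x)\|y-x\|_2\le cR\,G(x)$. Therefore $f(x)-f^*\le cR\,G(x)=\frac{1}{2\mu'}G(x)^1$ with $\mu'=1/(2Rc)$. This is exactly the P{\L}K condition with exponent $1$ and constant $1/(2Rc)$.

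\textbf{Part (i), $\mu>0$.} This part is attributed to \citet{chen2024landscape}, but I would reprove it directly. Again
\begin{equation*}
    f(x)-f^*\le-\min_{y\in\cx}\Bigl\{c\langle\nabla f(x),y-x\rangle+\tfrac{\mu}{2}\|y-x\|_2^2\Bigr\}.
\end{equation*}
Using the same bound with $g$ chosen optimally, for every $y\in\cx$,
\begin{equation*}
    c\langle\nabla f(x),y-x\rangle+\tfrac{\mu}{2}\|y-x\|_2^2\ge -c\,G(x)\,t+\tfrac{\mu}{2}t^2,\qquad t=\|y-x\|_2 .
\end{equation*}
Minimizing the scalar quadratic over $t\ge0$ gives the lower bound $-c^2G(x)^2/(2\mu)$. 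Hence $f(x)-f^*\le\frac{c^2}{2\mu}G(x)^2$, which is the P{\L}K condition with exponent $2$ and constant $\mu/c^2$.

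The only point needing care is the sign bookkeeping with the normal cone. Replacing $\nabla f(x)$ by $\nabla f(x)+g$ must only decrease the inner objective, and this is guaranteed because $y-x$ lies in the tangent directions of $\cx$. The rest is Cauchy--Schwarz and a one-dimensional quadratic minimization. I expect no real obstacle beyond noting that the relevant minima are attained: the inner minimum by the assumption in Definition~\ref{def: gradient dominance}, and $G(x)$ because $N_\cx(x)$ is closed and convex.
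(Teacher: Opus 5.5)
Your proof is correct and follows essentially the same route as the paper: replace $\nabla f(x)$ by $\nabla f(x)+g$ using the normal-cone inequality, apply Cauchy--Schwarz, and then either complete the square ($\mu>0$) or bound $\|x-y\|_2$ by $R$ ($\mu=0$). The only cosmetic difference is that you fix a minimizing $g\in N_\cx(x)$ at the outset, whereas the paper proves the bound for every $g\in N_\cx(x)$ and minimizes over $g$ at the end.
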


Setting $c=1$ in Lemma~\ref{lemma: gradient dominance and PLK} shows that $\mu$-strongly convex functions satisfy the P{\L}K condition with exponent $2$ and constant $\mu$, while convex functions on domains of diameter at most $R$ satisfy the P{\L}K condition with exponent $1$ and constant $1/(2R)$. The convex case also follows from \citet[Proposition~2(i)]{fatkhullin2025stochastic} by taking the identity transformation.

\subsection{Convergence Rates for Projected Gradient Descent}\label{subsec: PLK convergence}
Under a P{\L}K condition and Lipschitz continuity of the gradient, the objective values generated by projected gradient descent with an appropriate step size converge to the global minimum at a non-asymptotic rate. We record the convergence rates for the two P{\L}K exponents used in our applications. Under the P{\L}K condition with exponent $1$, Lemma~\ref{lemma: convergence rate}.\ref{alg: PGD 1-KL} establishes an $\co(1/\epsilon)$ iteration complexity. The proof adapts the weak gradient-mapping domination analysis of \citet[Theorem~4]{xiao2022convergence} to the normal-cone formulation. When the P{\L}K condition holds with exponent $2$, Lemma~\ref{lemma: convergence rate}.\ref{alg: PGD 2-KL} establishes a linear convergence rate for the objective gap.
\begin{lemma}\label{lemma: convergence rate}
    Consider an optimization problem $\min_{x\in\cx} f(x)$ over a nonempty closed convex set $\cx$. Suppose that $f$ attains its minimum over $\cx$, is differentiable on an open set containing $\cx$, and has an $L$-Lipschitz continuous gradient on $\cx$ for some $L>0$. Let $f^*=\min_{x\in\cx}f(x)$. Given $x_0\in\cx$, consider projected gradient descent $x_{k+1}=\operatorname{Proj}_\cx(x_k-\frac{1}{L}\nabla f(x_k))$, where $\operatorname{Proj}_\cx$ denotes the projection onto $\cx$.
    \begin{henumerate}
        \item \label{alg: PGD 1-KL} If $f$ satisfies the P{\L}K condition with exponent $1$ and constant $\mu$, then,
        \begin{equation*}
            f(x_k)-f^* \le  \frac{2L/\mu^2+ \left[f(x_0)-f^*\right]}{k+1}, \qquad k \ge 0.
        \end{equation*}
        \item \label{alg: PGD 2-KL} \citep{attouch2013convergence, chen2024landscape} If $f$ satisfies the P{\L}K condition with exponent $2$ and constant $\mu$, then,
        \begin{equation*}
            f(x_{k}) - f^* \le \left(1 - \frac{\mu}{4L + \mu} \right)^k \left[ f(x_0) - f^* \right], \qquad k \ge 0.
        \end{equation*}
    \end{henumerate}
\end{lemma}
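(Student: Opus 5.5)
We prove both parts through the gradient mapping $G(x)\coloneqq L\bigl(x-\operatorname{Proj}_\cx(x-\tfrac1L\nabla f(x))\bigr)$, so that $x_{k+1}=x_k-\tfrac1L G(x_k)$. We rely on two standard facts. The first is the sufficient-decrease inequality $f(x_{k+1})\le f(x_k)-\tfrac{1}{2L}\|G(x_k)\|_2^2$, which follows from the descent lemma together with the projection inequality $\langle \nabla f(x_k)-G(x_k),\,y-x_{k+1}\rangle\ge0$ for all $y\in\cx$. The second links the gradient mapping to the normal-cone residual at the next iterate. By the optimality condition of the projection, $G(x_k)-\nabla f(x_k)\in N_\cx(x_{k+1})$. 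Hence the vector $g\coloneqq G(x_k)-\nabla f(x_k)$ is admissible in the P{\L}K minimum at $x_{k+1}$, and
\begin{equation*}
\min_{g'\in N_\cx(x_{k+1})}\|\nabla f(x_{k+1})+g'\|_2\le \|\nabla f(x_{k+1})-\nabla f(x_k)+G(x_k)\|_2\le 2\|G(x_k)\|_2,
\end{equation*}
where the last step uses $L$-Lipschitzness and $\|x_{k+1}-x_k\|_2=\|G(x_k)\|_2/L$. Applying the P{\L}K condition at $x_{k+1}$ then gives, with $\delta_k\coloneqq f(x_k)-f^*$,
\begin{equation*}
\delta_{k+1}\le \tfrac{1}{2\mu}\,2^\alpha\|G(x_k)\|_2^\alpha .
\end{equation*}

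\textbf{Exponent 2.} Here $\delta_{k+1}\le \tfrac{2}{\mu}\|G(x_k)\|_2^2\le \tfrac{4L}{\mu}(\delta_k-\delta_{k+1})$. Rearranging gives $\delta_{k+1}(1+\mu/(4L))\le\delta_k$, that is, $\delta_{k+1}\le \tfrac{4L}{4L+\mu}\delta_k=(1-\tfrac{\mu}{4L+\mu})\delta_k$. Iterating this yields part \ref{alg: PGD 2-KL}.

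\textbf{Exponent 1.} Here $\|G(x_k)\|_2\ge \mu\,\delta_{k+1}$, so the decrease inequality gives
\begin{equation*}
\delta_k-\delta_{k+1}\ge \tfrac{\mu^2}{2L}\,\delta_{k+1}^2 .
\end{equation*}
This is the standard recursion that produces an $\co(1/k)$ rate. Set $c=\mu^2/(2L)$. Since $\delta_{k+1}\le\delta_k$, we have $\tfrac{1}{\delta_{k+1}}-\tfrac{1}{\delta_k}=\tfrac{\delta_k-\delta_{k+1}}{\delta_k\delta_{k+1}}\ge c\,\tfrac{\delta_{k+1}}{\delta_k}$.

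We then split into two cases. If $\delta_{k+1}\ge \delta_k/2$, the increment of $1/\delta$ is at least $c/2$. Otherwise $\delta_{k+1}<\delta_k/2$, and then $\tfrac1{\delta_{k+1}}-\tfrac1{\delta_k}>\tfrac1{\delta_k}$.

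Obtaining exactly the stated constant $\bigl(2L/\mu^2+\delta_0\bigr)/(k+1)$ is the main obstacle, and the case split is delicate. The cleaner route is to prove by induction that $\delta_k\le \tfrac{A}{k+1}$ with $A=\max\{\delta_0,\,2/c\}\le \delta_0+4L/\mu^2$. The base case $k=0$ holds because $A\ge\delta_0$. For the inductive step, the map $t\mapsto t+ct^2$ is increasing, and $\delta_{k+1}+c\delta_{k+1}^2\le\delta_k\le A/(k+1)$. So it suffices to check that $t=A/(k+2)$ satisfies $t+ct^2\ge A/(k+1)$. This reduces to $cA\ge (k+2)/(k+1)$, which holds whenever $cA\ge2$.

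This argument gives the bound with $4L/\mu^2$ in place of $2L/\mu^2$. To reach the stated constant, we would sharpen the inductive check using $(k+2)/(k+1)\le 2$ only at $k=0$ and handle $k=0$ directly, since $\delta_1\le\delta_0$. Failing that, we would follow \citet[Theorem~4]{xiao2022convergence}, whose argument we are adapting, to match their constant precisely.
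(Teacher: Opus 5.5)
Your setup matches the paper's proof exactly, and each step through the exponent-one recursion is correct. That includes the sufficient-decrease inequality, the inclusion $G(x_k)-\nabla f(x_k)\in N_\cx(x_{k+1})$ giving the residual bound $2\|G(x_k)\|_2=2L\|x_{k+1}-x_k\|_2$, the exponent-two contraction, and the recursion $\delta_k-\delta_{k+1}\ge c\,\delta_{k+1}^2$ with $c=\mu^2/(2L)$.

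The gap is the last step of part (i). Your induction proves $\delta_k\le\max\{\delta_0,4L/\mu^2\}/(k+1)$. This does \emph{not} imply the stated bound when $\delta_0<2L/\mu^2$, since then $4L/\mu^2>\delta_0+2L/\mu^2$. The patch you sketch does not close it either. With $A=\delta_0+1/c$, the inductive check $cA\ge(k+2)/(k+1)$ is equivalent to $(k+1)c\delta_0\ge 1$. That check fails whenever $(k+1)c\delta_0<1$, i.e., for all small $k$ when $\delta_0$ is small, not only at $k=0$. The loss comes from your lower bound $c\,\delta_{k+1}/\delta_k$ on the increment of $1/\delta$, which degrades exactly when $\delta_{k+1}\ll\delta_k$.

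The paper's fix is to invert the recursion rather than keep $\delta_k$ in the denominator. From $\delta_k\ge\delta_{k+1}(1+c\delta_{k+1})$ and monotonicity $\delta_{k+1}\le\delta_0$,
\[
\frac{1}{\delta_{k+1}}-\frac{1}{\delta_k}\ge\frac{1}{\delta_{k+1}}-\frac{1}{\delta_{k+1}(1+c\delta_{k+1})}=\frac{c}{1+c\delta_{k+1}}\ge\frac{c}{1+c\delta_0}.
\]
Telescope this and use $1/\delta_0\ge c/(1+c\delta_0)$ to get $1/\delta_k\ge (k+1)c/(1+c\delta_0)$. This is exactly $\delta_k\le(2L/\mu^2+\delta_0)/(k+1)$; the case $\delta_j=0$ for some $j$ is trivial.

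Alternatively, you can rescue your induction with $A=\delta_0+1/c$ by splitting at \emph{every} $k$. If $(k+1)c\delta_0\le 1$, then $A/(k+2)\ge\delta_0\ge\delta_{k+1}$ directly. Otherwise $cA=1+c\delta_0>(k+2)/(k+1)$, and your monotone-map check $t\mapsto t+ct^2$ goes through.
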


\section{Limitations of Closure under Weighted Policy Improvement} \label{sec:limitations-global-pi}
Although gradient dominance, the P{\L}K condition, and several other benign nonconvex conditions guarantee non-asymptotic convergence rates for first-order methods under smoothness assumptions, verifying these conditions for the policy gradient objective remains difficult because of the complexity of MDPs. The exact-closure framework reviewed in Section~\ref{subsec: literature} identifies sufficient conditions under which the policy gradient objective satisfies a gradient dominance condition. We next examine its closure requirement.

Whenever the following integral is finite, define the weighted policy-iteration objective by
\begin{equation*}
    \cb(\pi'\mid\eta,J_\pi) \coloneqq \int_\cs (T_{\pi'}J_\pi)(s) \eta(ds) = \int_\cs Q_\pi(s,\pi'(s)) \eta(ds),
\end{equation*}
where $\eta$ is a probability measure over $\cs$. When $\cs$ is discrete and $\eta(\{s\})>0$ for every $s\in\cs$, a classical policy-iteration update can be equivalently written as
\begin{equation*}
    \pi_{k+1} \in \argmin_{\pi\in\Pi} \cb( \pi \mid \eta, J_{\pi_k} ).
\end{equation*}

We recall the following conditions from \citet[Conditions~1, 2A, and~2B]{bhandari2024global}, retaining their names and numbering and stating them in our notation. Here, PI denotes policy iteration, and stationary points are understood relative to the feasible parameter set $\Theta$.
\begin{hitemize}
    \item \textbf{Condition 1 (Closure Under Policy Improvement).} For every $\theta\in\Theta$, there exists $\theta^+\in\Theta$ such that
    \begin{equation}\label{eq:BR-global-closure}
        \cb(\pi_{\theta^+}\mid\eta_{\pi_\theta},J_{\pi_\theta})=\min_{\pi'\in\Pi}\cb(\pi'\mid\eta_{\pi_\theta},J_{\pi_\theta}).
    \end{equation}

    \item \textbf{Condition 2A (Stationary Points of the Weighted PI Objective).} For every $\theta\in\Theta$, every stationary point of the function $\theta'\mapsto\cb(\pi_{\theta'}\mid\eta_{\pi_\theta},J_{\pi_\theta})$ is a global minimizer over $\Theta$.

    \item \textbf{Condition 2B (Gradient Dominance of the Weighted PI Objective).} There exist constants $c>0$ and $\mu\ge0$ such that, for every $\theta\in\Theta$, the function $\theta'\mapsto\cb(\pi_{\theta'}\mid\eta_{\pi_\theta},J_{\pi_\theta})$ is $(c,\mu)$-gradient dominated over $\Theta$.
\end{hitemize}

Condition~1 requires that the weighted policy-improvement problem admit an optimal solution within the parameterized class. Conditions~2A and~2B describe the landscape of this problem, with Condition~2B strengthening Condition~2A. Under the corresponding regularity assumptions, Conditions~1 and~2A ensure that the policy gradient objective has no suboptimal stationary points; Conditions~1 and~2B additionally yield gradient dominance when the effective concentrability coefficient is finite. We next show that Condition~1 can fail for an inventory model even though the base-stock policy class contains an optimal policy.

\subsection{Inventory Model} \label{subsec: single-state-inventory}

We consider an infinite-horizon discounted inventory model with discount factor $\gamma\in(0,1)$. The state $x_t$ is the inventory level before ordering, and the action $y_t$ is the post-order inventory level. Let $B>0$ denote the warehouse capacity. The state and action spaces are $\cs=\ca=(-\infty,B]$, and the feasible action set at state $x\in\cs$ is $\ca_x=[x,B]$. Let $\{D_t\}_{t\ge0}$ be an i.i.d.\ nonnegative demand process with distribution $F_D$ and finite mean. After choosing $y_t\in\ca_{x_t}$, the next inventory level is $x_{t+1}=y_t-D_t$. For unit holding cost $h\ge0$ and unit backlogging cost $b\ge0$, the one-period expected cost given the post-order inventory level is
\begin{equation*}
    C(y) \coloneqq \mathbb E \left[ h(y-D)^+ + b(D-y)^+ \right].
\end{equation*}
Hence, the one-period cost is $g(x,y)=C(y)$ for $y\in\ca_x$. We use $\rho$ to denote the initial distribution of $x_0$, and let $\eta_\pi$ denote the discounted state-occupancy measure induced by $\pi$ and $\rho$.

For a stationary policy $\pi \in \Pi$, feasibility requires $x \le \pi(x) \le B$ for $x \in \cs$. The value function from Section~\ref{sec: problem formulation} takes the form
\begin{equation*}
    J_\pi(x)=\mathbb E_x^\pi \left[\sum_{t=0}^{\infty}\gamma^t C(y_t)\right], \quad y_t = \pi(x_t), \quad x_{t+1} = y_t - D_t, \quad x_0 = x.
\end{equation*}
The goal is to identify an ordering policy that minimizes $J_\pi(x)$ for all $x \in \cs$. We focus on the stationary base-stock policy class $\pi_\theta(x)=x\vee\theta$ with $\theta \in \Theta = [0, B]$, which contains an optimal stationary policy \citep{bertsekas1995dynamic}. Then, the policy gradient objective function is
\begin{equation*}
    l(\theta) = l(\pi_\theta) = (1-\gamma) \int_\cs J_{\pi_\theta}(x) \rho(dx) = (1-\gamma) \mathbb E_{x_0\sim\rho}^{\pi_\theta} \left[ \sum_{t=0}^{\infty}\gamma^t C(x_t \vee \theta) \right].
\end{equation*}

\subsection{Counterexample to Closure under Policy Improvement} \label{subsec: global-closure-counterexample}
We use the inventory model in Section~\ref{subsec: single-state-inventory} to show that the closure under policy improvement \eqref{eq:BR-global-closure} can fail for the stationary base-stock policy class. Let $\gamma=1/2$, $h=1/3$, $b=1$, and $B = 4$. Let $D\sim\operatorname{Unif}[2,3]$, and let the initial inventory level satisfy $x_0\sim\operatorname{Unif}[-1,4]$, independent of the demand process. Consider the base-stock policy $\pi(x)=x\vee1$. For every possible post-order inventory level $y\le4$, define
\begin{equation*}
    G(y) \coloneqq C(y) + \frac{1}{2} \mathbb E \left[ J_\pi(y-D) \right].
\end{equation*}
The Bellman equation gives $J_\pi(x) = G(x\vee1)$. Therefore,
\begin{equation*}
    G(y) = C(y) + \frac{1}{2} \int_{y-3}^{y-2} G(u\vee1) \,du, \qquad y\le4.
\end{equation*}
Then, we have
\begin{equation*}
    \cb(\pi_{\theta'}\mid\eta_\pi,J_\pi)=\int_{\cs}G(x\vee\theta')\,\eta_\pi(dx), \quad \text{and} \quad \cb(\pi'\mid\eta_\pi,J_\pi) = \int_{\cs}G(\pi'(x))\,\eta_\pi(dx).
\end{equation*}

For every measurable set $\cm\subseteq\cs$, the definition of the discounted state-occupancy measure gives $\eta_\pi(\cm) \ge (1-\gamma)\rho(\cm)$. Since $\rho$ has a strictly positive density on $[-1,4]$, every nondegenerate subinterval of $[-1,4]$ has positive $\eta_\pi$-measure. Moreover, because a measurable policy attaining the pointwise Bellman minimum exists, a policy $\pi'\in\Pi$ minimizes the unrestricted weighted policy-iteration objective only if $G(\pi'(x)) = \min_{y\in[x,4]} G(y)$ for $\eta_\pi$-almost every $x\in\cs$.

First, $G(1)=\mathbb E[D-1]+G(1)/2$. Therefore, $G(1) = 3$ and $G(y)=4-y$ for $y\le2$. For $2\le y\le3$, $y-D\le1$ almost surely, so
\begin{equation*} 
    G(y)=\frac{1}{6}(y-2)^2+\frac{1}{2}(3-y)^2+\frac{3}{2}=\frac{13}{8}+\frac{2}{3}\left(y-\frac{11}{4}\right)^2. 
\end{equation*}
Thus, $G$ is uniquely minimized over $[2,3]$ at $y_0=11/4$, with $G(y_0)=13/8$. Moreover, $G(y)\ge2>13/8$ for $y\le2$. For $3\le y\le4$, we have $y-D\in[0,2]$. Using $G(1)=3$ and $G(u)=4-u$ for $1\le u\le2$,
\begin{equation*}
    G(y) = \frac{1}{3}\left(y-\frac{5}{2}\right)+\frac{1}{2}\left\{\int_{y-3}^{1}G(1)\,du+\int_1^{y-2}(4-u)\,du\right\} =-\frac{1}{4}y^2+\frac{11}{6}y-\frac{19}{12}.
\end{equation*}
Since this expression is concave on $[3,4]$, its minimum on that interval is $\min\{G(3),G(4)\}=5/3>13/8$. Combining the above cases, we obtain $G(y) \ge 13/8$ for $y\le4$ with equality if and only if $y = 11/4$. Therefore, $11/4$ is the unique minimizer of $G$ over the set of possible post-order inventory levels $\ca=(-\infty,4]$.

We now argue by contradiction. Suppose that a base-stock policy $\pi_{\theta'}(x)=x\vee\theta'$ minimizes the weighted policy-iteration objective. For every $x\in(1,11/4)$, the unique minimizer of $G$ over $[x,4]$ is $11/4$. If $\theta'\ne11/4$, then $x\vee\theta'\ne11/4$ throughout this interval, which has positive $\eta_\pi$-measure. Hence, weighted optimality forces $\theta'=11/4$. However, the expression for $G$ on $[3,4]$ implies
\begin{equation*}
    G(y)-G(4)=\frac{(4-y)(3y-10)}{12}, \qquad 3\le y\le4.
\end{equation*}
Therefore, $G(4)<G(x)$ for every $x\in(10/3,4)$. On this interval, the candidate base-stock policy with $\theta'=11/4$ selects $y=x$, whereas the feasible action $y=4$ yields a strictly smaller value of $G$. Since this interval has positive $\eta_\pi$-measure, the candidate cannot minimize the weighted policy-iteration objective. Hence, closure under policy improvement (Condition~1) fails.

As we see later, this instance satisfies the first three parts of Assumption~\ref{assumption: inventory}, so Theorem~\ref{thm: markov-demand-PLK}\ref{markov-demand-no-suboptimal-stationary} establishes an exponent-one PŁK condition despite the failure of Condition 1.

\section{Optimization Landscape}\label{sec: landscape}
The inventory counterexample in Section~\ref{sec:limitations-global-pi} shows that Closure Under Policy Improvement (Condition~1) can be restrictive. Motivated by this observation, we develop new sufficient conditions for establishing benign nonconvexity of the policy gradient objective. These conditions cover the inventory and cash-balance models considered later. We start with the following regularity conditions.

\begin{assumption} \label{assumption: general-mdp-regularity}
The following regularity conditions hold.
    \begin{henumerate}
        \item \label{assumption: optimal policy} The parameterized policy class $\Pi_\Theta$ contains the optimal policy $\pi^*$ with corresponding parameter $\theta^*$.

        \item \label{assumption: differentiablity} \textbf{(Differentiability.)} For each $\theta\in\Theta$, the mapping $(\theta',\bar\theta) \mapsto \cb( \pi_{\theta'} \mid \eta_{\pi_{\bar\theta}}, J_{\pi_\theta})$ is well-defined on $\Theta\times\Theta$ and admits a continuously differentiable extension to an open set containing $\Theta\times\Theta$.
    \end{henumerate}
\end{assumption}

Assumption~\ref{assumption: general-mdp-regularity}.\ref{assumption: optimal policy} is standard in structured MDPs: it requires the parameterized policy class to contain an optimal policy. For instance, affine policies are optimal for the LQR problem, and base-stock policies are optimal for the inventory model. Second, Assumption~\ref{assumption: general-mdp-regularity}.\ref{assumption: differentiablity} guarantees that the policy gradient objective $l(\theta)$ is differentiable. Under Assumption~\ref{assumption: general-mdp-technical} and Assumption~\ref{assumption: general-mdp-regularity}.\ref{assumption: differentiablity}, the proof of the Policy Gradient Theorem in \citet[Lemma~6]{bhandari2024global} applies directly. Consequently, $l$ is continuously differentiable and
\begin{equation} \label{eq: general-mdp-policy-gradient}
    \nabla l(\theta) = \left. \nabla_{\theta'} \cb(\pi_{\theta'} \mid \eta_{\pi_\theta}, J_{\pi_\theta}) \right|_{\theta'=\theta}.
\end{equation}

We use the following notion of stationarity. A point $\theta\in\Theta$ is a stationary point of a differentiable function $f:\Theta\to\mathbb R$ over $\Theta$ if
\begin{equation*}
    \langle \nabla f(\theta),\theta'-\theta\rangle\ge 0, \qquad \forall \theta'\in\Theta.
\end{equation*}
For the policy gradient objective $l$, define the first-order stationarity measure
\begin{equation*}
    \mathcal R(\theta) \coloneqq \min_{g\in N_\Theta(\theta)} \|\nabla l(\theta)+g\|_2.
\end{equation*}
Then $\mathcal R(\theta)=0$ if and only if $\theta$ is a stationary point of $l$ over $\Theta$.

\subsection{No Spurious Stationary Points}
Our first result provides new conditions under which the policy gradient objective $l$ has no suboptimal stationary points over $\Theta$. Conditions~1 and~2A impose Closure Under Policy Improvement and Stationary Points of the Weighted PI Objective, respectively, for every current policy in the parameterized class. The following theorem requires these properties only when the current policy parameter $\theta$ is a stationary point of $l$ over $\Theta$.

\begin{theorem}[No Suboptimal Stationary Points] \label{thm: general-mdp-no-spurious}
    Suppose that Assumption~\ref{assumption: general-mdp-regularity} holds and $\eta_{\pi_{\theta^*}}$ is absolutely continuous with respect to $\rho$. For every stationary point $\theta$ of $l$ over $\Theta$, if the following two conditions hold
    \begin{henumerate}
        \item \label{condition: pointwise-closure-pi} The parameterized policy class is closed under weighted policy improvement at $\theta$: there exists $\theta^+\in\Theta$ such that
        \begin{equation*}
            \cb(\pi_{\theta^+}\mid\eta_{\pi_\theta},J_{\pi_\theta})=\min_{\pi'\in\Pi}\cb(\pi'\mid\eta_{\pi_\theta},J_{\pi_\theta}).
        \end{equation*}
        
        \item \label{condition: pointwise-no-spurious-pi} 
        The weighted policy-iteration objective $\theta' \mapsto \cb(\pi_{\theta'} \mid \eta_{\pi_\theta}, J_{\pi_\theta})$ has no suboptimal stationary points over $\Theta$. That is, if $\bar\theta\in\Theta$ is a stationary point of $\theta' \mapsto \cb(\pi_{\theta'} \mid \eta_{\pi_\theta}, J_{\pi_\theta})$, then
        \begin{equation*}
            \cb(\pi_{\bar\theta} \mid \eta_{\pi_\theta}, J_{\pi_\theta}) = \min_{\theta'\in\Theta} \cb(\pi_{\theta'} \mid \eta_{\pi_\theta}, J_{\pi_\theta}).
        \end{equation*}
        
    \end{henumerate}
    Then, $\theta$ is globally optimal over $\Theta$:
    \begin{equation*}
        l(\theta)\le l(\theta')\qquad \forall \theta'\in\Theta.
    \end{equation*}
\end{theorem}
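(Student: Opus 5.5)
Fix a stationary point $\theta$ of $l$ over $\Theta$. The plan has three steps: use the policy gradient identity to turn stationarity of $l$ into stationarity of the weighted policy-iteration objective, then use conditions (a) and (b) to show that $\pi_\theta$ is a Bellman-greedy policy $\eta_{\pi_\theta}$-almost everywhere, and finally lift that to global optimality with the performance difference lemma and absolute continuity.

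\textbf{Step 1 (stationarity transfers).} By the policy gradient formula \eqref{eq: general-mdp-policy-gradient}, $\nabla l(\theta)$ equals the gradient of $\theta'\mapsto\cb(\pi_{\theta'}\mid\eta_{\pi_\theta},J_{\pi_\theta})$ at $\theta'=\theta$. Hence $\theta$ is also a stationary point of this weighted policy-iteration objective over $\Theta$. Condition (b) then gives $\cb(\pi_\theta\mid\eta_{\pi_\theta},J_{\pi_\theta})=\min_{\theta'\in\Theta}\cb(\pi_{\theta'}\mid\eta_{\pi_\theta},J_{\pi_\theta})$.

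\textbf{Step 2 (greedy a.e.).} Condition (a) provides $\theta^+\in\Theta$ whose value equals the unrestricted minimum over $\Pi$. Chaining with Step 1,
\begin{equation*}
\int_\cs Q_{\pi_\theta}(s,\pi_\theta(s))\,\eta_{\pi_\theta}(ds)=\min_{\pi'\in\Pi}\cb(\pi'\mid\eta_{\pi_\theta},J_{\pi_\theta})=\int_\cs (TJ_{\pi_\theta})(s)\,\eta_{\pi_\theta}(ds).
\end{equation*}
The last equality uses the measurable greedy selector from Assumption~\ref{assumption: general-mdp-technical}. The integrand $Q_{\pi_\theta}(s,\pi_\theta(s))-(TJ_{\pi_\theta})(s)$ is nonnegative and integrable by Assumption~\ref{assumption: general-mdp-technical}\ref{assumption: general-mdp-integrability}, and it integrates to zero. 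Since $Q_{\pi_\theta}(s,\pi_\theta(s))=J_{\pi_\theta}(s)$, we conclude that $J_{\pi_\theta}(s)=(TJ_{\pi_\theta})(s)$ for $\eta_{\pi_\theta}$-a.e.\ $s$. Because $\eta_{\pi_\theta}\ge(1-\gamma)\rho$, this also holds $\rho$-a.e.

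\textbf{Step 3 (global optimality, the main obstacle).} Step 2 only gives the Bellman identity on a set of full $\eta_{\pi_\theta}$-measure, while global optimality requires comparing along the trajectory of $\pi^*=\pi_{\theta^*}$. I plan to use the performance difference identity
\begin{equation*}
l(\theta^*)-l(\theta)=\int_\cs\big[Q_{\pi_\theta}(s,\pi^*(s))-J_{\pi_\theta}(s)\big]\,\eta_{\pi_{\theta^*}}(ds).
\end{equation*}
This follows by telescoping $J_{\pi_\theta}$ along the $\pi^*$ chain, using $J_{\pi_\theta}=T_{\pi_\theta}J_{\pi_\theta}$ and the integrability assumptions.

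The integrand is at least $(TJ_{\pi_\theta})(s)-J_{\pi_\theta}(s)$. That quantity is zero $\rho$-a.e., hence $\eta_{\pi_{\theta^*}}$-a.e.\ because $\eta_{\pi_{\theta^*}}\ll\rho$. This yields $l(\theta^*)-l(\theta)\ge0$. Since $\theta^*$ is optimal over all of $\Pi\supseteq\Pi_\Theta$, we get $l(\theta)=l(\theta^*)\le l(\theta')$ for all $\theta'\in\Theta$.

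The delicate points are justifying the performance difference identity rigorously under only the stated integrability; I would cite or adapt the argument behind Lemma~6 of \citet{bhandari2024global}. The other is making sure the null-set transfer uses exactly the absolute continuity hypothesis.
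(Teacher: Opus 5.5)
Your proposal is correct and follows essentially the same route as the paper's proof. You transfer stationarity to the weighted policy-iteration objective via \eqref{eq: general-mdp-policy-gradient}, chain conditions (b) and (a) to get a zero $\eta_{\pi_\theta}$-integrated Bellman residual, pass to $\rho$-a.e.\ using $\eta_{\pi_\theta}\succeq(1-\gamma)\rho$ and then to $\eta_{\pi_{\theta^*}}$-a.e.\ by absolute continuity, and conclude with the performance difference lemma \eqref{eq: general-mdp-performance-difference}, which the paper also takes as given under Assumption~\ref{assumption: general-mdp-technical}.
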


\subsection{P{\L}K Condition}
Next, we establish the P{\L}K condition for policy optimization by providing alternatives to Conditions~1 and~2B. To relate weighted policy improvement to the infinite-horizon objective gap, we use the effective concentrability coefficient of \citet{bhandari2024global}, defined below.

\begin{definition}[Effective Concentrability Coefficient]\label{def: effective-concentrability}
    Define $\kappa_\rho$ for the policy class $\Pi_\Theta$ and initial distribution $\rho$ as the smallest nonnegative constant satisfying
    \begin{equation}\label{ineq: effective-concentrability}
        \int_\cs\left[J_{\pi_\theta}(s)-J_{\pi_{\theta^*}}(s)\right]\rho(ds)\le\frac{\kappa_\rho}{1-\gamma}\int_\cs\left[J_{\pi_\theta}(s)-(TJ_{\pi_\theta})(s)\right]\rho(ds),\qquad \forall\theta\in\Theta.
    \end{equation}
    If no finite constant satisfies this inequality, set $\kappa_\rho=\infty$.
\end{definition}
A finite $\kappa_\rho$ ensures that the improvement from optimizing the current action and following $\pi_\theta$ thereafter, averaged under $\rho$, controls the infinite-horizon objective gap uniformly over $\Pi_\Theta$. This relation allows bounds on weighted policy improvement to yield a P{\L}K condition for $l$.

\begin{theorem}[P\L{}K Condition]\label{thm: general-mdp-PLK}
    Suppose Assumption~\ref{assumption: general-mdp-regularity} holds and $\kappa_\rho<\infty$. Assume that, for every $\theta\in\Theta$, the weighted policy-iteration objective $\theta'\mapsto\cb(\pi_{\theta'}\mid\eta_{\pi_\theta},J_{\pi_\theta})$ attains its minimum over $\Theta$. Assume further that there exist constants $C_{\rm PI},C_{\rm cl}\ge0$ and $\alpha\in[1,2]$ such that the following conditions hold for every $\theta\in\Theta$.
    \begin{henumerate}
        \item The parameterized policy class satisfies the approximate closure condition:
        \begin{equation}\label{condition: general-mdp-approx-closure-pi}
            \min_{\theta'\in\Theta}\cb(\pi_{\theta'}\mid\eta_{\pi_\theta},J_{\pi_\theta})-\min_{\pi'\in\Pi}\cb(\pi'\mid\eta_{\pi_\theta},J_{\pi_\theta})\le C_{\rm cl}\mathcal R(\theta)^\alpha.
        \end{equation}

        \item The weighted policy-iteration objective satisfies:
        \begin{equation}\label{condition: general-mdp-approx-no-spurious-pi}
            \cb(\pi_\theta\mid\eta_{\pi_\theta},J_{\pi_\theta})-\min_{\theta'\in\Theta}\cb(\pi_{\theta'}\mid\eta_{\pi_\theta},J_{\pi_\theta})\le C_{\rm PI}\mathcal R(\theta)^\alpha.
        \end{equation}
    \end{henumerate}
    Then $l$ satisfies the P{\L}K condition on $\Theta$ with exponent $\alpha$:
    \begin{equation*}
        l(\theta)-l(\theta^*)\le\frac{\kappa_\rho(C_{\rm PI}+C_{\rm cl})}{1-\gamma}\mathcal R(\theta)^\alpha, \qquad \forall\theta\in\Theta.
    \end{equation*}
\end{theorem}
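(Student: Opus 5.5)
The plan is to chain three inequalities: the concentrability bound \eqref{ineq: effective-concentrability}, an identity expressing the averaged one-step Bellman gap as a weighted policy-iteration gap, and the two assumed bounds \eqref{condition: general-mdp-approx-closure-pi}--\eqref{condition: general-mdp-approx-no-spurious-pi}.

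Fix $\theta\in\Theta$. First, rewrite the objective gap. By definition, $l(\theta)-l(\theta^*)=(1-\gamma)\int_\cs [J_{\pi_\theta}(s)-J_{\pi_{\theta^*}}(s)]\rho(ds)$. Definition~\ref{def: effective-concentrability} then gives
\begin{equation*}
l(\theta)-l(\theta^*)\le \kappa_\rho\int_\cs\left[J_{\pi_\theta}(s)-(TJ_{\pi_\theta})(s)\right]\rho(ds).
\end{equation*}
Integrability holds by Assumption~\ref{assumption: general-mdp-technical}.

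Second, bound the $\rho$-average of the nonnegative function $J_{\pi_\theta}-TJ_{\pi_\theta}$ by its $\eta_{\pi_\theta}$-average. Since $\eta_{\pi_\theta}\ge(1-\gamma)\rho$ setwise (the $t=0$ term), we have
\begin{equation*}
\int [J_{\pi_\theta}-TJ_{\pi_\theta}]\,d\rho\le \frac{1}{1-\gamma}\int [J_{\pi_\theta}-TJ_{\pi_\theta}]\,d\eta_{\pi_\theta}.
\end{equation*}
The right-hand integral equals $\cb(\pi_\theta\mid\eta_{\pi_\theta},J_{\pi_\theta})-\min_{\pi'\in\Pi}\cb(\pi'\mid\eta_{\pi_\theta},J_{\pi_\theta})$. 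This identity uses two facts: $J_{\pi_\theta}=T_{\pi_\theta}J_{\pi_\theta}$, and Assumption~\ref{assumption: general-mdp-technical} supplies a measurable pointwise minimizer $\pi'$ with $T_{\pi'}J_{\pi_\theta}=TJ_{\pi_\theta}$. So the unrestricted minimum of $\cb$ is attained and equals $\int TJ_{\pi_\theta}\,d\eta_{\pi_\theta}$, and no other policy can do better pointwise.

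Third, split this gap by inserting $\min_{\theta'\in\Theta}\cb(\pi_{\theta'}\mid\eta_{\pi_\theta},J_{\pi_\theta})$, which is attained by hypothesis. The first piece is bounded by $C_{\rm PI}\mathcal R(\theta)^\alpha$ via \eqref{condition: general-mdp-approx-no-spurious-pi}. The second is bounded by $C_{\rm cl}\mathcal R(\theta)^\alpha$ via \eqref{condition: general-mdp-approx-closure-pi}. Combining the three steps gives $l(\theta)-l(\theta^*)\le \kappa_\rho(C_{\rm PI}+C_{\rm cl})\mathcal R(\theta)^\alpha/(1-\gamma)$.

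The statement is written in $\mathcal R$ rather than in the P{\L}K form of Definition~\ref{def: PLK condition}. To match, one uses $\mathcal R(\theta)^\alpha=\min_{g\in N_\Theta(\theta)}\|\nabla l(\theta)+g\|_2^\alpha$ and reads off the constant $\mu=(1-\gamma)/(2\kappa_\rho(C_{\rm PI}+C_{\rm cl}))$. The case where this constant is zero is trivial.

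The main obstacle is the second step: the change of measure from $\rho$ to $\eta_{\pi_\theta}$, and its $1/(1-\gamma)$ factor. If instead $\kappa_\rho$ is meant to absorb that factor, the constant would differ. I would first check that $\eta\ge(1-\gamma)\rho$ combined with $\kappa_\rho/(1-\gamma)$ reproduces exactly the stated constant $\kappa_\rho/(1-\gamma)$. The outer $(1-\gamma)$ in $l$ cancels one factor, so the stated constant should come out. The measurability subtleties for the pointwise minimizer are already covered by Assumption~\ref{assumption: general-mdp-technical}.
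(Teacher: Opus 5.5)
Your proposal is correct and follows the paper's proof essentially step for step: you multiply the concentrability inequality by $1-\gamma$, change measure via $\eta_{\pi_\theta}\succeq(1-\gamma)\rho$ on the nonnegative residual $J_{\pi_\theta}-TJ_{\pi_\theta}$, identify its $\eta_{\pi_\theta}$-integral with the weighted policy-iteration gap using the measurable pointwise minimizer, and split that gap through the attained $\min_{\theta'\in\Theta}\cb(\pi_{\theta'}\mid\eta_{\pi_\theta},J_{\pi_\theta})$. Your constant check is also right, since $\kappa_\rho$ together with the $1/(1-\gamma)$ from the change of measure gives exactly $\kappa_\rho(C_{\rm PI}+C_{\rm cl})/(1-\gamma)$.
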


A sufficient condition for $\kappa_\rho<\infty$ is a finite \textit{distribution-mismatch coefficient} $\left\| d\eta_{\pi_{\theta^*}} / d\rho \right\|_\infty$. Specifically, under the standing assumptions, if $\eta_{\pi_{\theta^*}}$ is absolutely continuous with respect to $\rho$, then
\begin{equation}\label{ineq: effective-concentrability-density-ratio}
    \kappa_\rho\le\left\|\frac{d\eta_{\pi_{\theta^*}}}{d\rho}\right\|_\infty.
\end{equation}
This bound follows from the performance difference lemma; see also \citet[Theorem~4(b)]{bhandari2024global}. The \emph{distribution-mismatch coefficient} measures how well the initial distribution covers the states visited by the optimal policy. In contrast, $\kappa_\rho$ only needs to satisfy~\eqref{ineq: effective-concentrability} for the value functions generated by $\Pi_\Theta$. Consequently, $\kappa_\rho$ can be finite even when the distribution-mismatch coefficient is infinite. When the latter is finite, substituting~\eqref{ineq: effective-concentrability-density-ratio} into Theorem~\ref{thm: general-mdp-PLK} gives
\begin{equation*}
    l(\theta)-l(\theta^*)\le\frac{C_{\rm PI}+C_{\rm cl}}{1-\gamma}\left\|\frac{d\eta_{\pi_{\theta^*}}}{d\rho}\right\|_\infty\mathcal R(\theta)^\alpha, \qquad \forall\theta\in\Theta.
\end{equation*}

Under Lipschitz continuity of $\nabla l$, Theorem~\ref{thm: general-mdp-PLK} and Lemma~\ref{lemma: convergence rate} yield an $\co(1/\epsilon)$ iteration complexity for projected gradient descent using exact policy gradients when $\alpha=1$, and linear convergence when $\alpha=2$. For comparison, \citet[Lemma~3]{bhandari2024global} state an $\co(1/\epsilon^2)$ iteration bound under $(c,0)$-gradient dominance on a bounded domain. Combining Lemmas~\ref{lemma: gradient dominance and PLK} and~\ref{lemma: convergence rate} yields the sharper $\co(1/\epsilon)$ bound under the same assumptions. This improvement follows from a sharper convergence analysis of projected gradient descent. For $\alpha=2$, our result also yields linear convergence for the objective gap. Their Lemma~3 states the linear rate for an unconstrained domain, with constrained extensions discussed separately.

We compare the conditions of Theorem~\ref{thm: general-mdp-PLK} with Conditions~1 and~2B under the common standing assumptions and for the same policy class and parameter domain.
\begin{hitemize}
    \item Condition~1 (Closure Under Policy Improvement) is the special case of \eqref{condition: general-mdp-approx-closure-pi} with $C_{\rm cl}=0$. Our condition allows a positive closure error away from stationary points of $l$ over $\Theta$, provided that this error is bounded by $C_{\rm cl}\mathcal R(\theta)^\alpha$. At stationary points of $l$, this bound requires exact closure.

    \item To compare Condition~2B (Gradient Dominance of the Weighted PI Objective) with \eqref{condition: general-mdp-approx-no-spurious-pi}, write
    \begin{equation*}
        \Phi_\theta(\theta') \coloneqq\cb(\pi_{\theta'}\mid\eta_{\pi_\theta},J_{\pi_\theta}).
    \end{equation*}
    Here, $\theta$ fixes the current policy, while $\theta'$ is the candidate parameter. By Lemma~\ref{lemma: gradient dominance and PLK}, Condition~2B implies
    \begin{equation*}
        \Phi_\theta(z)-\min_{\theta'\in\Theta}\Phi_\theta(\theta') \le C_{\rm PI}\min_{g\in N_\Theta(z)} \|\nabla\Phi_\theta(z)+g\|_2^\alpha, \qquad \forall\theta,z\in\Theta,
    \end{equation*}
    with $\alpha=2$ and $C_{\rm PI}=c^2/(2\mu)$ when $\mu>0$, or with $\alpha=1$ and $C_{\rm PI}=cR$ when $\mu=0$ and $\Theta$ has diameter at most $R>0$. In comparison, the policy gradient~\eqref{eq: general-mdp-policy-gradient} allows our condition~\eqref{condition: general-mdp-approx-no-spurious-pi} to be written as
    \begin{equation*}
        \Phi_\theta(\theta)-\min_{\theta'\in\Theta}\Phi_\theta(\theta') \le C_{\rm PI}\mathcal R(\theta)^\alpha =C_{\rm PI}\min_{g\in N_\Theta(\theta)} \|\nabla\Phi_\theta(\theta)+g\|_2^\alpha, \qquad \forall\theta\in\Theta.
    \end{equation*}
    Thus, our condition requires the bound only at the candidate $z=\theta$ for each current policy, whereas a global P{\L}K condition for $\Phi_\theta$ requires it at every candidate parameter $z\in\Theta$. 
\end{hitemize}

Consequently, Conditions~1 and~2B imply the two requirements of Theorem~\ref{thm: general-mdp-PLK} in the cases specified above, with $C_{\rm cl}=0$ and the stated values of $C_{\rm PI}$.

\subsection{P{\L}K Condition Without a Concentrability Assumption}\label{subsec: pointwise-PLK}
Theorem~\ref{thm: general-mdp-PLK} assumes $\kappa_\rho<\infty$ in addition to Conditions~\eqref{condition: general-mdp-approx-closure-pi} and~\eqref{condition: general-mdp-approx-no-spurious-pi}. The following theorem establishes the P{\L}K condition without assuming $\kappa_\rho<\infty$. It requires a bound on $J_{\pi_\theta}(s)-(TJ_{\pi_\theta})(s)$ that holds at every state, rather than only on average under the current policy's discounted occupancy measure.

\begin{theorem}\label{thm: general-mdp-pointwise-PLK}
    Suppose Assumptions~\ref{assumption: general-mdp-technical} and~\ref{assumption: general-mdp-regularity} hold. Assume that there exist constants $C>0$ and $\alpha\in[1,2]$ such that
    \begin{equation}\label{condition: general-mdp-pointwise-error}
        J_{\pi_\theta}(s)-(TJ_{\pi_\theta})(s)\le C\mathcal R(\theta)^\alpha, \qquad \forall \theta\in\Theta,\ s\in\cs.
    \end{equation}
    Then $l$ satisfies the P{\L}K condition on $\Theta$ with exponent $\alpha$ and constant $1/(2C)$:
    \begin{equation}\label{ineq: general-mdp-pointwise-PLK}
        l(\theta)-l(\theta^*)\le C\mathcal R(\theta)^\alpha, \qquad \forall \theta\in\Theta.
    \end{equation}
\end{theorem}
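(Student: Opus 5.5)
The plan is to use the performance difference lemma to turn the pointwise Bellman-residual bound into a bound on the objective gap. The key point is that the integrating measure will be $\eta_{\pi_{\theta^*}}$, a probability measure, so no concentrability coefficient appears.

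First I will write the performance difference identity for the optimal parameter $\theta^*$ and an arbitrary $\theta\in\Theta$:
\begin{equation*}
    l(\theta^*)-l(\theta)=\int_\cs\left[Q_{\pi_\theta}(s,\pi_{\theta^*}(s))-J_{\pi_\theta}(s)\right]\eta_{\pi_{\theta^*}}(ds).
\end{equation*}
To justify it under Assumption~\ref{assumption: general-mdp-technical}, I start from the Bellman identity $J_{\pi_\theta}=T_{\pi_\theta}J_{\pi_\theta}$ and consider the process generated by $\pi_{\theta^*}$ from $s_0\sim\rho$. A telescoping sum gives
\begin{equation*}
    \sum_{t\ge0}\gamma^t\left[g(s_t,\pi_{\theta^*}(s_t))+\gamma J_{\pi_\theta}(s_{t+1})-J_{\pi_\theta}(s_t)\right].
\end{equation*}
Taking expectations, exchanging sum and expectation, and using the integrability in Assumption~\ref{assumption: general-mdp-technical}.\ref{assumption: general-mdp-integrability} yields $(1-\gamma)^{-1}\int (T_{\pi_{\theta^*}}J_{\pi_\theta}-J_{\pi_\theta})\,d\eta_{\pi_{\theta^*}}$ on one side and $(1-\gamma)^{-1}[l(\theta^*)-(1-\gamma)\int J_{\pi_\theta}\,d\rho]$ on the other. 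The middle $(1-\gamma)$ factor relates $\int J_{\pi_\theta}d\rho$ to $l(\theta)/(1-\gamma)$. The assumption that $J_{\pi_\theta}$, $T_{\pi_{\theta'}}J_{\pi_\theta}$ and $TJ_{\pi_\theta}$ are $\eta_{\pi_{\bar\theta}}$-integrable, with $\bar\theta=\theta^*$ and $\theta'=\theta^*$, is exactly what this exchange requires. I expect this bookkeeping, namely the Fubini/telescoping justification with possibly unbounded value functions, to be the main technical obstacle. I would handle it by truncating at horizon $N$ and letting $N\to\infty$ via dominated convergence using the integrability of $J_{\pi_\theta}$ under $\eta_{\pi_{\theta^*}}$. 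Alternatively, I would cite the general-state performance difference lemma as in \citet{bhandari2024global}.

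Next I bound the integrand pointwise. Because $\pi_{\theta^*}(s)\in\ca_s$, we have $Q_{\pi_\theta}(s,\pi_{\theta^*}(s))\ge \inf_a Q_{\pi_\theta}(s,a)=(TJ_{\pi_\theta})(s)$. Hence
\begin{equation*}
    l(\theta)-l(\theta^*)=\int_\cs\left[J_{\pi_\theta}(s)-Q_{\pi_\theta}(s,\pi_{\theta^*}(s))\right]\eta_{\pi_{\theta^*}}(ds)\le\int_\cs\left[J_{\pi_\theta}(s)-(TJ_{\pi_\theta})(s)\right]\eta_{\pi_{\theta^*}}(ds).
\end{equation*}
Condition~\eqref{condition: general-mdp-pointwise-error} bounds the integrand by $C\mathcal R(\theta)^\alpha$ at every state. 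Since $\eta_{\pi_{\theta^*}}$ has total mass one, this gives $l(\theta)-l(\theta^*)\le C\mathcal R(\theta)^\alpha$, which is \eqref{ineq: general-mdp-pointwise-PLK}.

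Finally, I match the result to Definition~\ref{def: PLK condition}. By Assumption~\ref{assumption: general-mdp-regularity}.\ref{assumption: optimal policy}, $l(\theta^*)=\min_\Theta l$. Differentiability of $l$ follows from Assumption~\ref{assumption: general-mdp-regularity}.\ref{assumption: differentiablity} and \eqref{eq: general-mdp-policy-gradient}. Also $\mathcal R(\theta)^\alpha=\min_{g\in N_\Theta(\theta)}\|\nabla l(\theta)+g\|_2^\alpha$. Writing $C=1/(2\mu)$ then gives the P{\L}K constant $\mu=1/(2C)$ with exponent $\alpha$.
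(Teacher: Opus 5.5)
Your proposal is correct and matches the paper's proof: apply the performance difference lemma with the occupancy measure $\eta_{\pi_{\theta^*}}$, lower-bound $T_{\pi_{\theta^*}}J_{\pi_\theta}$ by $TJ_{\pi_\theta}$, and integrate the pointwise residual bound against the probability measure $\eta_{\pi_{\theta^*}}$. Your truncation and telescoping justification of the lemma adds detail that the paper does not spell out; the paper simply cites the lemma as standard under Assumption~\ref{assumption: general-mdp-technical}.
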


At any stationary point $\theta$ of $l$ over $\Theta$, we have $\mathcal R(\theta)=0$. Since $J_{\pi_\theta}\ge TJ_{\pi_\theta}$, Condition~\eqref{condition: general-mdp-pointwise-error} implies the Bellman optimality equation $J_{\pi_\theta}=TJ_{\pi_\theta}$ and hence $l(\theta)=l(\theta^*)$ under the standing assumptions. Furthermore, if the weighted policy-iteration objective attains its minimum over $\Theta$ for every current parameter $\theta$, then Condition~\eqref{condition: general-mdp-pointwise-error} implies Conditions~\eqref{condition: general-mdp-approx-closure-pi} and~\eqref{condition: general-mdp-approx-no-spurious-pi} in Theorem~\ref{thm: general-mdp-PLK}. To see this, fix $\theta\in\Theta$. Since $\pi_\theta\in\Pi_\Theta\subseteq\Pi$,
\begin{equation*}
    \min_{\pi'\in\Pi}\cb(\pi'\mid\eta_{\pi_\theta},J_{\pi_\theta}) \le\min_{\theta'\in\Theta}\cb(\pi_{\theta'}\mid\eta_{\pi_\theta},J_{\pi_\theta}) \le\cb(\pi_\theta\mid\eta_{\pi_\theta},J_{\pi_\theta}).
\end{equation*}
The left-hand sides of Conditions~\eqref{condition: general-mdp-approx-closure-pi} and~\eqref{condition: general-mdp-approx-no-spurious-pi} are the differences between the first two and the last two quantities, respectively. By the Bellman equation and Condition~\eqref{condition: general-mdp-pointwise-error},
\begin{equation*}
    \cb(\pi_\theta\mid\eta_{\pi_\theta},J_{\pi_\theta})-\min_{\pi'\in\Pi}\cb(\pi'\mid\eta_{\pi_\theta},J_{\pi_\theta}) =\int_\cs\left[J_{\pi_\theta}(s)-(TJ_{\pi_\theta})(s)\right]\eta_{\pi_\theta}(ds) \le C\mathcal R(\theta)^\alpha.
\end{equation*}
Thus, Conditions~\eqref{condition: general-mdp-approx-closure-pi} and~\eqref{condition: general-mdp-approx-no-spurious-pi} hold with $C_{\rm cl}=C_{\rm PI}=C$ and the same exponent $\alpha$. However, Conditions~\eqref{condition: general-mdp-approx-closure-pi} and~\eqref{condition: general-mdp-approx-no-spurious-pi} do not, in general, imply the pointwise condition~\eqref{condition: general-mdp-pointwise-error}. The following two-state MDP satisfies Conditions~1 and~2B, as well as all the assumptions of Theorems~\ref{thm: general-mdp-no-spurious} and~\ref{thm: general-mdp-PLK}, but violates~\eqref{condition: general-mdp-pointwise-error}.

\begin{example}\label{example: weighted-without-pointwise}
    Consider the two-state, two-action MDP in Figure~\ref{fig: tabular MDP}, with $\cs=\{0,1\}$, discount factor $\gamma=1/2$, and initial distribution $\rho(\{0\})=1$. Action $0$ incurs cost $0$, whereas action $1$ incurs cost $1$. At state $0$, action $0$ leaves the state unchanged, and action $1$ moves the process to state $1$. State $1$ is absorbing under both actions.
    
    \begin{figure}[htbp]
        \FIGURE
        {\includegraphics[width=0.5\textwidth]{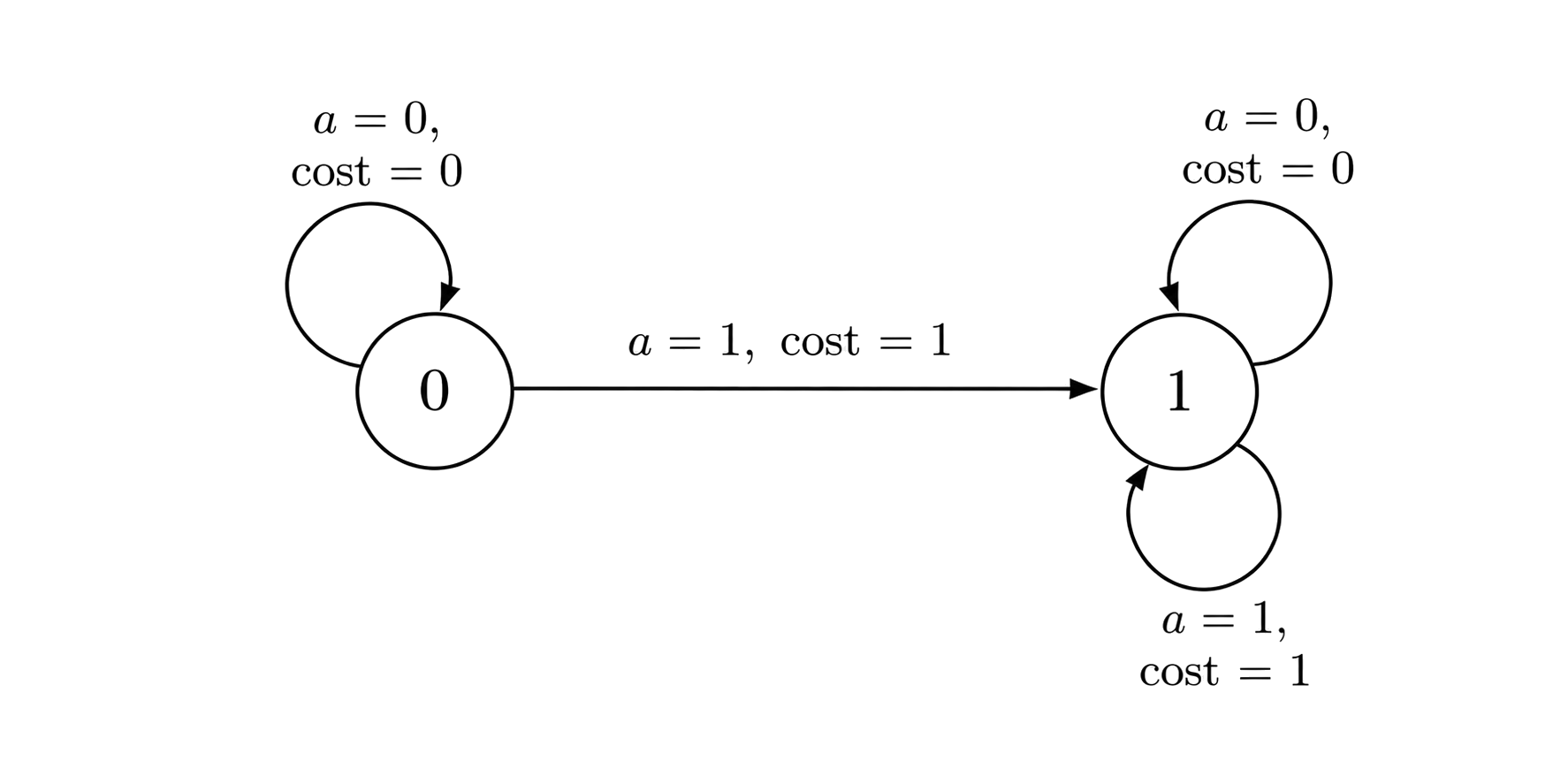}}
        {Two-State, Two-Action MDP in Example~\ref{example: weighted-without-pointwise}\label{fig: tabular MDP}}
        {}
    \end{figure}
    
    To represent stochastic policies in our formulation, let $a\in[0,1]$ denote the probability of selecting action $1$. The feasible action set is then $\ca_s=[0,1]$, and the expected one-period cost and transition probabilities are
    \begin{equation*}
        \begin{aligned}
            g(s,a)=a, \qquad s\in\{0,1\}, \qquad  P(\{0\}\mid0,a) =1-a, \qquad P(\{1\}\mid0,a)=a, \qquad P(\{1\}\mid1,a)=1.
        \end{aligned}
    \end{equation*}
    The policy class $\pi_\theta(s)=\theta_s$, with $\theta=(\theta_0,\theta_1)\in\Theta=[0,1]^2$, therefore represents all stationary stochastic policies of the original MDP. A direct calculation gives
    \begin{equation*}
        \begin{aligned}
            J_{\pi_\theta}(0)&=\frac{2\theta_0(1+\theta_1)}{1+\theta_0}, \qquad J_{\pi_\theta}(1)=2\theta_1, \qquad \eta_{\pi_\theta}(\{0\}) = \frac{1}{1+\theta_0}, \qquad \eta_{\pi_\theta}(\{1\})=\frac{\theta_0}{1+\theta_0},\\
            l(\theta)&=\frac{\theta_0(1+\theta_1)}{1+\theta_0}, \qquad \nabla l(\theta)=\left(\frac{1+\theta_1}{(1+\theta_0)^2},\frac{\theta_0}{1+\theta_0}\right).
        \end{aligned}
    \end{equation*}

    We first verify the assumptions common to Theorems~\ref{thm: general-mdp-no-spurious} and~\ref{thm: general-mdp-PLK}. The policy $\pi_{\theta^*}$ with $\theta^*=(0,0)$ incurs zero cost from either state and is therefore optimal. Since it remains in state $0$ when initialized from $\rho$,
    \begin{equation*}
        \eta_{\pi_{\theta^*}}=\rho, \qquad \left\|\frac{d\eta_{\pi_{\theta^*}}}{d\rho}\right\|_\infty=1.
    \end{equation*}
    Thus, the absolute-continuity assumption of Theorem~\ref{thm: general-mdp-no-spurious} holds. Moreover,~\eqref{ineq: effective-concentrability-density-ratio} gives $\kappa_\rho\le1<\infty$, verifying the effective concentrability assumption of Theorem~\ref{thm: general-mdp-PLK}. The finite state space and bounded costs ensure the required measurability and integrability, and the affine Bellman objectives on $[0,1]$ admit measurable minimizers. Moreover, the displayed expressions for the value functions and occupancy measures, together with the affine dependence on the candidate policy, show that $(\theta',\bar\theta)\mapsto\cb(\pi_{\theta'}\mid\eta_{\pi_{\bar\theta}},J_{\pi_\theta})$ admits a continuously differentiable extension to an open set containing $\Theta\times\Theta$. Hence, Assumptions~\ref{assumption: general-mdp-technical} and~\ref{assumption: general-mdp-regularity} hold.

    Every stationary stochastic policy is determined by its probabilities of selecting action $1$ in states $0$ and $1$. Since $\theta'_0$ and $\theta'_1$ can each take any value in $[0,1]$, optimizing over $\theta'\in\Theta$ is equivalent to optimizing over all policies $\pi'\in\Pi$ in our formulation. Hence, for every $\theta\in\Theta$,
    \begin{equation*}
        \min_{\theta'\in\Theta}\cb(\pi_{\theta'}\mid\eta_{\pi_\theta},J_{\pi_\theta})=\min_{\pi'\in\Pi}\cb(\pi'\mid\eta_{\pi_\theta},J_{\pi_\theta}).
    \end{equation*}
    This equality verifies Condition~1 and hence the closure requirement in Theorem~\ref{thm: general-mdp-no-spurious}. It also verifies~\eqref{condition: general-mdp-approx-closure-pi} with $C_{\rm cl}=0$. For every fixed $\theta\in\Theta$, the weighted policy-iteration objective $\theta'\mapsto\cb(\pi_{\theta'}\mid\eta_{\pi_\theta},J_{\pi_\theta})$ is affine. Therefore, Condition~2A holds, and Condition~2B holds with $c=1$ and $\mu=0$. This verifies the remaining structural requirement of Theorem~\ref{thm: general-mdp-no-spurious}. Furthermore, since $\Theta$ has diameter $\sqrt{2}$, Lemma~\ref{lemma: gradient dominance and PLK} and the policy-gradient identity~\eqref{eq: general-mdp-policy-gradient} give
    \begin{equation*}
        \begin{aligned}
            \cb(\pi_\theta\mid\eta_{\pi_\theta},J_{\pi_\theta})-\min_{\theta'\in\Theta}\cb(\pi_{\theta'}\mid\eta_{\pi_\theta},J_{\pi_\theta}) \le\sqrt{2}\min_{g\in N_\Theta(\theta)}\|\nabla l(\theta)+g\|_2 =\sqrt{2}\mathcal R(\theta).
        \end{aligned}
    \end{equation*}
    Therefore, Condition~\eqref{condition: general-mdp-approx-no-spurious-pi} holds with $C_{\rm PI}=\sqrt{2}$ and $\alpha=1$. Both Theorems~\ref{thm: general-mdp-no-spurious} and~\ref{thm: general-mdp-PLK} apply.

    The pointwise condition in Theorem~\ref{thm: general-mdp-pointwise-PLK} nevertheless fails. Consider $\theta=(0,1)$, for which
    \begin{equation*}
        l(0,1)=l(\theta^*)=0, \qquad \nabla l(0,1)=(2,0).
    \end{equation*}
    Since $(-2,0)\in N_\Theta((0,1))$, the point $(0,1)$ is stationary for $l$ over $\Theta$ and $\mathcal R(0,1)=0$. However, at state $1$,
    \begin{equation*}
        J_{\pi_{(0,1)}}(1)-(TJ_{\pi_{(0,1)}})(1)=2-1=1.
    \end{equation*}
    Condition~\eqref{condition: general-mdp-pointwise-error} would therefore require $1\le C\mathcal R(0,1)^\alpha=0$, which is impossible for every finite $C$ and every $\alpha\in[1,2]$. The policy $\pi_{(0,1)}$ is optimal for $l$ because it remains in state $0$ when initialized from $\rho$, but it selects a suboptimal action at state $1$. Theorems~\ref{thm: general-mdp-no-spurious} and~\ref{thm: general-mdp-PLK} permit this distinction, whereas Condition~\eqref{condition: general-mdp-pointwise-error} requires Bellman optimality at every state whenever $\theta$ is a stationary point of $l$ over $\Theta$.
\end{example}

The next example shows that Theorem~\ref{thm: general-mdp-pointwise-PLK} can apply even when $\kappa_\rho=\infty$, in which case the concentrability assumption of Theorem~\ref{thm: general-mdp-PLK} fails.

\begin{figure}[htbp]
    \FIGURE
    {\includegraphics[width=0.35\textwidth]{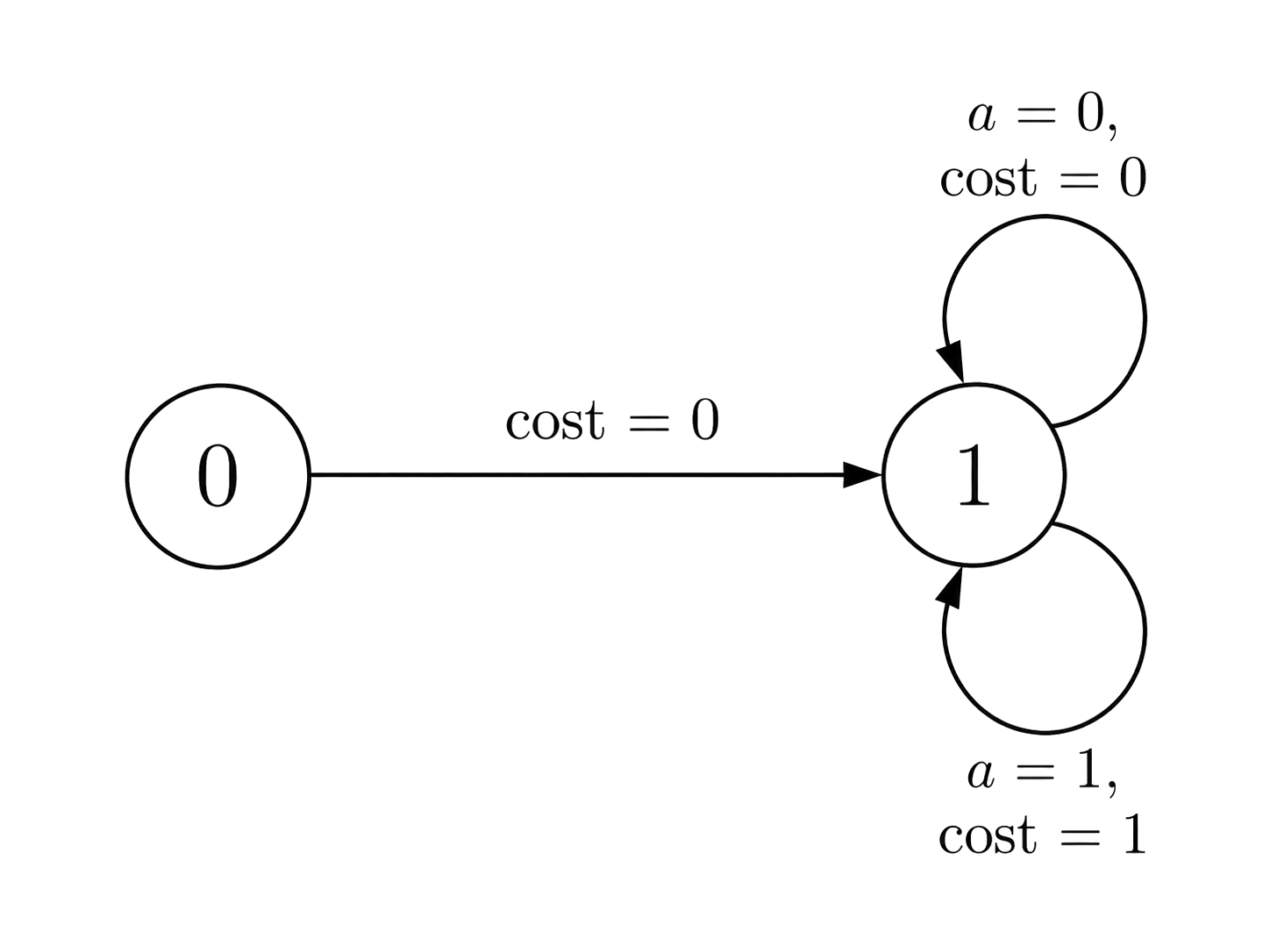}}
    {Two-State MDP with $\kappa_\rho=\infty$ in Example~\ref{example: without-concentrability}
    \label{fig: pointwise-without-concentrability}}
    {}
\end{figure}

\begin{example}\label{example: without-concentrability}
    Consider the two-state MDP in Figure~\ref{fig: pointwise-without-concentrability}, with $\gamma=1/2$ and $\rho(\{0\})=1$. At state $0$, the only action has zero cost and moves to state $1$. State $1$ is absorbing under both actions, with actions $0$ and $1$ incurring costs $0$ and $1$, respectively. As in Example~\ref{example: weighted-without-pointwise}, represent randomization between these actions by $a\in[0,1]$, the probability of selecting action $1$, so the expected one-period cost at state $1$ is $a$. Parameterize the policy at state $1$ by $\theta\in[0,1]$, with optimal parameter $\theta^*=0$. Then
    \begin{equation*}
        J_{\pi_\theta}(0)=\theta,\qquad J_{\pi_\theta}(1)=2\theta, \qquad l(\theta)=\theta/2.
    \end{equation*}
    The stationarity measure is $\mathcal R(0)=0$ and $\mathcal R(\theta)=1/2$ for $\theta>0$, while the Bellman residuals at states 0 and 1 are $0$ and $\theta$, respectively. Hence \eqref{condition: general-mdp-pointwise-error} holds with $\alpha=1$ and $C=2$. The finite state space, bounded costs, and affine value and policy-improvement expressions also verify Assumptions~\ref{assumption: general-mdp-technical} and~\ref{assumption: general-mdp-regularity}. However, the integral on the right-hand side of~\eqref{ineq: effective-concentrability} is zero for every $\theta$, while $l(\theta)-l(0)=\theta/2>0$ for $\theta>0$. Hence, $\kappa_\rho=\infty$.
\end{example}

Examples~\ref{example: weighted-without-pointwise} and~\ref{example: without-concentrability} show that Theorems~\ref{thm: general-mdp-PLK} and~\ref{thm: general-mdp-pointwise-PLK} provide complementary sufficient conditions for establishing the P{\L}K condition for policy optimization. Under the stated attainment assumption, Condition~\eqref{condition: general-mdp-pointwise-error} in Theorem~\ref{thm: general-mdp-pointwise-PLK} implies Conditions~\eqref{condition: general-mdp-approx-closure-pi} and~\eqref{condition: general-mdp-approx-no-spurious-pi} in Theorem~\ref{thm: general-mdp-PLK}, but the converse need not hold. However, Theorem~\ref{thm: general-mdp-pointwise-PLK} does not require the finite effective concentrability coefficient assumed in Theorem~\ref{thm: general-mdp-PLK}.

\section{Applications}\label{sec: application}
Our general results encompass settings covered by the exact-closure framework when the common standing assumptions in Section~\ref{sec: landscape} hold for the same policy class and parameter domain. We next verify the pointwise condition in Theorem~\ref{thm: general-mdp-pointwise-PLK} for inventory systems with Markov-modulated demand and stochastic cash-balance problems. For both models, the policy gradient objective satisfies the P{\L}K condition without requiring a finite effective concentrability coefficient or a bounded distribution-mismatch coefficient.

\subsection{Inventory Control with Markov-Modulated Demand}\label{sec: markov-demand}
We consider a Markov-modulated extension of the inventory model in Section~\ref{subsec: single-state-inventory}. The extension is motivated by applications in which demand may exhibit temporal dependence due to persistent factors such as economic conditions, market conditions, or seasonality. Following \citet{song1993inventory}, we capture such dependence by letting the demand distribution be modulated by an exogenous finite-state discrete-time Markov chain, whose state represents the underlying demand environment. Apart from this environment process, the ordering decision, inventory transition, and holding/backlogging cost retain the same structure as in Section~\ref{subsec: single-state-inventory}. The only modification is that the demand distribution, and hence the one-period expected cost, may depend on the current environment state.

Let $\ci$ denote the state space of the exogenous Markov chain. At the beginning of period $t$, the MDP state is $s_t=(x_t,i_t)\in(-\infty, B]\times\ci$, where $x_t$ is the pre-order inventory level, $i_t$ is the environment state, and $B>0$ is the warehouse capacity. As in Section~\ref{subsec: single-state-inventory}, the action is the post-order inventory level $y_t\in\ca_{(x_t,i_t)}=[x_t,B]$. Conditional on the current environment state $i_t=i$, the demand $D_t$ and the next environment state $i_{t+1}$ are independent of each other and of the history. The demand is nonnegative and follows the distribution $P_D(\cdot\mid i)$ with a finite mean, while the environment process evolves according to the transition matrix $p=(p_{ij})_{i,j\in\ci}$, where $p_{ij}=\mathbb P(i_{t+1}=j\mid i_t=i)$ and $\sum_{j\in\ci}p_{ij}=1$ for all $i\in\ci$. After $y_t$ is chosen and demand is realized, the next inventory level is $x_{t+1}=y_t-D_t$. Let $\rho$ and $\nu$ denote the initial distributions of $x_0$ and $i_0$, respectively. We assume that $x_0$ has a finite mean. By the independence of $x_0$ and $i_0$, the initial state distribution is $\rho\otimes\nu$.

The expected holding and backlogging cost incurred after ordering up to level $y$ in environment state $i$ is
\begin{equation*}
    C_i(y)\coloneqq \mathbb E_{D\sim P_D(\cdot\mid i)}\left[h(y-D)^+ + b(D-y)^+\right],
\end{equation*}
where $h\ge0$ and $b\ge0$ are the unit holding and backlogging costs, respectively. Thus, the one-period cost is $g((x,i),y)=C_i(y)$ for $y\in\ca_{(x,i)}$. Since the holding and backlogging cost is convex in $y$ for each demand realization, $C_i$ is convex for every $i\in\ci$. We assume zero ordering cost for simplicity. The same analysis applies to a unit ordering cost $c\ge0$ when $b\ge(1-\gamma)c$, so that ordering one unit costs no more than the discounted cost of backlogging it indefinitely. A value-function transformation replaces $(h,b)$ by $(h+(1-\gamma)c, b-(1-\gamma)c)$, up to a policy-independent term. If $b<(1-\gamma)c$, never ordering is optimal.

We focus on the state-dependent base-stock policy class, which is optimal for this model \citep{song1993inventory}. A parameter $\theta=(\theta_i)_{i\in\ci}$ specifies one base-stock level for each environment state:
\begin{equation*}
    \pi_\theta(x,i) = x\vee\theta_i, \qquad \Theta\coloneqq\left\{\theta\in\R^{|\ci|}:0\le\theta_i\le B,\ \forall i\in\ci\right\}, \qquad
    \Pi_\Theta\coloneqq\{\pi_\theta:\theta\in\Theta\}.
\end{equation*}
For $\theta\in\Theta$ and each environment state $i\in\ci$, define
\begin{equation*}
    G_{\theta,i}(y) \coloneqq C_i(y) + \gamma \sum_{j\in\ci}p_{ij} \mathbb E_{D\sim P_D(\cdot\mid i)} \left[ J_{\pi_\theta}(y-D,j) \right].
\end{equation*}
Then, $J_{\pi_\theta}(x,i)=G_{\theta,i}(x\vee\theta_i)$ by the Bellman equation. Moreover, for every $\theta'\in\Theta$,
\begin{equation*}
    \cb(\pi_{\theta'}\mid \eta_{\pi_\theta},J_{\pi_\theta}) = \int_{(-\infty,B]\times\ci} G_{\theta,i}(x\vee\theta'_i) \ \eta_{\pi_\theta}(dx,di).
\end{equation*}

We impose the following assumptions on the initial state and conditional demand distributions.

\begin{assumption}
    \label{assumption: inventory}
    Let $\rho(z)\coloneqq\mathbb P(x_0\le z)$ and $P_D(z\mid i)\coloneqq\mathbb P(D\le z\mid i)$. The following conditions hold.
    \begin{henumerate}
        \item \label{assumption: inventory-independence}
        The initial inventory level $x_0$ is independent of the initial environment state $i_0$ and the demand process.

        \item \label{assumption: inventory-lipschitz}
        The function $\rho$ is $L_\rho$-Lipschitz continuous, and $P_D(\cdot\mid i)$ is $L_D$-Lipschitz continuous for every $i\in\ci$. We denote their densities by $p_\rho$ and $p_D(\cdot\mid i)$, respectively.

        \item \label{assumption: exploratory-initial-distribution}
        There exist constants $\rho_{\min}>0$ and $\nu_{\min}>0$ such that $\rho(0)\ge\rho_{\min}$ and $\nu(i)\ge\nu_{\min}$ for every $i\in\ci$.

        \item \label{assumption: inventory-density-lower-bound}
        There exists $\mu_D>0$ such that $p_D(d\mid i)\ge\mu_D$ for every $d\in[0,B]$ and $i\in\ci$.
    \end{henumerate}
\end{assumption}

Assumption~\ref{assumption: inventory}.\ref{assumption: inventory-independence} is a standard independence condition in Markov-modulated demand models \citep{song1993inventory}. Assumption~\ref{assumption: inventory}.\ref{assumption: inventory-lipschitz} ensures that $l(\theta)$ is continuously differentiable; this condition is satisfied, for example, by many commonly used distributions with bounded densities, such as uniform, exponential, and Erlang distributions. Assumption~\ref{assumption: inventory}.\ref{assumption: exploratory-initial-distribution} ensures that each base-stock level affects the objective with uniformly positive probability, preventing its gradient from vanishing merely because of insufficient exploration. The condition holds, for example, when $x_0$ is uniformly distributed on $[-1,0]$ and $\nu$ is uniform over the environment states. Finally, Assumption~\ref{assumption: inventory}.\ref{assumption: inventory-density-lower-bound} guarantees that each one-period cost $C_i$ is strongly convex on $[0, B]$ when $h + b > 0$ and holds for distributions whose conditional densities are uniformly bounded away from zero on this interval, such as uniform and exponential distributions.

Under the first three parts of Assumption~\ref{assumption: inventory}, the model satisfies Assumptions~\ref{assumption: general-mdp-technical} and~\ref{assumption: general-mdp-regularity}, as established in Lemma~\ref{lemma: markov-demand-regularity} in Section~\ref{appendix: inventory} of the electronic companion. Classical Bellman-equation analysis of the Markov-modulated inventory model shows that, if $\theta^*$ parameterizes an optimal state-dependent base-stock policy, then $G_{\theta^*,i}$ is convex on $[0,B]$ for every $i\in\ci$ \citep{song1993inventory}. For an arbitrary parameter $\theta$, however, $J_{\pi_\theta}$ is the policy-evaluation value function associated with the fixed policy $\pi_\theta$, so the classical argument does not imply that $G_{\theta,i}$ is convex. The following lemma establishes a relaxation of this optimal-policy convexity for every $\theta\in\Theta$.

\begin{lemma}[Approximate Convexity] \label{lemma: markov-approx-convexity}
    Suppose the first three parts of Assumption~\ref{assumption: inventory} hold. Then, for every $\theta\in\Theta$, $i\in\ci$, and $0\le z_1<z_2\le B$,
    \begin{equation} \label{ineq: markov-G-slope-linear}
        G_{\theta,i}'(z_2)-G_{\theta,i}'(z_1) \ge -\frac{\gamma\mathcal R(\theta)} {\rho_{\min}\nu_{\min}(1-\gamma)^2}.
    \end{equation}
    In particular, if $\mathcal R(\theta)=0$, then $G_{\theta,i}$ is convex on $[0,B]$. If, in addition, Assumption~\ref{assumption: inventory}.\ref{assumption: inventory-density-lower-bound} holds, then
    \begin{equation} \label{ineq: markov-G-slope-quadratic}
        G_{\theta,i}'(z_2)-G_{\theta,i}'(z_1) \ge (h+b)\mu_D(z_2-z_1) - \frac{\gamma\mathcal R(\theta)} {\rho_{\min}\nu_{\min}(1-\gamma)^2}, \qquad 0\le z_1<z_2\le B.
    \end{equation}
\end{lemma}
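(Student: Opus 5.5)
The plan is a contraction (fixed-point) argument on the worst ``negative slope jump'' of the functions $G_{\theta,j}$, combined with a bound on $G_{\theta,j}'(\theta_j)$ extracted from the first-order stationarity measure. Fix $\theta\in\Theta$ and set
\begin{equation*}
\varepsilon\coloneqq\frac{\mathcal R(\theta)}{(1-\gamma)\rho_{\min}\nu_{\min}}.
\end{equation*}

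\textbf{Step 1 (slope at the base-stock level).} First, using the differentiability established in Lemma~\ref{lemma: markov-demand-regularity} and the policy-gradient identity~\eqref{eq: general-mdp-policy-gradient}, I would compute
\begin{equation*}
\partial_{\theta_j} l(\theta)=G_{\theta,j}'(\theta_j)\,\eta_{\pi_\theta}\bigl(\{(x,j):x<\theta_j\}\bigr).
\end{equation*}
This holds because $\cb(\pi_{\theta'}\mid\eta_{\pi_\theta},J_{\pi_\theta})=\int G_{\theta,i}(x\vee\theta'_i)\,\eta_{\pi_\theta}(dx,di)$, and $\rho$ has no atoms. Since $\eta_{\pi_\theta}\ge(1-\gamma)\rho\otimes\nu$ and $\theta_j\ge 0$, the weight satisfies
\begin{equation*}
\eta_{\pi_\theta}\bigl(\{(x,j):x<\theta_j\}\bigr)\ge(1-\gamma)\rho(0)\nu(j)\ge(1-\gamma)\rho_{\min}\nu_{\min}.
\end{equation*}
Because $\Theta$ is a box, the normal cone splits coordinatewise. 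Reading off the three cases (interior $\theta_j$, $\theta_j=0$, and $\theta_j=B$), I get $G_{\theta,j}'(\theta_j)\ge-\varepsilon$ whenever $\theta_j<B$. Only this one-sided bound will be needed.

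\textbf{Step 2 (derivative recursion).} Next, differentiate $G_{\theta,i}$ under the expectation; bounded derivatives and Lipschitz distributions justify this. Using $J_{\pi_\theta}(x,j)=G_{\theta,j}(x\vee\theta_j)$, I would write
\begin{equation*}
G_{\theta,i}'(y)=C_i'(y)+\gamma\sum_j p_{ij}\,\E\bigl[\psi_j(y-D)\bigr],\qquad \psi_j(x)\coloneqq G_{\theta,j}'(x)\,\mathbf 1\{x>\theta_j\}.
\end{equation*}
Then define
\begin{equation*}
\delta\coloneqq\sup\bigl\{[G_{\theta,j}'(z_1)-G_{\theta,j}'(z_2)]^+ : j\in\ci,\ 0\le z_1<z_2\le B\bigr\}.
\end{equation*}
This is finite because $|G_{\theta,j}'|\le\max(h,b)/(1-\gamma)$, since the value function is Lipschitz with that constant.

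For $x_1<x_2\le B$, I would bound $\psi_j(x_2)-\psi_j(x_1)$ in three cases.
\begin{itemize}
\item Both points are at most $\theta_j$: the difference is $0$.
\item Both points exceed $\theta_j\ge0$: the difference is at least $-\delta$.
\item $x_1\le\theta_j<x_2$: here $\theta_j<B$, so $G_{\theta,j}'(x_2)\ge G_{\theta,j}'(\theta_j)-\delta\ge-\varepsilon-\delta$ by Step 1.
\end{itemize}
Only points in $[0,B]$ ever enter, because $\psi_j$ vanishes below $\theta_j\ge0$. Since $C_i'$ is nondecreasing by convexity, taking $x_k=z_k-D$ gives
\begin{equation*}
G_{\theta,i}'(z_2)-G_{\theta,i}'(z_1)\ge-\gamma(\delta+\varepsilon).
\end{equation*}
Taking the supremum yields $\delta\le\gamma(\delta+\varepsilon)$, hence $\delta\le\gamma\varepsilon/(1-\gamma)$. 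This is exactly~\eqref{ineq: markov-G-slope-linear}, and convexity when $\mathcal R(\theta)=0$ follows immediately.

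\textbf{Step 3 (curvature version).} For~\eqref{ineq: markov-G-slope-quadratic}, I would rerun the same one-step inequality but keep the curvature term
\begin{equation*}
C_i'(z_2)-C_i'(z_1)=(h+b)\bigl[P_D(z_2\mid i)-P_D(z_1\mid i)\bigr]\ge(h+b)\mu_D(z_2-z_1).
\end{equation*}
I would then bound the continuation term by $-\gamma(\delta+\varepsilon)$, using the value of $\delta$ already obtained in Step 2. Since $\gamma\bigl(\gamma\varepsilon/(1-\gamma)+\varepsilon\bigr)=\gamma\varepsilon/(1-\gamma)$, this gives the same deficit as in~\eqref{ineq: markov-G-slope-linear}.

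\textbf{Main obstacle.} The step I expect to be delicate is Step 1 together with the justification of the derivative formula in Step 2. Specifically, one must check that $G_{\theta,j}$ is differentiable on $[0,B]$, that $x\mapsto J_{\pi_\theta}(x,j)$ has derivative $\psi_j$ almost everywhere (the kink at $\theta_j$ is harmless because $P_D$ has a density), and that the occupancy weight is bounded below uniformly even when $\theta_j=0$. The constraint-boundary cases also need care, to confirm that only the lower bound $G_{\theta,j}'(\theta_j)\ge-\varepsilon$ is ever used.
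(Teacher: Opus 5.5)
Your proposal is correct and follows the paper's proof. It uses the same ingredients: the normal-cone bound $G_{\theta,j}'(\theta_j)\ge-\mathcal R(\theta)/[\rho_{\min}\nu_{\min}(1-\gamma)]$ for $\theta_j<B$, the differentiated Bellman recursion with a three-case analysis at the threshold, and a fixed-point inequality. The only difference is cosmetic: you bootstrap the worst slope decrease $\delta$ of $G_{\theta,j}$ on $[0,B]$, whereas the paper bootstraps the worst one-sided slope decrease $m$ of $J_{\pi_\theta}(\cdot,j)$ and transfers it to $G_{\theta,i}$ via $G_{\theta,i}'(z_2)-G_{\theta,i}'(z_1)\ge\gamma m$, which is equivalent.
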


Recall that a continuously differentiable function on an interval is convex if and only if its derivative is nondecreasing. Inequality~\eqref{ineq: markov-G-slope-linear} allows $G_{\theta,i}'$ to decrease, but bounds any such decrease by
\begin{equation*}
    \frac{\gamma\mathcal R(\theta)} {\rho_{\min}\nu_{\min}(1-\gamma)^2}.
\end{equation*}
This is the sense in which $G_{\theta, i}$ is approximately convex. Lemma~\ref{lemma: markov-approx-convexity} therefore extends the classical convexity conclusion beyond optimal base-stock parameters: $G_{\theta, i}$ is convex for every $i\in\ci$ whenever $\theta$ satisfies the first-order optimality condition for $l$ over $\Theta$. Under Assumption~\ref{assumption: inventory}.\ref{assumption: inventory-density-lower-bound}, inequality~\eqref{ineq: markov-G-slope-quadratic} provides the corresponding approximate strong-convexity property with curvature $(h+b)\mu_D$.

The key mechanism behind Lemma~\ref{lemma: markov-approx-convexity} is the propagation of approximate convexity through the Bellman equation. To see this, let $\Delta_J$ denote the largest possible decrease in the slope of $J_{\pi_\theta}(\cdot, i)$ across all inventory and environment states, and define $\Delta_G$ analogously for $G_{\theta, i}$. The convex one-period cost does not create additional decrease in slope. The expectation over random demand and the average over the next environment state do not amplify a decrease inherited from the future value function. The discount factor further reduces its magnitude, so $\Delta_G\leq\gamma\Delta_J$. Under the base-stock policy, the slope of $J_{\pi_\theta}(\cdot,i) = G_{\theta,i}(\cdot \vee \theta_i)$ is zero below $\theta_i$ and equals $G_{\theta,i}'$ above $\theta_i$. If $\theta_i<B$, crossing $\theta_i$ changes the slope from zero to $G_{\theta,i}'(\theta_i)$. A negative value of $G_{\theta,i}'(\theta_i)$ therefore creates a decrease in slope, whose magnitude is bounded by
\begin{equation*}
    \frac{\mathcal R(\theta)}{\rho_{\min}\nu_{\min}(1-\gamma)}.
\end{equation*}
Consequently,
\begin{equation*}
    \Delta_J \le \Delta_G + \frac{\mathcal R(\theta)} {\rho_{\min}\nu_{\min}(1-\gamma)} \le \gamma\Delta_J + \frac{\mathcal R(\theta)} {\rho_{\min}\nu_{\min}(1-\gamma)}.
\end{equation*}
Solving this inequality and using $\Delta_G\le\gamma\Delta_J$ yields
\begin{equation*}
    \Delta_G \le \frac{\gamma\mathcal R(\theta)} {\rho_{\min}\nu_{\min}(1-\gamma)^2},
\end{equation*}
which explains \eqref{ineq: markov-G-slope-linear}. Intuitively, the base-stock thresholds may introduce downward slope jumps, while errors inherited from future periods are repeatedly discounted. When $\mathcal R(\theta)=0$, the inequalities imply $\Delta_J=\Delta_G=0$, and exact convexity is recovered. We defer the proof to Appendix~\ref{appendix: inventory-core}.

Using Lemma~\ref{lemma: markov-approx-convexity}, we verify Condition~\eqref{condition: general-mdp-pointwise-error} and apply Theorem~\ref{thm: general-mdp-pointwise-PLK} to obtain the following result.

\begin{theorem}[P\L{}K Condition]\label{thm: markov-demand-PLK}
    Suppose the first three parts of Assumption~\ref{assumption: inventory} hold. Let $\theta^*\in\Theta$ be an optimal base-stock parameter. Then the following statements hold.
    \begin{enumerate}[label=(\roman*)]
        \item\label{markov-demand-no-suboptimal-stationary} For every $\theta\in\Theta$,
        \begin{equation}\label{ineq: markov-PLK-linear}
            l(\theta)-l(\theta^*)\le\frac{B}{\rho_{\min}\nu_{\min}(1-\gamma)^2}\mathcal R(\theta).
        \end{equation}
        Thus, $l$ satisfies the P{\L}K condition with exponent $1$ on $\Theta$.

        \item If, in addition, Assumption~\ref{assumption: inventory}.\ref{assumption: inventory-density-lower-bound} holds and $h+b>0$, then, for every $\theta\in\Theta$,
        \begin{equation}\label{ineq: markov-PLK}
            l(\theta)-l(\theta^*)\le\frac{1}{2(h+b)\mu_D\rho_{\min}^2\nu_{\min}^2(1-\gamma)^4}\mathcal R(\theta)^2.
        \end{equation}
        Thus, $l$ satisfies the P{\L}K condition with exponent $2$ on $\Theta$.
    \end{enumerate}
    In particular, part~(i) implies that every stationary point of $l$ over $\Theta$ is globally optimal over $\Theta$.
\end{theorem}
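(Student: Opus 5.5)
The plan is to verify the pointwise condition~\eqref{condition: general-mdp-pointwise-error} of Theorem~\ref{thm: general-mdp-pointwise-PLK} using Lemma~\ref{lemma: markov-approx-convexity}. Assumptions~\ref{assumption: general-mdp-technical} and~\ref{assumption: general-mdp-regularity} hold by the regularity lemma cited in the text. Fix $\theta\in\Theta$ and a state $(x,i)$. By the Bellman equation, $J_{\pi_\theta}(x,i)=G_{\theta,i}(x\vee\theta_i)$ and $(TJ_{\pi_\theta})(x,i)=\min_{y\in[x,B]}G_{\theta,i}(y)$. So it suffices to bound
\[
G_{\theta,i}(x\vee\theta_i)-\min_{y\in[x,B]}G_{\theta,i}(y)
\]
uniformly in $x$ by $C\mathcal R(\theta)^\alpha$. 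Write $\delta\coloneqq\gamma\mathcal R(\theta)/(\rho_{\min}\nu_{\min}(1-\gamma)^2)$ for the slope-decrease bound in~\eqref{ineq: markov-G-slope-linear}.

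\textbf{First-order control at the threshold.} I need the derivative of $l$ in coordinate $\theta_i$. Using~\eqref{eq: general-mdp-policy-gradient}, it equals $\int \mathbf 1\{x<\theta_i\}\,G_{\theta,i}'(\theta_i)\,\eta_{\pi_\theta}(dx,\{i\})$. Since $\eta_{\pi_\theta}\ge(1-\gamma)\rho\otimes\nu$ and $\theta_i\ge0$, the mass of $\{x<\theta_i,\ \text{env}=i\}$ is at least $(1-\gamma)\rho_{\min}\nu_{\min}$, up to an atom-free point-mass issue that Lipschitz $\rho$ handles.

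The normal cone of the box $\Theta$ gives three cases:
\begin{itemize}
\item if $\theta_i\in(0,B)$, then $|G_{\theta,i}'(\theta_i)|\le \mathcal R(\theta)/((1-\gamma)\rho_{\min}\nu_{\min})\eqqcolon\epsilon$;
\item if $\theta_i=0$, only $G'_{\theta,i}(0)\ge-\epsilon$ is controlled;
\item if $\theta_i=B$, only $G'_{\theta,i}(B)\le\epsilon$ is controlled.
\end{itemize}
This is the same quantity used in the heuristic derivation after the lemma.

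\textbf{Pointwise residual bound, exponent one.} Combining the first-order control with~\eqref{ineq: markov-G-slope-linear}, for $y\ge\theta_i$ we get $G'_{\theta,i}(y)\ge G'_{\theta,i}(\theta_i)-\delta\ge-\epsilon-\delta$, and for $y\le\theta_i$ we get $G'_{\theta,i}(y)\le\epsilon+\delta$. The residual is then bounded in two cases.
\begin{itemize}
\item If $x\le\theta_i$, the chosen action is $\theta_i$. Any competitor $y\in[x,B]$ satisfies $G(\theta_i)-G(y)\le(\epsilon+\delta)|y-\theta_i|$ by integrating the slope bounds on the relevant side.
\item If $x>\theta_i$, the action is $x$, and competitors $y\in[x,B]$ satisfy $G(x)-G(y)\le(\epsilon+\delta)(y-x)$.
\end{itemize}
Below $0$ no competitor matters because $y\ge x$. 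For $x<0$ with competitor $y<0$, note that $C_i$ and $G$ are nonincreasing on $(-\infty,0]$; I would argue this from $D\ge0$ and the backlog structure, or restrict to $y\in[0,B]$ since moving up to $0$ only helps. Hence the residual is at most $B(\epsilon+\delta)$. Since $\gamma\le1$, we have $\epsilon+\delta\le\mathcal R(\theta)/(\rho_{\min}\nu_{\min}(1-\gamma)^2)$, because $(1-\gamma)+\gamma=1$. This gives $C=B/(\rho_{\min}\nu_{\min}(1-\gamma)^2)$ with $\alpha=1$, and Theorem~\ref{thm: general-mdp-pointwise-PLK} yields~\eqref{ineq: markov-PLK-linear}.

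\textbf{Exponent two.} With~\eqref{ineq: markov-G-slope-quadratic}, for $y>\theta_i$ we have $G'(y)\ge(h+b)\mu_D(y-\theta_i)-\epsilon-\delta$, and symmetrically below $\theta_i$. Write $m\coloneqq(h+b)\mu_D$ and $t\coloneqq|y-\theta_i|$. The decrease $G(\theta_i)-G(y)$ is at most $\max_t\{(\epsilon+\delta)t-mt^2/2\}=(\epsilon+\delta)^2/(2m)$. The same bound applies when $x>\theta_i$, using $G'(x)\ge m(x-\theta_i)-(\epsilon+\delta)$ and integrating from $x$. This gives exactly $C=1/(2m\rho_{\min}^2\nu_{\min}^2(1-\gamma)^4)$ with $\alpha=2$.

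\textbf{Main obstacle.} The delicate step is the first-order control at the threshold. It requires a clean expression of $\partial_{\theta_i}l$ via~\eqref{eq: general-mdp-policy-gradient} and care with boundary cases $\theta_i\in\{0,B\}$, where the normal cone only gives one-sided bounds. I expect the one-sided bounds to still suffice: at $\theta_i=B$ the only competitors lie below $B$, and at $\theta_i=0$ only competitors above matter for $x\le0$. I would also check that at $\theta_i=0$ the region $\{x<0\}$ receives mass at least $\rho_{\min}$, which uses the Lipschitz continuity of $\rho$ so that $\rho$ has no atom at $0$. The final claim about stationary points follows because $\mathcal R(\theta)=0$ forces the objective gap to vanish.
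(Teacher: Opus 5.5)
Your proposal is correct and follows essentially the same route as the paper. You verify the pointwise condition~\eqref{condition: general-mdp-pointwise-error} by combining the threshold bound $G_{\theta,i}'(\theta_i)(u-\theta_i)\ge-\mathcal R(\theta)|u-\theta_i|/[\rho_{\min}\nu_{\min}(1-\gamma)]$ with the slope bounds of Lemma~\ref{lemma: markov-approx-convexity}, integrate on the appropriate side of $x\vee\theta_i$, and reduce competitors $y<0$ to $y=0$ (this uses $D\ge0$ and $\theta_j\ge0$, so that $G_{\theta,i}(y)-G_{\theta,i}(0)=-by\ge0$), then complete the square for exponent two; the only cosmetic difference is that you obtain the threshold bound from a three-case split on the normal cone of the box, whereas the paper packages those cases into the single inequality~\eqref{ineq: markov-residual-derivative-bounds} evaluated at $u=y$.
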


Lemma~\ref{lemma: markov-demand-smoothness} in Section~\ref{appendix: inventory} of the electronic companion establishes that $\nabla l$ is Lipschitz continuous on $\Theta$ under the first three parts of Assumption~\ref{assumption: inventory}. Let $L>0$ be a Lipschitz constant for $\nabla l$ on $\Theta$. Combining Theorem~\ref{thm: markov-demand-PLK} with Lemma~\ref{lemma: convergence rate}, projected gradient descent using exact policy gradients and step size $1/L$ attains an $\epsilon$-optimal policy in $\co(1/\epsilon)$ iterations under the assumptions of part~(i). Under the additional density lower bound and $h+b>0$, part~(ii) yields linear convergence and an iteration complexity of $\co(\log(1/\epsilon))$.

\subsection{Stochastic Cash-Balance Problem}\label{sec: cash-balance}
The stochastic cash-balance problem was originally formulated to characterize a firm's optimal cash-holding decisions for meeting transaction requirements and can also be interpreted as an inventory control problem involving rented equipment \citep{whisler1967stochastic,chen2009new}. Under the inventory interpretation, the cash-balance problem constitutes a two-sided extension of the inventory model studied in Section~\ref{subsec: single-state-inventory}. Unlike the standard inventory model, which permits only nonnegative order quantities, the cash-balance problem allows both upward and downward adjustments of the inventory level. Except for this two-sided adjustment mechanism and the associated transaction costs, the state-transition dynamics and the holding and backlogging cost structure are identical to those of the inventory model.

Given the current inventory level $x\in\R$, the decision maker selects a post-adjustment inventory level $y\in[\underline B,\bar B]$. Moving the inventory level upward or downward incurs the transaction cost
\begin{equation*}
    c(y,x)\coloneqq
    \begin{cases}
        k(y-x), & y\ge x,\\
        q(x-y), & y<x,
    \end{cases}
\end{equation*}
where $k+q\ge0$. The demands across periods are independent and identically distributed copies of a real-valued random variable $D$ with a finite mean. After the adjustment, demand $D$ is realized and the next state is $y-D$. The expected holding and backlogging cost is
\begin{equation*}
    C(y)\coloneqq \E\left[h(y-D)^+ + p(D-y)^+\right],
\end{equation*}
where $h,p\ge0$. Thus, the one-period cost is $g(x,y)=c(y,x)+C(y)$. Let $\rho$ denote the distribution of the initial state $x_0$, and assume that $\E[|x_0|]<\infty$. We use the value function, policy gradient objective, discounted state-occupancy measure, and state-action value function defined in Section~\ref{sec: problem formulation}.

Because the cash level can be adjusted in either direction, the one-sided base-stock policy in the inventory model is replaced by the two-sided base-stock policy
\begin{equation*}
    \pi_\theta(x)=(x\vee\underline\theta)\wedge\bar\theta, \qquad \Theta\coloneqq \left\{ (\underline\theta,\bar\theta): \underline B\le\underline\theta\le\bar\theta\le\bar B \right\}, \qquad \Pi_\Theta\coloneqq\{\pi_\theta:\theta\in\Theta\}.
\end{equation*}
This policy raises the cash level to $\underline\theta$ when it is too low, leaves it unchanged when it lies between the two thresholds, and lowers it to $\bar\theta$ when it is too high. The two-sided base-stock policy class contains an optimal policy for this model \citep{whisler1967stochastic,eppen1969cash}; throughout this subsection, the bounds $\underline B$ and $\bar B$ are chosen so that an optimal two-sided base-stock parameter belongs to $\Theta$.

For every $\theta\in\Theta$, define
\begin{equation}\label{eq: cash-G}
    G_\theta(y)\coloneqq C(y)+\gamma\E\left[J_{\pi_\theta}(y-D)\right], \qquad y\in[\underline B,\bar B].
\end{equation}
Then
\begin{equation}\label{eq: cash-Q-G}
    Q_{\pi_\theta}(x,y) = c(y,x)+G_\theta(y), \qquad x\in\R,\quad y\in[\underline B,\bar B],
\end{equation}
and the Bellman equation gives
\begin{equation}\label{eq: cash-bellman-J-G}
    J_{\pi_\theta}(x) = c(\pi_\theta(x),x)+G_\theta(\pi_\theta(x)).
\end{equation}
Consequently, for every $\theta'\in\Theta$,
\begin{equation}\label{eq: cash-weighted-pi-G}
    \cb(\pi_{\theta'}\mid\eta_{\pi_\theta},J_{\pi_\theta}) = \int_\R \left[ c(\pi_{\theta'}(x),x) + G_\theta(\pi_{\theta'}(x)) \right] \eta_{\pi_\theta}(dx).
\end{equation}

To establish the landscape properties of $l$, we impose the following assumptions, which are the two-sided counterparts of those used for the inventory system.

\begin{assumption}\label{assumption: cash balance}
    The following conditions hold.
    \begin{henumerate}
        \item \label{assumption: cash balance-independence}
        The initial state $x_0$ is independent of the demand process.

        \item \label{assumption: cash balance-Lipschitz}
        The cumulative distribution functions of $x_0$ and $D$ are $L_\rho$-Lipschitz continuous and $L_D$-Lipschitz continuous, respectively.

        \item \label{assumption: cash balance-exploratory-initial-distribution}
        There exists $\alpha>0$ such that
        \begin{equation*}
            \rho((-\infty,\underline B])\ge\alpha, \qquad \rho([\bar B,\infty))\ge\alpha.
        \end{equation*}

        \item \label{assumption: cash balance-density-lower-bound}
        The demand admits a density that is uniformly bounded below by $\mu_D>0$ on $[\underline B,\bar B]$.
    \end{henumerate}
\end{assumption}

Assumption~\ref{assumption: cash balance}.\ref{assumption: cash balance-independence} is the standard independence condition for the cash-balance model. Assumption~\ref{assumption: cash balance}.\ref{assumption: cash balance-Lipschitz} guarantees the differentiability needed for the Policy Gradient Theorem. Assumption~\ref{assumption: cash balance}.\ref{assumption: cash balance-exploratory-initial-distribution} ensures that both the lower and upper thresholds affect the objective with uniformly positive probability, preventing their gradients from vanishing merely because of insufficient exploration. The uniform probability lower bound $\alpha$ is used to establish the P{\L}K bounds. For example, the condition holds with $\alpha=1/2$ when the initial cash balance follows an equal mixture of uniform distributions on $[\underline B-1,\underline B]$ and $[\bar B,\bar B+1]$. Finally, Assumption~\ref{assumption: cash balance}.\ref{assumption: cash balance-density-lower-bound} provides the curvature needed for the quadratic P\L{}K bound. Under the first three parts of Assumption~\ref{assumption: cash balance}, Lemma~\ref{lemma: cash-balance-regularity} in Section~\ref{appendix: cash-balance} of the electronic companion verifies Assumptions~\ref{assumption: general-mdp-technical} and~\ref{assumption: general-mdp-regularity}. We next establish approximate convexity of $G_\theta$.

\begin{lemma}[Approximate Convexity]\label{lemma: cash-approx-convexity}
    Suppose the first three parts of Assumption~\ref{assumption: cash balance} hold. Then, for every $\theta\in\Theta$ and $\underline B\le z_1<z_2\le\bar B$,
    \begin{equation}\label{ineq: cash-G-slope-linear}
        G_\theta'(z_2)-G_\theta'(z_1) \ge -\frac{\gamma\mathcal R(\theta)}{\alpha(1-\gamma)^2}.
    \end{equation}
    In particular, if $\mathcal R(\theta)=0$, then $G_\theta$ is convex on $[\underline B,\bar B]$. If, in addition, Assumption~\ref{assumption: cash balance}.\ref{assumption: cash balance-density-lower-bound} holds, then
    \begin{equation}\label{ineq: cash-G-slope-quadratic}
        G_\theta'(z_2)-G_\theta'(z_1) \ge (h+p)\mu_D(z_2-z_1) -\frac{\gamma\mathcal R(\theta)}{\alpha(1-\gamma)^2}, \qquad \underline B\le z_1<z_2\le\bar B.
    \end{equation}
\end{lemma}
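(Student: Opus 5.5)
The plan is to follow the Bellman-propagation mechanism described after Lemma~\ref{lemma: markov-approx-convexity}, now for the two-sided policy. Define the worst slope decrease of $J_{\pi_\theta}$ over $\R$ by
\begin{equation*}
    \Delta_J\coloneqq\sup_{u<v}\left[J_{\pi_\theta}'(u)-J_{\pi_\theta}'(v)\right]^+,
\end{equation*}
and define $\Delta_G$ analogously for $G_\theta$ on $[\underline B,\bar B]$. Here derivatives are one-sided where needed, since $J_{\pi_\theta}$ is piecewise $C^1$ with kinks at most at $\underline\theta$ and $\bar\theta$.

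\textbf{Step 1: $\Delta_G\le\gamma\Delta_J$.} Differentiate \eqref{eq: cash-G} to get
\begin{equation*}
    G_\theta'(y)=C'(y)+\gamma\E\left[J_{\pi_\theta}'(y-D)\right].
\end{equation*}
This is justified by Assumption~\ref{assumption: cash balance}.\ref{assumption: cash balance-Lipschitz}, which makes the kinks of $J_{\pi_\theta}$ measure-zero under the law of $y-D$. The function $C$ is convex, so $C'$ is nondecreasing. Averaging over $D$ cannot amplify a slope decrease, so
\begin{equation*}
    G_\theta'(z_2)-G_\theta'(z_1)\ge C'(z_2)-C'(z_1)-\gamma\Delta_J.
\end{equation*}
Under Assumption~\ref{assumption: cash balance}.\ref{assumption: cash balance-density-lower-bound} we additionally have $C'(z_2)-C'(z_1)\ge(h+p)\mu_D(z_2-z_1)$, since $C''=(h+p)p_D$. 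This step yields \eqref{ineq: cash-G-slope-quadratic} once \eqref{ineq: cash-G-slope-linear} is established.

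\textbf{Step 2: slope of $J_{\pi_\theta}$ via \eqref{eq: cash-bellman-J-G}.} The slope is:
\begin{itemize}
    \item $-k$ on $(-\infty,\underline\theta)$;
    \item $G_\theta'(x)$ on $(\underline\theta,\bar\theta)$;
    \item $q$ on $(\bar\theta,\infty)$.
\end{itemize}
Within each piece the slope decrease is at most $\Delta_G$. The new decreases come from the jumps at the thresholds. At $\underline\theta$ the slope jumps from $-k$ to $G_\theta'(\underline\theta^+)$, creating a decrease of $[-k-G_\theta'(\underline\theta)]^+$. At $\bar\theta$ it jumps from $G_\theta'(\bar\theta^-)$ to $q$, creating a decrease of $[G_\theta'(\bar\theta)-q]^+$. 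A combined decrease across pieces, for instance from $u<\underline\theta$ to $v\in(\underline\theta,\bar\theta)$, is bounded by the jump plus $\Delta_G$. Across both thresholds the constant slopes satisfy $-k\le q$ because $k+q\ge0$, and chaining handles the remaining cases.

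If $\underline\theta=\bar\theta$, the middle piece is degenerate and the only jump is $-k\to q$, which is nondecreasing. So it suffices to show that each threshold deficit is at most $\mathcal R(\theta)/(\alpha(1-\gamma))$, giving
\begin{equation*}
    \Delta_J\le\Delta_G+\frac{\mathcal R(\theta)}{\alpha(1-\gamma)}.
\end{equation*}

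\textbf{Step 3 (main obstacle): link threshold deficits to $\mathcal R(\theta)$.} Use the policy gradient formula \eqref{eq: general-mdp-policy-gradient} with \eqref{eq: cash-weighted-pi-G}. Moving $\underline\theta'$ only affects states $x<\underline\theta'$, so
\begin{equation*}
    \partial_{\underline\theta}l(\theta)=\left[k+G_\theta'(\underline\theta)\right]\eta_{\pi_\theta}\left((-\infty,\underline\theta)\right).
\end{equation*}
Similarly,
\begin{equation*}
    \partial_{\bar\theta}l(\theta)=\left[G_\theta'(\bar\theta)-q\right]\eta_{\pi_\theta}\left((\bar\theta,\infty)\right).
\end{equation*}
Both occupancy masses are at least $(1-\gamma)\alpha$, by $\eta\ge(1-\gamma)\rho$ and Assumption~\ref{assumption: cash balance}.\ref{assumption: cash balance-exploratory-initial-distribution}.

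The delicate part is the normal cone. A deficit at $\underline\theta$ means $\partial_{\underline\theta}l<0$, so decreasing cost requires increasing $\underline\theta$. This direction is blocked only if $\underline\theta=\bar\theta$, and in that case the jump structure of Step 2 is absent, as noted above. If instead $\underline\theta=\underline B$ and $\partial_{\underline\theta}l<0$, then $e_1\notin$ the allowed cone direction; rather, $N_\Theta$ contains only $-e_1$-type vectors there, so the negative component survives. Hence
\begin{equation*}
    \mathcal R(\theta)\ge|\partial_{\underline\theta}l|\ge(1-\gamma)\alpha\left[-k-G_\theta'(\underline\theta)\right]^+.
\end{equation*}
I would verify this by checking the normal cone of $\Theta$ at each face: the bounds $\underline B,\bar B$ and the diagonal $\underline\theta=\bar\theta$. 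The diagonal case is the one I expect to need care, since there the normal cone mixes both coordinates.

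\textbf{Step 4: close the recursion.} Combining Steps 1 and 2 gives
\begin{equation*}
    \Delta_J\le\gamma\Delta_J+\frac{\mathcal R(\theta)}{\alpha(1-\gamma)}.
\end{equation*}
Finiteness of $\Delta_J$ follows from $|J_{\pi_\theta}'|$ being bounded by $\max\{|k|,|q|\}+\sup|G_\theta'|$, with $G_\theta'$ bounded by the model constants. Therefore
\begin{equation*}
    \Delta_J\le\frac{\mathcal R(\theta)}{\alpha(1-\gamma)^2},\qquad \Delta_G\le\gamma\Delta_J,
\end{equation*}
which is \eqref{ineq: cash-G-slope-linear}.
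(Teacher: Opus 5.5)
Your proposal is correct and follows the paper's proof essentially step for step. Your $\Delta_J$ is the paper's $-m_\theta$, Step~1 is its bootstrap inequality $G_\theta'(z_2)-G_\theta'(z_1)\ge\gamma m_\theta$, Steps~2--3 are its threshold-crossing decomposition combined with the feasible-direction bounds at $u=\bar\theta$ and $u=\underline\theta$, and Step~4 is the same rearrangement. The face-by-face normal-cone check you anticipate is unnecessary. The jumps matter only when $\underline\theta<\bar\theta$, and then $+e_1$ and $-e_2$ are feasible directions, so every $g\in N_\Theta(\theta)$ has $g_1\le 0$ and $g_2\ge 0$, which is all you need; on the diagonal, $J_{\pi_\theta}$ is convex, as you already observe.
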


Using Lemma~\ref{lemma: cash-approx-convexity}, we verify Condition~\eqref{condition: general-mdp-pointwise-error} and apply Theorem~\ref{thm: general-mdp-pointwise-PLK} to obtain the following result.

\begin{theorem}[P\L{}K Condition]\label{thm: cash-balance-PLK}
    Suppose the first three parts of Assumption~\ref{assumption: cash balance} hold. Let $\theta^*\in\Theta$ be an optimal two-sided base-stock parameter. Then the following statements hold.
    \begin{enumerate}[label=(\roman*)]
        \item For every $\theta\in\Theta$,
        \begin{equation}\label{ineq: cash-balance-PLK-linear}
            l(\theta)-l(\theta^*)\le\frac{\bar B-\underline B}{\alpha(1-\gamma)^2}\mathcal R(\theta).
        \end{equation}
        Thus, $l$ satisfies the P{\L}K condition with exponent $1$ on $\Theta$.

        \item If, in addition, Assumption~\ref{assumption: cash balance}.\ref{assumption: cash balance-density-lower-bound} holds and $h+p>0$, then, for every $\theta\in\Theta$,
        \begin{equation}\label{ineq: cash-balance-PLK}
            l(\theta)-l(\theta^*)\le\frac{1}{2(h+p)\mu_D\alpha^2(1-\gamma)^4}\mathcal R(\theta)^2.
        \end{equation}
        Thus, $l$ satisfies the P{\L}K condition with exponent $2$ on $\Theta$.
    \end{enumerate}
    In particular, part~(i) implies that every stationary point of $l$ over $\Theta$ is globally optimal over $\Theta$.
\end{theorem}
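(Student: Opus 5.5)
The plan is to verify the pointwise condition~\eqref{condition: general-mdp-pointwise-error} of Theorem~\ref{thm: general-mdp-pointwise-PLK} with explicit constants and then invoke that theorem. Lemma~\ref{lemma: cash-balance-regularity} supplies Assumptions~\ref{assumption: general-mdp-technical} and~\ref{assumption: general-mdp-regularity}, so only the Bellman residual bound needs work. Fix $\theta=(\underline\theta,\bar\theta)\in\Theta$ and write $\varepsilon\coloneqq\gamma\mathcal R(\theta)/(\alpha(1-\gamma)^2)$. By~\eqref{eq: cash-Q-G} and~\eqref{eq: cash-bellman-J-G}, for every $x$,
\begin{equation*}
J_{\pi_\theta}(x)-(TJ_{\pi_\theta})(x)=\max_{y\in[\underline B,\bar B]}\Big\{\big[c(\pi_\theta(x),x)+G_\theta(\pi_\theta(x))\big]-\big[c(y,x)+G_\theta(y)\big]\Big\}.
\end{equation*}
Hence it suffices to bound, uniformly in $x$, how much a two-sided threshold action can be improved when the objective is $c(\cdot,x)+G_\theta$. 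Here $c(\cdot,x)$ is convex and piecewise linear with slopes $-q$ and $k$, and $G_\theta$ is approximately convex by Lemma~\ref{lemma: cash-approx-convexity}.

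The second ingredient is first-order information at the thresholds. I will compute the partial derivatives of $l$ through~\eqref{eq: general-mdp-policy-gradient} and~\eqref{eq: cash-weighted-pi-G}. This gives
\begin{equation*}
\partial_{\underline\theta}l(\theta)=\big(k+G_\theta'(\underline\theta)\big)\,\eta_{\pi_\theta}\big((-\infty,\underline\theta)\big),\qquad \partial_{\bar\theta}l(\theta)=\big(-q+G_\theta'(\bar\theta)\big)\,\eta_{\pi_\theta}\big((\bar\theta,\infty)\big).
\end{equation*}
Both occupancy masses are at least $(1-\gamma)\alpha$ by Assumption~\ref{assumption: cash balance}.\ref{assumption: cash balance-exploratory-initial-distribution}. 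The normal-cone structure of $\Theta$ then converts $\mathcal R(\theta)$ into one-sided bounds.
\begin{itemize}
\item If $\underline\theta>\underline B$, then $k+G_\theta'(\underline\theta)\le \mathcal R(\theta)/(\alpha(1-\gamma))$.
\item If $\underline\theta<\bar\theta$, then $k+G_\theta'(\underline\theta)\ge -\mathcal R(\theta)/(\alpha(1-\gamma))$.
\item The symmetric bounds hold for $-q+G_\theta'(\bar\theta)$.
\end{itemize}
The coupled constraint $\underline\theta\le\bar\theta$ needs a short case check. When $\underline\theta=\bar\theta$, only the sum of the two partials is controlled in one direction. I will handle this by comparing with the approximate convexity of $G_\theta$ together with $k+q\ge0$.

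Now fix $x$ and consider the case $x\le\underline\theta$, so $\pi_\theta(x)=\underline\theta$; the cases $x\ge\bar\theta$ and $x\in[\underline\theta,\bar\theta]$ are analogous. For $y\in[\underline B,\bar B]$, the function $\phi(y)=c(y,x)+G_\theta(y)$ has derivative $\phi'(y)=k+G_\theta'(y)$ for $y>x$ and $-q+G_\theta'(y)$ for $y<x$. Combining the threshold bound at $\underline\theta$ with~\eqref{ineq: cash-G-slope-linear}, I get $\phi'(y)\ge -\varepsilon-\mathcal R(\theta)/(\alpha(1-\gamma))$ for $y>\underline\theta$. I also get $\phi'(y)\le \varepsilon+\mathcal R(\theta)/(\alpha(1-\gamma))$ for $y<\underline\theta$, where the kink at $x$ only helps because $-q\le k$. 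Integrating over an interval of length at most $\bar B-\underline B$ gives
\begin{equation*}
\phi(\underline\theta)-\phi(y)\le(\bar B-\underline B)\Big(\varepsilon+\tfrac{\mathcal R(\theta)}{\alpha(1-\gamma)}\Big)\le\tfrac{\bar B-\underline B}{\alpha(1-\gamma)^2}\mathcal R(\theta),
\end{equation*}
since $\gamma/(1-\gamma)^2+1/(1-\gamma)=1/(1-\gamma)^2$. This is~\eqref{condition: general-mdp-pointwise-error} with $\alpha=1$ and the constant of part~(i). Theorem~\ref{thm: general-mdp-pointwise-PLK} then yields~\eqref{ineq: cash-balance-PLK-linear}.

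For part~(ii), I will use~\eqref{ineq: cash-G-slope-quadratic} instead. Along the segment from $\underline\theta$ to $y$, this gives $\phi'(z)\ge \lambda|z-\underline\theta|-\delta$ in the improving direction, where $\lambda=(h+p)\mu_D$ and $\delta=\mathcal R(\theta)/(\alpha(1-\gamma)^2)$. Maximizing $\int_0^t(\delta-\lambda u)\,du$ over $t$ gives the bound $\delta^2/(2\lambda)$, which is exactly~\eqref{ineq: cash-balance-PLK}.

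I expect the main obstacle to be the bookkeeping of the first-order conditions at the boundary of $\Theta$. The delicate cases are the coincident thresholds $\underline\theta=\bar\theta$ and thresholds sitting at $\underline B$ or $\bar B$, where only one-sided derivative information is available. I must check that the missing side is never needed, because the action $y$ is confined to $[\underline B,\bar B]$. I must also check that the kinks of $c(\cdot,x)$ align with the threshold so that the sign information suffices.

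The regime $x\in(\underline\theta,\bar\theta)$ is the least clean, since there $\pi_\theta(x)=x$ is not a threshold. For it I will chain the bounds. Upward moves use that $G_\theta'(y)+k\ge G_\theta'(\underline\theta)+k-\varepsilon$. Downward moves use the bound at $\bar\theta$.
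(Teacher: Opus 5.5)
Your proposal is correct and follows essentially the same route as the paper. You bound the Bellman residual $Q_{\pi_\theta}(x,\pi_\theta(x))-\min_{y}Q_{\pi_\theta}(x,y)$ pointwise by combining the normal-cone bounds on $k+G_\theta'(\underline\theta)$ and $G_\theta'(\bar\theta)-q$ (using occupancy mass at least $(1-\gamma)\alpha$), the approximate convexity of $G_\theta$, the nonnegative kink $k+q$ at $y=x$, and integration (completing the square in part~(ii)), and then invoke Theorem~\ref{thm: general-mdp-pointwise-PLK}.

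One correction concerns the coincident case $\underline\theta=\bar\theta=a$. It is not true that only the sum of the partials is controlled there. Decreasing $\underline\theta$ and increasing $\bar\theta$ remain feasible, so your own bullets give $k+G_\theta'(a)\le\mathcal R(\theta)/(\alpha(1-\gamma))$ and $G_\theta'(a)-q\ge-\mathcal R(\theta)/(\alpha(1-\gamma))$ individually. The inequality $k+q\ge0$ alone then supplies the other two bounds, and approximate convexity plays no role at that step.
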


Lemma~\ref{lemma: cash-balance-smoothness} in Section~\ref{appendix: cash-balance} of the electronic companion establishes that $\nabla l$ is Lipschitz continuous on $\Theta$ under the first three parts of Assumption~\ref{assumption: cash balance}. Let $L>0$ be a Lipschitz constant for $\nabla l$ on $\Theta$. Combining Theorem~\ref{thm: cash-balance-PLK} with Lemma~\ref{lemma: convergence rate}, projected gradient descent using exact policy gradients and step size $1/L$ attains an $\epsilon$-optimal policy in $\co(1/\epsilon)$ iterations under the assumptions of part~(i). Under the additional density lower bound and $h+p>0$, part~(ii) yields linear convergence and an iteration complexity of $\co(\log(1/\epsilon))$.

\section{Conclusion}\label{sec: conclusion} 
We study the optimization landscape for infinite-horizon discounted MDPs with general states and actions under structured policy classes. An inventory example shows that closure under policy improvement can fail even when the policy class contains an optimal policy. We establish weaker conditions that require exact weighted policy-improvement properties only at stationary points of the policy gradient objective and use stationarity-controlled approximations of these properties to derive P{\L}K conditions for this objective when the effective concentrability coefficient is finite. A separate pointwise condition yields P{\L}K bounds without a concentrability assumption. Under the common standing assumptions and for the same policy class and parameter domain, our results encompass the corresponding exact-closure results. We verify the pointwise condition for inventory systems with Markov-modulated demand and stochastic cash-balance problems. For both models, we establish that the policy gradient objective satisfies the P{\L}K condition with exponent $1$ and, under additional curvature conditions, exponent $2$. Together with the smoothness proved under the baseline assumptions, these results give an $\co(1/\epsilon)$ iteration complexity and linear convergence, respectively, for projected gradient descent using exact policy gradients.

Our work opens several directions for future research. First, it remains open to establish the optimal sample complexity for stochastic policy gradient methods in structured operations models. Existing literature combines P{\L}K or related landscape conditions with variance reduction techniques to derive optimal rates \citep{fatkhullin2022sharp,masiha2026optimal}. Applying these variance-reduction methods to base-stock policies requires further analysis. Second, exploiting additional model structure may sharpen how the application bounds depend on the discount factor and the exploration constants. Third, it remains open whether the same landscape results extend to MDPs with nonsmooth objectives, e.g., inventory systems with discrete demands. Finally, extending our results to long-run average, constrained, and partially observed MDPs may broaden the applicability to other structured operations and control problems.

\renewcommand{\theHsection}{A\arabic{section}}
\begin{APPENDICES}
\normalsize

\section{Proofs of the General Landscape Results}\label{appendix: landscape}

Under Assumption~\ref{assumption: general-mdp-technical}, the Performance Difference Lemma \citep{kakade2002approximately}
\begin{equation} \label{eq: general-mdp-performance-difference}
    l(\theta)-l(\theta') = \int_{\cs} \left( J_{\pi_\theta}(s) - T_{\pi_{\theta'}}J_{\pi_\theta}(s) \right) \eta_{\pi_{\theta'}}(ds)
\end{equation}
holds for every $\theta,\theta'\in\Theta$. Its standard proof remains valid since Assumption~\ref{assumption: general-mdp-technical} guarantees the Bellman equations and the absolute integrability of all terms involved.

\begin{proof}{Proof of Theorem~\ref{thm: general-mdp-no-spurious}}
    Fix a stationary point $\theta$ of $l$ over $\Theta$. By \eqref{eq: general-mdp-policy-gradient},
    \begin{equation*}
        \nabla l(\theta) = \nabla_{\theta'} \cb(\pi_{\theta'} \mid \eta_{\pi_\theta},J_{\pi_\theta}) \bigg|_{\theta'=\theta}.
    \end{equation*}
    Since $\theta$ is stationary for $l$ over $\Theta$, it is also a stationary point of $\theta'\mapsto \cb(\pi_{\theta'} \mid \eta_{\pi_\theta},J_{\pi_\theta})$ over $\Theta$.
    By condition~\ref{condition: pointwise-no-spurious-pi},
    \begin{equation*}
        \cb(\pi_\theta \mid \eta_{\pi_\theta},J_{\pi_\theta}) = \min_{\theta'\in\Theta} \cb(\pi_{\theta'} \mid \eta_{\pi_\theta},J_{\pi_\theta}).
    \end{equation*}
    Combining this equality with condition~\ref{condition: pointwise-closure-pi} gives
    \begin{equation*}
        \cb(\pi_\theta \mid \eta_{\pi_\theta},J_{\pi_\theta}) = \min_{\theta'\in\Theta} \cb(\pi_{\theta'} \mid \eta_{\pi_\theta},J_{\pi_\theta}) = \min_{\pi'\in\Pi} \cb(\pi' \mid \eta_{\pi_\theta},J_{\pi_\theta}) = \int_\cs (TJ_{\pi_\theta})(s)\eta_{\pi_\theta}(ds).
    \end{equation*}
    On the other hand, since $J_{\pi_\theta}=T_{\pi_\theta}J_{\pi_\theta}$,
    \begin{equation*}
        \cb(\pi_\theta \mid \eta_{\pi_\theta},J_{\pi_\theta}) = \int_\cs (T_{\pi_\theta}J_{\pi_\theta})(s)\eta_{\pi_\theta}(ds) = \int_\cs J_{\pi_\theta}(s)\eta_{\pi_\theta}(ds).
    \end{equation*}
    Hence
    \begin{equation*}
        \int_\cs \left(J_{\pi_\theta}-TJ_{\pi_\theta}\right)(s) \eta_{\pi_\theta}(ds) = 0.
    \end{equation*}
    Since $J_{\pi_\theta}\succeq TJ_{\pi_\theta}$ and $\eta_{\pi_\theta}\succeq (1-\gamma)\rho$, we obtain
    \begin{equation*}
        0 = \int_\cs \left(J_{\pi_\theta}-TJ_{\pi_\theta}\right)d\eta_{\pi_\theta} \ge (1-\gamma) \int_\cs \left(J_{\pi_\theta}-TJ_{\pi_\theta}\right)d\rho \ge 0.
    \end{equation*}
    Therefore,
    \begin{equation*}
        \int_\cs \left(J_{\pi_\theta}-TJ_{\pi_\theta}\right)d\rho = 0.
    \end{equation*}
    Since $J_{\pi_\theta}\succeq TJ_{\pi_\theta}$, it follows that $J_{\pi_\theta}=TJ_{\pi_\theta}$ ($\rho$-almost surely). Absolute continuity of $\eta_{\pi_{\theta^*}}$ with respect to $\rho$ then implies that $J_{\pi_\theta}=TJ_{\pi_\theta}$ ($\eta_{\pi_{\theta^*}}$-almost surely).

    By the Performance Difference Lemma \eqref{eq: general-mdp-performance-difference},
    \begin{equation*}
        l(\theta)-l(\theta^*) = \int_\cs \left( J_{\pi_\theta}(s)-T_{\pi_{\theta^*}}J_{\pi_\theta}(s) \right) \eta_{\pi_{\theta^*}}(ds).
    \end{equation*}
    Since $T_{\pi_{\theta^*}}J_{\pi_\theta}\succeq TJ_{\pi_\theta}$, we have
    \begin{equation*}
        0 \le l(\theta)-l(\theta^*) \le \int_\cs \left(J_{\pi_\theta}- TJ_{\pi_\theta}\right)(s) \eta_{\pi_{\theta^*}}(ds) = 0.
    \end{equation*}
    Thus $l(\theta)=l(\theta^*)$. Since $\theta^*$ is globally optimal over $\Theta$, the proof is complete. \Halmos
\end{proof}

\begin{proof}{Proof of Theorem~\ref{thm: general-mdp-PLK}}
    Fix $\theta\in\Theta$. The minimum of $\cb(\pi'\mid\eta_{\pi_\theta},J_{\pi_\theta})$ over $\Pi$ and $\theta' \mapsto \cb(\pi_{\theta'}\mid\eta_{\pi_\theta},J_{\pi_\theta})$ over $\Theta$ are finite. By the existence of a measurable policy attaining the minimum in $TJ_{\pi_\theta}$ at every state,
    \begin{equation*}
        \begin{aligned}
            \int_\cs\left[J_{\pi_\theta}(s)-(TJ_{\pi_\theta})(s)\right]\eta_{\pi_\theta}(ds)
            &=\cb(\pi_\theta\mid\eta_{\pi_\theta},J_{\pi_\theta})-\min_{\pi'\in\Pi}\cb(\pi'\mid\eta_{\pi_\theta},J_{\pi_\theta})\\
            &=\left[\cb(\pi_\theta\mid\eta_{\pi_\theta},J_{\pi_\theta})-\min_{\theta'\in\Theta}\cb(\pi_{\theta'}\mid\eta_{\pi_\theta},J_{\pi_\theta})\right]\\
            &\qquad+\left[\min_{\theta'\in\Theta}\cb(\pi_{\theta'}\mid\eta_{\pi_\theta},J_{\pi_\theta})-\min_{\pi'\in\Pi}\cb(\pi'\mid\eta_{\pi_\theta},J_{\pi_\theta})\right]\\
            &\le(C_{\rm PI}+C_{\rm cl})\mathcal R(\theta)^\alpha,
        \end{aligned}
    \end{equation*}
    where the inequality follows from Conditions~\eqref{condition: general-mdp-approx-closure-pi} and~\eqref{condition: general-mdp-approx-no-spurious-pi}.

    By the definition of $l$,
    \begin{equation*}
        l(\theta)-l(\theta^*) =(1-\gamma)\int_\cs\left[J_{\pi_\theta}(s)-J_{\pi_{\theta^*}}(s)\right]\rho(ds).
    \end{equation*}
    Multiplying the inequality in Definition~\ref{def: effective-concentrability} by $1-\gamma$ therefore gives
    \begin{equation*}
        l(\theta)-l(\theta^*) \le\kappa_\rho\int_\cs\left[J_{\pi_\theta}(s)-(TJ_{\pi_\theta})(s)\right]\rho(ds).
    \end{equation*}
    The Bellman equation gives $J_{\pi_\theta}=T_{\pi_\theta}J_{\pi_\theta}\ge TJ_{\pi_\theta}$, so the integrand is nonnegative. Moreover, the $t=0$ term in the discounted occupancy measure implies $\eta_{\pi_\theta}\succeq(1-\gamma)\rho$. Consequently,
    \begin{equation*}
        \int_\cs\left[J_{\pi_\theta}(s)-(TJ_{\pi_\theta})(s)\right]\rho(ds) \le\frac{1}{1-\gamma}\int_\cs\left[J_{\pi_\theta}(s)-(TJ_{\pi_\theta})(s)\right]\eta_{\pi_\theta}(ds).
    \end{equation*}
    Combining these inequalities with the previously established bound on the integral under $\eta_{\pi_\theta}$ yields
    \begin{equation*}
        l(\theta)-l(\theta^*) \le\frac{\kappa_\rho(C_{\rm PI}+C_{\rm cl})}{1-\gamma}\mathcal R(\theta)^\alpha.
    \end{equation*}
    This completes the proof.\Halmos
\end{proof}

\begin{proof}{Proof of Theorem~\ref{thm: general-mdp-pointwise-PLK}}
    Fix $\theta\in\Theta$. By the Performance Difference Lemma~\eqref{eq: general-mdp-performance-difference} and the definition of the Bellman optimality operator,
    \begin{equation*}
        \begin{aligned}
            l(\theta)-l(\theta^*) =\int_\cs\left[J_{\pi_\theta}(s)-(T_{\pi_{\theta^*}}J_{\pi_\theta})(s)\right]\eta_{\pi_{\theta^*}}(ds) \le\int_\cs\left[J_{\pi_\theta}(s)-(TJ_{\pi_\theta})(s)\right]\eta_{\pi_{\theta^*}}(ds) \le C\mathcal R(\theta)^\alpha,
        \end{aligned}
    \end{equation*}
    where the last inequality follows from~\eqref{condition: general-mdp-pointwise-error} and the fact that $\eta_{\pi_{\theta^*}}$ is a probability measure. \Halmos
\end{proof}

\section{Core Inventory Analysis}\label{appendix: inventory-core}

\begin{proof}{Proof of Lemma~\ref{lemma: markov-approx-convexity}}
    Fix $\theta\in\Theta$. By Lemma~\ref{lemma: markov-demand-regularity}, $J_{\pi_\theta}(\cdot,i)$ is Lipschitz continuous on $(-\infty,B]$ and $G_{\theta,i}$ is continuously differentiable on $[0,B]$ for every $i\in\ci$. For every $i\in\ci$ and $u\in[0,B]$,
    \begin{equation*}
        \eta_{\pi_\theta}\left((-\infty,u)\times\{i\}\right) \ge (1-\gamma)\mathbb P(x_0<u,\ i_0=i) = (1-\gamma)\mathbb P(x_0<u)\nu(i),
    \end{equation*}
    where the equality follows from Assumption~\ref{assumption: inventory}.\ref{assumption: inventory-independence}. By Assumption~\ref{assumption: inventory}.\ref{assumption: exploratory-initial-distribution}, we have
    \begin{equation*}
        \mathbb P(x_0<u) \ge \mathbb P(x_0<0) = \rho(0) \ge \rho_{\min}
    \end{equation*}
    and $\nu(i)\ge\nu_{\min}$. Consequently,
    \begin{equation} \label{eq: markov-uniform-state-visitation}
        \eta_{\pi_\theta}\left((-\infty,u)\times\{i\}\right) \ge (1-\gamma)\rho_{\min}\nu_{\min}, \qquad u\in[0,B],\ i\in\ci.
    \end{equation}

    For every $i\in\ci$, $u\in[0,B]$, and $g\in N_\Theta(\theta)$, the normal-cone property implies that $g_i(u-\theta_i)\leq 0$. Hence, by \eqref{eq: general-mdp-policy-gradient},
    \begin{equation*}
        \begin{aligned}
            \eta_{\pi_\theta}\left((-\infty,\theta_i)\times\{i\}\right) G_{\theta,i}'(\theta_i)(u-\theta_i) &= \partial_{\theta_i}l(\theta)(u-\theta_i) \\
            &= \left(\partial_{\theta_i}l(\theta)+g_i\right)(u-\theta_i) -g_i(u-\theta_i) \\
            & \ge -\left\|\nabla l(\theta)+g\right\|_2|u-\theta_i|.
        \end{aligned}
    \end{equation*}
    Choosing $g\in N_\Theta(\theta)$ that minimizes $\|\nabla l(\theta)+g\|_2$ yields
    \begin{equation*}
        \eta_{\pi_\theta}\left((-\infty,\theta_i)\times\{i\}\right) G_{\theta,i}'(\theta_i)(u-\theta_i) \ge -\mathcal R(\theta)|u-\theta_i|.
    \end{equation*}
    Combining this inequality with \eqref{eq: markov-uniform-state-visitation} yields 
    \begin{equation} \label{ineq: markov-residual-derivative-bounds} 
        G_{\theta,i}'(\theta_i)(u-\theta_i) \ge -\frac{\mathcal R(\theta)} {\rho_{\min}\nu_{\min}(1-\gamma)} |u-\theta_i|, \qquad u\in[0,B],\ i\in\ci. 
    \end{equation}

    We next control the nonconvexity of $J_{\pi_\theta}$. Let $\partial_+$ and $\partial_-$ denote the right and left derivatives, respectively, and define
    \begin{equation*}
        m\coloneqq \inf_{i\in\ci}\inf_{x<y\le B} \left\{ \partial_-J_{\pi_\theta}(y,i) - \partial_+J_{\pi_\theta}(x,i) \right\}.
    \end{equation*}
    The Lipschitz continuity of $J_{\pi_\theta}(\cdot,i)$ implies that $m>-\infty$. Moreover, $m\le0$ because $J_{\pi_\theta}(x,i) = G_{\theta,i}(\theta_i)$ for $x \le \theta_i$, so $J_{\pi_\theta}(\cdot,i)$ is constant on $(-\infty,\theta_i]$.

    Since $J_{\pi_\theta}(x,j)=G_{\theta,j}(x\vee\theta_j)$, the function $J_{\pi_\theta}(\cdot,j)$ is differentiable except at $\theta_j$. Because $P_D(\cdot\mid i)$ admits a density, the event $z-D=\theta_j$ has zero measure for every $z\in[0,B]$. The boundedness of the one-sided derivatives therefore allows differentiation under the expectation. Thus, for every $i\in\ci$ and $0\le z_1<z_2\le B$,
    \begin{equation} \label{ineq: markov-G-slope-bootstrap}
        G_{\theta,i}'(z_2)-G_{\theta,i}'(z_1) = C_i'(z_2)-C_i'(z_1) + \gamma\sum_{j\in\ci}p_{ij} \mathbb E_{D\sim P_D(\cdot\mid i)} \left[ \partial_-J_{\pi_\theta}(z_2-D,j) - \partial_+J_{\pi_\theta}(z_1-D,j) \right] \ge \gamma m,
    \end{equation}
    where the inequality follows from the convexity of $C_i$ and the definition of $m$.

    Fix $i\in\ci$ and $x<y\le B$. Since $J_{\pi_\theta}(z,i) = G_{\theta,i}(z\vee\theta_i)$, we have
    \begin{equation*}
        \partial_-J_{\pi_\theta}(y,i) - \partial_+J_{\pi_\theta}(x,i) = G_{\theta,i}'(y\vee\theta_i) - G_{\theta,i}'(x\vee\theta_i) + \mathbf 1_{\{x<\theta_i<y\}} G_{\theta,i}'(\theta_i).
    \end{equation*}
    If $x\vee\theta_i<y\vee\theta_i$, the first difference on the right-hand side is bounded below by $\gamma m$ by \eqref{ineq: markov-G-slope-bootstrap}. If the two arguments coincide, the difference is zero, which is also bounded below by $\gamma m$ because $m\le0$. If $x<\theta_i<y$, then \eqref{ineq: markov-residual-derivative-bounds} with $u=y$ gives
    \begin{equation*}
        G_{\theta,i}'(\theta_i) \ge -\frac{\mathcal R(\theta)} {\rho_{\min}\nu_{\min}(1-\gamma)}.
    \end{equation*}
    Therefore,
    \begin{equation*}
        \partial_-J_{\pi_\theta}(y,i) - \partial_+J_{\pi_\theta}(x,i) \ge \gamma m - \frac{\mathcal R(\theta)} {\rho_{\min}\nu_{\min}(1-\gamma)}.
    \end{equation*}
    Taking the infimum over $i$, $x$, and $y$ gives
    \begin{equation*}
        m \ge \gamma m - \frac{\mathcal R(\theta)} {\rho_{\min}\nu_{\min}(1-\gamma)},
    \end{equation*}
    and hence
    \begin{equation} \label{ineq: markov-approx-convexity}
        \partial_-J_{\pi_\theta}(y,i) - \partial_+J_{\pi_\theta}(x,i) \ge -\frac{\mathcal R(\theta)} {\rho_{\min}\nu_{\min}(1-\gamma)^2}, \qquad i\in\ci,\quad x<y\le B.
    \end{equation}

    Using \eqref{ineq: markov-approx-convexity} in the derivative representation of $G_{\theta,i}$ and using the convexity of $C_i$, we obtain
    \begin{equation*}
        G_{\theta,i}'(z_2)-G_{\theta,i}'(z_1) \ge -\frac{\gamma\mathcal R(\theta)} {\rho_{\min}\nu_{\min}(1-\gamma)^2}, \qquad 0\le z_1<z_2\le B.
    \end{equation*}
    In particular, if $\mathcal R(\theta)=0$, then $G_{\theta,i}'$ is nondecreasing on $[0,B]$, so $G_{\theta,i}$ is convex on $[0,B]$ for every $i\in\ci$.

    If Assumption~\ref{assumption: inventory}.\ref{assumption: inventory-density-lower-bound} holds, then
    \begin{equation*}
        C_i'(z_2)-C_i'(z_1) = (h+b)\left(P_D(z_2\mid i)-P_D(z_1\mid i)\right) \ge (h+b)\mu_D(z_2-z_1).
    \end{equation*}
    Together with \eqref{ineq: markov-approx-convexity}, this gives
    \begin{equation*}
        G_{\theta,i}'(z_2)-G_{\theta,i}'(z_1) \ge (h+b)\mu_D(z_2-z_1) - \frac{\gamma\mathcal R(\theta)} {\rho_{\min}\nu_{\min}(1-\gamma)^2}, \qquad 0\le z_1<z_2\le B.
    \end{equation*}
    This completes the proof. \Halmos
\end{proof}

\begin{proof}{Proof of Theorem~\ref{thm: markov-demand-PLK}}
    Fix $\theta\in\Theta$. By Lemma~\ref{lemma: markov-demand-regularity}, Assumptions~\ref{assumption: general-mdp-technical} and~\ref{assumption: general-mdp-regularity} hold.

    We first prove part~(i). Fix $i\in\ci$, $x\le B$, and a feasible action $y\in[x,B]$. If $y<0$, then $x\le y<0$, so $0$ is also feasible. Since demand is nonnegative and $\theta_j\ge0$ for every $j\in\ci$,
    \begin{equation*}
        G_{\theta,i}(y)-G_{\theta,i}(0)=-by\ge0.
    \end{equation*}
    Thus, any upper bound on $G_{\theta,i}(x\vee\theta_i)-G_{\theta,i}(0)$ also bounds $G_{\theta,i}(x\vee\theta_i)-G_{\theta,i}(y)$, and it suffices to consider $y\in[x,B]\cap[0,B]$.

    If $y>x\vee\theta_i$, then~\eqref{ineq: markov-residual-derivative-bounds} with $u=y$ and~\eqref{ineq: markov-G-slope-linear} imply that, for every $z\in[x\vee\theta_i,y]$,
    \begin{equation*}
        G_{\theta,i}'(z)\ge G_{\theta,i}'(\theta_i)-\frac{\gamma\mathcal R(\theta)}{\rho_{\min}\nu_{\min}(1-\gamma)^2}\ge-\frac{\mathcal R(\theta)}{\rho_{\min}\nu_{\min}(1-\gamma)^2}.
    \end{equation*}
    If $y<x\vee\theta_i$, then feasibility gives $y\ge x$ and hence $x\vee\theta_i=\theta_i$. In this case,~\eqref{ineq: markov-residual-derivative-bounds} with $u=y$ and~\eqref{ineq: markov-G-slope-linear} imply that, for every $z\in[y,\theta_i]$,
    \begin{equation*}
        G_{\theta,i}'(z)\le G_{\theta,i}'(\theta_i)+\frac{\gamma\mathcal R(\theta)}{\rho_{\min}\nu_{\min}(1-\gamma)^2}\le\frac{\mathcal R(\theta)}{\rho_{\min}\nu_{\min}(1-\gamma)^2}.
    \end{equation*}
    Integrating in the two cases and using $|y-(x\vee\theta_i)|\le B$ gives
    \begin{equation}\label{ineq: markov-one-step-linear}
        G_{\theta,i}(x\vee\theta_i)-G_{\theta,i}(y)\le\frac{B}{\rho_{\min}\nu_{\min}(1-\gamma)^2}\mathcal R(\theta).
    \end{equation}
    The same inequality holds when $y=x\vee\theta_i$ and, by the preceding reduction, when $y<0$. Therefore,~\eqref{ineq: markov-one-step-linear} holds at every state and for every feasible action. By the Bellman equation,
    \begin{equation*}
        \begin{aligned}
            J_{\pi_\theta}(x,i)-(TJ_{\pi_\theta})(x,i)
            &=G_{\theta,i}(x\vee\theta_i)-\min_{y\in[x,B]}G_{\theta,i}(y) \le\frac{B}{\rho_{\min}\nu_{\min}(1-\gamma)^2}\mathcal R(\theta), \qquad \forall(x,i)\in\cs.
        \end{aligned}
    \end{equation*}
    Since $\theta$ was arbitrary, Condition~\eqref{condition: general-mdp-pointwise-error} holds with $\alpha=1$ and $C=B/[\rho_{\min}\nu_{\min}(1-\gamma)^2]$. Theorem~\ref{thm: general-mdp-pointwise-PLK} gives~\eqref{ineq: markov-PLK-linear}. At any stationary point $\theta$ of $l$ over $\Theta$, $\mathcal R(\theta)=0$, so~\eqref{ineq: markov-PLK-linear} gives $l(\theta)=l(\theta^*)$.

    We next prove part~(ii). Suppose Assumption~\ref{assumption: inventory}.\ref{assumption: inventory-density-lower-bound} holds and $h+b>0$. Fix $i\in\ci$, $x\le B$, and a feasible action $y\in[x,B]$. As in part~(i), we first consider $y\in[0,B]$.

    If $y>x\vee\theta_i$, then~\eqref{ineq: markov-residual-derivative-bounds} with $u=y$ and~\eqref{ineq: markov-G-slope-quadratic} imply that, for every $z\in[x\vee\theta_i,y]$,
    \begin{equation*}
        \begin{aligned}
            G_{\theta,i}'(z)
            &\ge G_{\theta,i}'(\theta_i)+(h+b)\mu_D(z-\theta_i)-\frac{\gamma\mathcal R(\theta)}{\rho_{\min}\nu_{\min}(1-\gamma)^2}\\
            &\ge-\frac{\mathcal R(\theta)}{\rho_{\min}\nu_{\min}(1-\gamma)^2}+(h+b)\mu_D\left(z-(x\vee\theta_i)\right),
        \end{aligned}
    \end{equation*}
    where the last inequality uses $x\vee\theta_i\ge\theta_i$. If $y<x\vee\theta_i$, then $x\vee\theta_i=\theta_i$, and~\eqref{ineq: markov-residual-derivative-bounds} with $u=y$ and~\eqref{ineq: markov-G-slope-quadratic} imply that, for every $z\in[y,\theta_i]$,
    \begin{equation*}
        \begin{aligned}
            G_{\theta,i}'(z)
            &\le G_{\theta,i}'(\theta_i)-(h+b)\mu_D(\theta_i-z)+\frac{\gamma\mathcal R(\theta)}{\rho_{\min}\nu_{\min}(1-\gamma)^2} \le\frac{\mathcal R(\theta)}{\rho_{\min}\nu_{\min}(1-\gamma)^2}-(h+b)\mu_D(\theta_i-z).
        \end{aligned}
    \end{equation*}
    Integrating in the two cases and completing the square give
    \begin{equation*}
        \begin{aligned}
            G_{\theta,i}(y)-G_{\theta,i}(x\vee\theta_i) & \ge-\frac{\mathcal R(\theta)}{\rho_{\min}\nu_{\min}(1-\gamma)^2}\left|y-(x\vee\theta_i)\right|+\frac{(h+b)\mu_D}{2}\left|y-(x\vee\theta_i)\right|^2\\
            & \ge-\frac{\mathcal R(\theta)^2}{2(h+b)\mu_D\rho_{\min}^2\nu_{\min}^2(1-\gamma)^4}.
        \end{aligned}
    \end{equation*}
    The bound also holds when $y=x\vee\theta_i$. If $y<0$, then $G_{\theta,i}(y)\ge G_{\theta,i}(0)$, so applying the bound with $y=0$ gives the same conclusion. Hence, at every state and for every feasible action,
    \begin{equation}\label{ineq: markov-one-step-quadratic}
        G_{\theta,i}(x\vee\theta_i)-G_{\theta,i}(y)\le\frac{\mathcal R(\theta)^2}{2(h+b)\mu_D\rho_{\min}^2\nu_{\min}^2(1-\gamma)^4}.
    \end{equation}
    By the Bellman equation,
    \begin{equation*}
        \begin{aligned}
            J_{\pi_\theta}(x,i)-(TJ_{\pi_\theta})(x,i)
            &=G_{\theta,i}(x\vee\theta_i)-\min_{y\in[x,B]}G_{\theta,i}(y) \le\frac{\mathcal R(\theta)^2}{2(h+b)\mu_D\rho_{\min}^2\nu_{\min}^2(1-\gamma)^4}, \qquad \forall(x,i)\in\cs.
        \end{aligned}
    \end{equation*}
    Thus, Condition~\eqref{condition: general-mdp-pointwise-error} holds with $\alpha=2$ and $C=1/[2(h+b)\mu_D\rho_{\min}^2\nu_{\min}^2(1-\gamma)^4]$. Theorem~\ref{thm: general-mdp-pointwise-PLK} gives~\eqref{ineq: markov-PLK}. \Halmos
\end{proof}

\end{APPENDICES}

\ACKNOWLEDGMENT{During the preparation of this manuscript, the authors used ChatGPT Pro to develop examples, assist with the analysis, review mathematical arguments, and refine the exposition. The authors independently verified all results.}

\bibliographystyle{informs2014}
\bibliography{ref}

\setcounter{nowappendix}{0}
\ECSwitch

\renewcommand{\theHsection}{EC.\arabic{section}}
\renewcommand{\theHequation}{EC.\arabic{equation}}

\providecommand{\theHlemma}{}
\renewcommand{\theHlemma}{EC.\arabic{lemma}}

\ECHead{Electronic Companion to ``Benign Nonconvex Landscape for Policy Optimization: Infinite-Horizon Discounted MDPs with General State and Action Spaces''}

\section{Omitted Proofs in Section~\ref{sec: PLK condition}} \label{appendix: PLK condition}

\begin{proof}{Proof of Lemma~\ref{lemma: gradient dominance and PLK}}
    Let $f^*\coloneqq\min_{y\in\cx}f(y)$. Fix $x\in\cx$ and $g\in N_\cx(x)$. Since $\langle g,x-y\rangle\ge0$ for every $y\in\cx$, gradient dominance gives
    \begin{equation*}
        \begin{aligned}
            f(x)-f^*
            &\le \sup_{y\in\cx}\left\{c\langle\nabla f(x),x-y\rangle-\frac{\mu}{2}\|x-y\|_2^2\right\}\\
            &\le \sup_{y\in\cx}\left\{c\langle\nabla f(x)+g,x-y\rangle-\frac{\mu}{2}\|x-y\|_2^2\right\}\\
            &\le \sup_{y\in\cx}\left\{c\|\nabla f(x)+g\|_2\|x-y\|_2-\frac{\mu}{2}\|x-y\|_2^2\right\},
        \end{aligned}
    \end{equation*}
    where the last inequality follows from the Cauchy--Schwarz inequality.

    Suppose first that $\mu>0$. For every $y\in\cx$,
    \begin{equation*}
        \begin{aligned}
            c\|\nabla f(x)+g\|_2\|x-y\|_2-\frac{\mu}{2}\|x-y\|_2^2
            &= \frac{c^2}{2\mu}\|\nabla f(x)+g\|_2^2 -\frac{\mu}{2}\left(\|x-y\|_2-\frac{c}{\mu}\|\nabla f(x)+g\|_2\right)^2\\
            &\le \frac{c^2}{2\mu}\|\nabla f(x)+g\|_2^2.
        \end{aligned}
    \end{equation*}
    Since the upper bound is independent of $y$, taking the supremum over $y\in\cx$ and then the minimum over $g\in N_\cx(x)$ gives
    \begin{equation*}
        f(x)-f^*\le\frac{c^2}{2\mu}\min_{g\in N_\cx(x)}\|\nabla f(x)+g\|_2^2,
    \end{equation*}
    which proves the P{\L}K condition with exponent $2$ and constant $\mu/c^2$.

    Suppose next that $\mu=0$ and $\cx$ has diameter at most $R$. Since $\|x-y\|_2\le R$ for every $y\in\cx$, the same bound yields
    \begin{equation*}
        f(x)-f^*\le cR\|\nabla f(x)+g\|_2.
    \end{equation*}
    Taking the minimum over $g\in N_\cx(x)$ gives
    \begin{equation*}
        f(x)-f^*\le cR\min_{g\in N_\cx(x)}\|\nabla f(x)+g\|_2,
    \end{equation*}
    which proves the P{\L}K condition with exponent $1$ and constant $1/(2Rc)$. \Halmos
\end{proof}

\begin{proof}{Proof of Lemma~\ref{lemma: convergence rate}}
    Let $\Delta_k=f(x_k)-f^*$. With the stepsize $\gamma_k=1/L$, the projection optimality condition gives
    \begin{equation*}
        \left\langle x_{k+1}-x_k+\frac{1}{L}\nabla f(x_k), x-x_{k+1}\right\rangle \ge 0, \qquad \forall x\in\cx .
    \end{equation*}
    Taking $x=x_k$, we obtain
    \begin{equation*}
        \left\langle \nabla f(x_k), x_{k+1}-x_k \right\rangle
        \le -L\|x_{k+1}-x_k\|_2^2 .
    \end{equation*}
    Therefore, by the $L$-smoothness of $f$,
    \begin{equation}\label{ineq: pgd descent}
        f(x_{k+1})-f(x_k) \le -\frac{L}{2}\|x_{k+1}-x_k\|_2^2 .
    \end{equation}

    The same projection optimality condition also implies
    \begin{equation*}
        L(x_k-x_{k+1})-\nabla f(x_k) \in N_\cx(x_{k+1}) .
    \end{equation*}
    Hence,
    \begin{equation}\label{ineq: pgd residual}
        \min_{g\in N_\cx(x_{k+1})} \left\| \nabla f(x_{k+1})+g \right\|_2 \le \left\|L(x_k-x_{k+1})+\nabla f(x_{k+1})-\nabla f(x_k)\right\|_2 \le 2L\|x_{k+1}-x_k\|_2 ,
    \end{equation}
    where the last inequality follows from the $L$-Lipschitz continuity of $\nabla f$.

    We first consider the case where the P{\L}K exponent is $1$. By the P{\L}K condition and \eqref{ineq: pgd residual},
    \begin{equation*}
        \Delta_{k+1} \le \frac{L}{\mu}\|x_{k+1}-x_k\|_2 .
    \end{equation*}
    Combining this inequality with \eqref{ineq: pgd descent} yields
    \begin{equation}\label{ineq: recursion alpha one}
        \Delta_k-\Delta_{k+1} \ge \frac{\mu^2}{2L}\Delta_{k+1}^2.
    \end{equation}
    If $\Delta_j=0$ for some $j$, then the desired bound holds trivially for all $k\ge j$. Otherwise, all reciprocals below are well-defined. From \eqref{ineq: recursion alpha one},
    \begin{equation*}
        \Delta_k \ge \Delta_{k+1}\left(1+\frac{\mu^2}{2L}\Delta_{k+1}\right),
    \end{equation*}
    and therefore
    \begin{equation*}
        \frac{1}{\Delta_{k+1}}-\frac{1}{\Delta_k} \ge \frac{\mu^2/(2L)}{1+\mu^2\Delta_{k+1}/(2L)} \ge \frac{\mu^2}{2L+\mu^2\Delta_0},
    \end{equation*}
    where the last inequality uses $\Delta_{k+1}\le \Delta_0$. Summing this inequality from $0$ to $k-1$ gives
    \begin{equation*}
        \Delta_k \le \left( \frac{1}{\Delta_0} + \frac{k\mu^2} {2L+\mu^2\Delta_0} \right)^{-1} \le \frac{2L/\mu^2+\Delta_0}{k+1}.
    \end{equation*}
    This proves part \ref{alg: PGD 1-KL}.

    We next consider the case where the P{\L}K exponent is $2$. By the P{\L}K condition and \eqref{ineq: pgd residual},
    \begin{equation*}
        \Delta_{k+1} \le \frac{1}{2\mu}\left(2L\|x_{k+1}-x_k\|_2\right)^2 = \frac{2L^2}{\mu}\|x_{k+1}-x_k\|_2^2 .
    \end{equation*}
    Combining this inequality with \eqref{ineq: pgd descent} gives
    \begin{equation*}
        \Delta_k-\Delta_{k+1} \ge \frac{L}{2}\|x_{k+1}-x_k\|_2^2 \ge \frac{\mu}{4L}\Delta_{k+1}.
    \end{equation*}
    Thus,
    \begin{equation*}
        \Delta_{k+1} \le \frac{4L}{4L+\mu}\Delta_k = \left(1-\frac{\mu}{4L+\mu}\right)\Delta_k .
    \end{equation*}
    Iterating the above inequality yields
    \begin{equation*}
        f(x_k)-f^* \le \left(1-\frac{\mu}{4L+\mu}\right)^k \left[f(x_0)-f^*\right].
    \end{equation*}
    This proves part \ref{alg: PGD 2-KL}. \Halmos
\end{proof}

\section{Technical Proofs for the Applications}\label{appendix: example}

\subsection{Inventory System with Markov-Modulated Demand}\label{appendix: inventory}

\begin{lemma} \label{lemma: markov-demand-regularity}
    Suppose the first three parts of Assumption~\ref{assumption: inventory} hold. Then the inventory model with the state-dependent base-stock policy class $\Pi_\Theta$ and initial distribution $\rho\otimes\nu$ satisfies Assumptions~\ref{assumption: general-mdp-technical} and~\ref{assumption: general-mdp-regularity}.
\end{lemma}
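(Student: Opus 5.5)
The plan is to verify each item of Assumptions~\ref{assumption: general-mdp-technical} and~\ref{assumption: general-mdp-regularity} directly. The main tools are the explicit Bellman representation $J_{\pi_\theta}(x,i)=G_{\theta,i}(x\vee\theta_i)$ and a contraction argument on a weighted function space.

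\textbf{Step 1: a uniform growth bound.} Every state-dependent base-stock policy with $\theta\in[0,B]^{|\ci|}$ keeps post-order inventory in $[x\wedge 0,B]$. From $x$, the post-order levels satisfy $y_t\ge x-\sum_{s<t}D_s$ and $y_t\le B$. Each $C_i$ is Lipschitz with constant $h+b$, and the conditional demand means $\bar d_i$ are finite. Hence
\[
|J_{\pi_\theta}(x,i)|\le K(1+|x|)
\]
with $K$ depending only on $h,b,B,\gamma,\max_i\bar d_i$. The same bound holds for $J_\pi$ of any feasible policy that stays above $x$ minus cumulative demand.

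\textbf{Step 2: measurability and the Bellman objects.} Measurability of $\Gamma$, $g$, and the kernel is immediate, since $\Gamma=\{(x,i,y):x\le y\le B\}$ and the kernel is built from $P_D(\cdot\mid i)$ and $p$. With the bound from Step 1, $J_{\pi_\theta}$ is the unique fixed point of $T_{\pi_\theta}$ in the space weighted by $1+|x|$. It is obtained as the limit of measurable iterates, so it is measurable. Lipschitz continuity of $J_{\pi_\theta}(\cdot,i)$ follows by the same induction, since $C_i$ is Lipschitz and $x\mapsto x\vee\theta_i$ is $1$-Lipschitz. Then $G_{\theta,i}$ and $Q_{\pi_\theta}((x,i),y)=G_{\theta,i}(y)$ are continuous in $y$. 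Therefore
\[
TJ_{\pi_\theta}(x,i)=\min_{y\in[x\vee 0,B]}G_{\theta,i}(y).
\]
Restricting to $y\ge 0$ is justified by $G_{\theta,i}(y)-G_{\theta,i}(0)=-by$ for $y<0$. This minimum is attained on a compact interval. A measurable minimizer exists by a measurable selection theorem, or explicitly by taking the smallest minimizer of the continuous function $G_{\theta,i}$ over $[x\vee0,B]$.

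\textbf{Step 3: integrability.} Under any of these policies, $\E|x_t|\le \E|x_0|+B+t\max_i\bar d_i$. Summing against $\gamma^t$ shows that $\eta_{\pi_{\bar\theta}}$ has a finite first moment. Combined with the linear growth bounds on $J_{\pi_\theta}$, $T_{\pi_{\theta'}}J_{\pi_\theta}$, $TJ_{\pi_\theta}$, and $T_{\pi'}J_{\pi_\theta}$ (feasible actions lie in $[x,B]$), this gives absolute integrability. Assumption~\ref{assumption: general-mdp-regularity}.\ref{assumption: optimal policy} follows from \citet{song1993inventory}.

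\textbf{Step 4: differentiability (the main obstacle).} We need the map
\[
(\theta',\bar\theta)\mapsto \int G_{\theta,i}(x\vee\theta'_i)\,\eta_{\pi_{\bar\theta}}(dx,di)
\]
to extend to a $C^1$ function. The plan has three parts.
\begin{itemize}
\item[(a)] The $\theta'$-dependence is handled first. Differentiating gives $\eta_{\pi_{\bar\theta}}((-\infty,\theta'_i)\times\{i\})\,G_{\theta,i}'(\theta'_i)$. For this I need $G_{\theta,i}\in C^1$ and the occupancy CDF to be continuous. The $C^1$ property comes from differentiating under the expectation: $J(\cdot,j)$ is Lipschitz and differentiable off $\{\theta_j\}$, and $D$ has a density. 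Continuity of the occupancy CDF follows from the Lipschitz CDFs of $x_0$ and $D$ (Assumption~\ref{assumption: inventory}.\ref{assumption: inventory-lipschitz}), since $x_{t+1}=y_t-D_t$ convolves with a Lipschitz CDF.
\item[(b)] The $\bar\theta$-dependence is harder. I would write $\eta_{\pi_{\bar\theta}}$ as the Neumann series
\[
\eta_{\pi_{\bar\theta}}=(1-\gamma)\sum_t\gamma^t\mu_0 P_{\bar\theta}^t .
\]
The key structural fact is that the pre-order state distribution at $t\ge1$ is a convolution of the post-order law with $P_D$. The post-order law is the pre-order law with the mass below $\bar\theta_i$ collapsed to the atom $\bar\theta_i$. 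So the dependence on $\bar\theta$ enters only through the shifted densities $p_D(\bar\theta_i-\cdot\mid i)$ weighted by Lipschitz CDF masses. Differentiating term by term yields derivatives bounded by $C\,t\,\gamma^t$ (using $L_D$, $L_\rho$), which are summable.
\item[(c)] Continuity of these derivatives in $(\theta',\bar\theta)$ then follows by dominated convergence. To obtain an extension to an open set, I would define the same formulas for $\theta$ slightly outside $[0,B]$ (capping actions at $B$) and check that the argument is unchanged.
\end{itemize}

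I expect part (b), together with uniform domination over the infinite sum, to be the most delicate step. If needed, I would first try the cruder route of showing Lipschitz continuity of $\bar\theta\mapsto\eta_{\pi_{\bar\theta}}$ in a bounded-Lipschitz dual norm, and then upgrading it to $C^1$.
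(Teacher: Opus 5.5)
Your Steps~1--3 are fine and agree with the paper in substance. They can be simplified: since $\theta_i\ge0$ and $x\le B$, the post-order level $x\vee\theta_i$ always lies in $[0,B]$. Hence $J_{\pi_\theta}$ is bounded and, by coupling, $\max\{h,b\}/(1-\gamma)$-Lipschitz, so you do not need the weighted contraction. With weight $1+|x|$ that operator need not even be a contraction.

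\textbf{Gap 1: the extension in Step~4(c) fails.} Step~4 is the real content of the lemma, and the gap is there. If actions are capped at $B$, then for $\theta'_k>B$ the integrand on $\{i=k\}$ is the constant $G_{\theta,k}(B)$. So the right partial derivative at $\theta'_k=B$ is $0$. The left one is $\eta_{\pi_{\bar\theta}}((-\infty,B)\times\{k\})\,G'_{\theta,k}(B)$. Its first factor is at least $(1-\gamma)\rho_{\min}\nu_{\min}>0$, and its second is generally nonzero. The same kink appears in $\bar\theta_k$, so the capped map is not $C^1$ on any open set containing $\Theta\times\Theta$. The paper avoids this by extending $G_{\theta,i}$ affinely beyond $B$ with slope $G'_{\theta,i}(B)$. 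It then takes as the extension the uncapped series $(1-\gamma)\sum_t\gamma^t\E[G_{\theta,i_t}(x_t(\bar\theta)\vee\theta'_{i_t})]$, with $x_{t+1}(\bar\theta)=x_t(\bar\theta)\vee\bar\theta_{i_t}-D_t$ and $\theta',\bar\theta\in(-B,2B)^{|\ci|}$. This is a formula, not a feasible policy.

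\textbf{Gap 2: the mechanism in Step~4(b) is not available.} Your formula in (a) is the right one. But differentiating the shifted densities $p_D(\bar\theta_i-\cdot\mid i)$ in $\bar\theta_i$ requires $p_D$ to be differentiable. Assumption~\ref{assumption: inventory}.\ref{assumption: inventory-lipschitz} gives only a bounded density, and uniform demand already has a discontinuous one. The constants $L_D,L_\rho$ bound densities, not their derivatives.

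The derivative must instead fall on the Lipschitz test function. For atomless $x$ and $C^1$ $\psi$, one has $\partial_{\bar\theta}\E[\psi(x\vee\bar\theta)]=\mathbb P(x<\bar\theta)\,\psi'(\bar\theta)$. Here the density terms from the atom mass and from the moving integration limit cancel. Carrying this through $t$ compositions, with joint continuity of the resulting derivatives in $(\theta',\bar\theta)$, is exactly what your sketch leaves open. Lipschitz continuity in a bounded-Lipschitz dual norm does not upgrade to $C^1$ by itself.

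\textbf{How the paper closes it.} The paper argues pathwise with common random numbers.
\begin{itemize}
\item Off the null events $\{x_s(\bar\theta)=\bar\theta_{i_s}\}$ for $s<t$ and $\{x_t(\bar\theta)=\theta'_{i_t}\}$, the recursion gives $\partial_{\bar\theta_k}x_t(\bar\theta)\in\{0,1\}$.
\item The summand derivatives are then $\mathbf 1_{\{i_t=k,\,x_t<\theta'_k\}}G'_{\theta,k}(\theta'_k)$ and $\mathbf 1_{\{x_t>\theta'_{i_t}\}}G'_{\theta,i_t}(x_t)\,\partial_{\bar\theta_k}x_t$. Both are bounded by $\max\{h,b\}/(1-\gamma)$ uniformly in $t$, with no factor $t$.
\item Dominated convergence justifies differentiating under $\E$.
\item Joint continuity holds because, along any convergent parameter sequence, the finitely many strict comparisons up to time $t$ are eventually unchanged almost surely, and $G'_{\theta,i}$ is continuous.
\end{itemize}
Continuity of $G'_{\theta,i}$ comes from writing $\E[\partial_xJ_{\pi_\theta}(y-D,j)]=\int\partial_xJ_{\pi_\theta}(x,j)\,p_D(y-x\mid i)\,dx$ and using continuity of translations in $L^1$. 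Your justification via differentiability of $J_{\pi_\theta}(\cdot,j)$ off $\{\theta_j\}$ presupposes that $G_{\theta,j}$ is differentiable, which is circular. Use a.e.\ differentiability of the Lipschitz function instead.
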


\begin{proof}{Proof of Lemma~\ref{lemma: markov-demand-regularity}}
    We verify the two parts of Assumption~\ref{assumption: general-mdp-technical} and the two parts of Assumption~\ref{assumption: general-mdp-regularity}.

    \noindent \textbf{Assumption~\ref{assumption: general-mdp-technical}.\ref{assumption: general-mdp-measurability}:} The state space $\cs=(-\infty,B]\times\ci$, the action space $\ca=(-\infty,B]$, and the feasible state-action set $\Gamma=\{((x,i),y)\in\cs\times\ca:x\le y\le B\}$ are Borel sets. For each $i\in\ci$, the finite conditional demand mean ensures that $C_i$ is real-valued, and the holding and backlogging cost implies that $C_i$ is $\max\{h,b\}$-Lipschitz continuous. Thus, $g((x,i),y)=C_i(y)$ is measurable on $\Gamma$. For every Borel set $\cm\subseteq\cs$,
    \begin{equation*}
        P(\cm\mid(x,i),y)=\sum_{j\in\ci}p_{ij}\mathbb E_{D\sim P_D(\cdot\mid i)}\left[\mathbf 1_{\cm}(y-D,j)\right].
    \end{equation*}
    Since demand is nonnegative, this defines a probability measure on $\cs$ for every feasible state-action pair. Moreover, the mapping $(y,d)\mapsto(y-d,j)$ is continuous for each $j$, so $((x,i),y)\mapsto P(\cm\mid(x,i),y)$ is measurable. Hence, $P$ is a stochastic kernel on $\cs$ given $\Gamma$.
    
    Fix $\theta\in\Theta$. The policy $\pi_\theta(x,i)=x\vee\theta_i$ is measurable and feasible. Since its post-order inventory always belongs to $[0,B]$,
    \begin{equation*}
        0\le J_{\pi_\theta}(x,i)\le\frac{1}{1-\gamma}\max_{j\in\ci}\sup_{y\in[0,B]}C_j(y)<\infty, \qquad (x,i)\in\cs.
    \end{equation*}
    Couple two inventory trajectories starting from $(x,i)$ and $(x',i)$ under $\pi_\theta$ using the same environment process and demand sequence. The recursion $x_{t+1}=x_t\vee\theta_{i_t}-D_t$ implies that $|x_t-x_t'|\le|x-x'|$ for every $t$. The Lipschitz continuity of $C_i$ therefore gives
    \begin{equation*}
        \left|J_{\pi_\theta}(x,i)-J_{\pi_\theta}(x',i)\right|\le\max\{h,b\}\sum_{t=0}^{\infty}\gamma^t|x-x'|=\frac{\max\{h,b\}}{1-\gamma}|x-x'|.
    \end{equation*}
    Thus, $J_{\pi_\theta}(\cdot,i)$ is Lipschitz continuous for every $i\in\ci$, and $J_{\pi_\theta}$ is real-valued and measurable on $\cs$. Conditioning on the first transition yields $J_{\pi_\theta}=T_{\pi_\theta}J_{\pi_\theta}$.
    
    The boundedness and continuity of $J_{\pi_\theta}$ imply, by dominated convergence, that $G_{\theta,i}$ is real-valued and continuous on $(-\infty,B]$ for every $i\in\ci$. Consequently, $Q_{\pi_\theta}((x,i),y)=G_{\theta,i}(y)$ is real-valued and measurable on $\Gamma$. Since $[x,B]$ is nonempty and compact, $G_{\theta,i}$ attains its minimum over $[x,B]$. Let $\pi'(x,i)$ be its smallest minimizer. For fixed $i$ and $x<x'\le B$, if $\pi'(x,i)\ge x'$, then $\pi'(x',i)=\pi'(x,i)$; otherwise, $\pi'(x',i)\ge x'>\pi'(x,i)$. Hence, $\pi'(\cdot,i)$ is nondecreasing and therefore measurable. Since $\pi'$ is also feasible, $\pi'\in\Pi$, and
    \begin{equation*}
        Q_{\pi_\theta}((x,i),\pi'(x,i))=G_{\theta,i}(\pi'(x,i))=\min_{y\in[x,B]}G_{\theta,i}(y)=(TJ_{\pi_\theta})(x,i).
    \end{equation*}
    In particular, $TJ_{\pi_\theta}$ is real-valued and measurable on $\cs$. This verifies Assumption~\ref{assumption: general-mdp-technical}.\ref{assumption: general-mdp-measurability}.

    \noindent \textbf{Assumption~\ref{assumption: general-mdp-technical}.\ref{assumption: general-mdp-integrability}:} Fix $\theta,\theta',\bar\theta\in\Theta$. The preceding verification shows that $J_{\pi_\theta}$ is nonnegative and bounded. Since $\pi_{\theta'}(x,i)\in[0,B]$ for every $(x,i)\in\cs$,
    \begin{equation*}
        0\le(TJ_{\pi_\theta})(x,i)\le(T_{\pi_{\theta'}}J_{\pi_\theta})(x,i)\le\frac{1}{1-\gamma}\max_{j\in\ci}\sup_{y\in[0,B]}C_j(y)<\infty.
    \end{equation*}
    Thus, $J_{\pi_\theta}$, $T_{\pi_{\theta'}}J_{\pi_\theta}$, and $TJ_{\pi_\theta}$ are bounded and therefore integrable with respect to the probability measure $\eta_{\pi_{\bar\theta}}$.
    
    Next, let $\pi'\in\Pi$ be arbitrary. Since demand is nonnegative, $C_i(y)=C_i(0)-by$ for $y<0$. The feasibility condition $x\le\pi'(x,i)\le B$ and the bound on $J_{\pi_\theta}$ therefore imply that
    \begin{equation*}
        0\le(T_{\pi'}J_{\pi_\theta})(x,i)\le\frac{1}{1-\gamma}\max_{j\in\ci}\sup_{y\in[0,B]}C_j(y)+b(-x)^+.
    \end{equation*}
    Under $\pi_\theta$, the post-order inventory is nonnegative, so $x_t=(x_{t-1}\vee\theta_{i_{t-1}})-D_{t-1}\ge-D_{t-1}$ and hence $(-x_t)^+\le D_{t-1}$ for every $t\ge1$. Consequently,
    \begin{equation*}
        \begin{aligned}
            \int_{\cs}(-x)^+\,\eta_{\pi_\theta}(dx,di)
            &=(1-\gamma)\sum_{t=0}^{\infty}\gamma^t\mathbb E_{\rho\otimes\nu}^{\pi_\theta}\left[(-x_t)^+\right] \le(1-\gamma)\mathbb E_\rho\left[(-x_0)^+\right]+\gamma\max_{j\in\ci}\mathbb E_{D\sim P_D(\cdot\mid j)}[D]<\infty,
        \end{aligned}
    \end{equation*}
    where finiteness follows from the finite means of the initial inventory and the conditional demand distributions, together with the finiteness of $\ci$. Thus, $T_{\pi'}J_{\pi_\theta}$ is integrable with respect to $\eta_{\pi_\theta}$ for every $\pi'\in\Pi$. This verifies Assumption~\ref{assumption: general-mdp-technical}.\ref{assumption: general-mdp-integrability}.

    \noindent \textbf{Assumption~\ref{assumption: general-mdp-regularity}.\ref{assumption: optimal policy}:} By \citet{song1993inventory}, the inventory system with Markov-modulated demand admits an optimal policy of the form $\pi_{\theta^*}(x, i)=x\vee\theta_i^*$ for some $\theta^*\in\Theta$. Hence, $\Pi_\Theta$ contains an optimal policy.

    \noindent \textbf{Assumption~\ref{assumption: general-mdp-regularity}.\ref{assumption: differentiablity}:} Fix $\theta\in\Theta$. The preceding measurability verification shows that $J_{\pi_\theta}(\cdot,i)$ is $\max\{h,b\}/(1-\gamma)$-Lipschitz continuous for every $i\in\ci$. Its derivative therefore exists almost everywhere and is bounded by the same constant. Since each conditional demand distribution admits a density, $C_i'(y)=(h+b)P_D(y\mid i)-b$, and dominated convergence gives
    \begin{equation*}
        G_{\theta,i}'(y)=C_i'(y)+\gamma\sum_{j\in\ci}p_{ij}\mathbb E_{D\sim P_D(\cdot\mid i)}\left[\partial_xJ_{\pi_\theta}(y-D,j)\right], \qquad y<B,
    \end{equation*}
    where $\partial_xJ_{\pi_\theta}$ is assigned zero wherever the derivative does not exist. Each expectation can be written as $\int_{-\infty}^{B}\partial_xJ_{\pi_\theta}(x,j)p_D(y-x\mid i)\,dx$. The boundedness of $\partial_xJ_{\pi_\theta}$ and continuity of translations in $L^1$ imply that this expression is continuous in $y$. Thus, $G_{\theta,i}$ is continuously differentiable on $(-\infty,B]$, with $G_{\theta,i}'(B)$ denoting its left derivative, and
    \begin{equation*}
        \sup_{y\le B}|G_{\theta,i}'(y)|\le\frac{\max\{h,b\}}{1-\gamma}, \qquad i\in\ci.
    \end{equation*}
    To handle boundary parameters, extend $G_{\theta,i}$ to $y>B$ by setting $G_{\theta,i}(y)=G_{\theta,i}(B)+G_{\theta,i}'(B)(y-B)$, retaining the same notation. This extension is continuously differentiable on $\R$ and satisfies the same derivative bound.
    
    Construct the inventory trajectories for all $\bar\theta\in(-B,2B)^{|\ci|}$ using the same initial state, environment process, and demand sequence. Write $x_0(\bar\theta)=x_0$ and $x_{t+1}(\bar\theta)=x_t(\bar\theta)\vee\bar\theta_{i_t}-D_t$. For $\theta',\bar\theta\in\Theta$, the definition of the discounted occupancy measure gives
    \begin{equation*}
        \cb(\pi_{\theta'}\mid\eta_{\pi_{\bar\theta}},J_{\pi_\theta})=(1-\gamma)\sum_{t=0}^{\infty}\gamma^t\mathbb E\left[G_{\theta,i_t}\left(x_t(\bar\theta)\vee\theta'_{i_t}\right)\right],
    \end{equation*}
    where the expectation is with respect to the common initial state, environment process, and demand sequence. The right-hand side defines an extension to $\theta',\bar\theta\in(-B,2B)^{|\ci|}$. Indeed, nonnegative demand and $x_0\le B$ imply that $x_t(\bar\theta)\vee\theta'_{i_t}\in[-B,2B]$, so the summands are uniformly bounded before discounting.
    
    The inventory recursion implies that $\bar\theta\mapsto x_t(\bar\theta)$ is $1$-Lipschitz with respect to the $\ell_\infty$ norm for every $t$. Fix $(\theta',\bar\theta)$ and $t$. The densities of the initial inventory and conditional demands ensure that the events $x_s(\bar\theta)=\bar\theta_{i_s}$ for some $0\le s<t$, or $x_t(\bar\theta)=\theta'_{i_t}$, have probability zero. Outside these events, the trajectory recursion is differentiable in a neighborhood of $\bar\theta$. For every $k\in\ci$, starting from $\partial_{\bar\theta_k}x_0(\bar\theta)=0$, we obtain
    \begin{equation*}
        \partial_{\bar\theta_k}x_{s+1}(\bar\theta)=\mathbf 1_{\{x_s(\bar\theta)>\bar\theta_{i_s}\}}\partial_{\bar\theta_k}x_s(\bar\theta)+\mathbf 1_{\{x_s(\bar\theta)<\bar\theta_{i_s}\}}\mathbf 1_{\{i_s=k\}}, \qquad 0\le s<t.
    \end{equation*}
    Consequently, $|\partial_{\bar\theta_k}x_t(\bar\theta)|\le1$ almost surely. Differentiating the integrand in the preceding series gives
    \begin{equation*}
        \begin{aligned}
            \partial_{\theta'_k}G_{\theta,i_t}\left(x_t(\bar\theta)\vee\theta'_{i_t}\right)&=\mathbf 1_{\{i_t=k,\ x_t(\bar\theta)<\theta'_k\}}G_{\theta,k}'(\theta'_k),\\
            \partial_{\bar\theta_k}G_{\theta,i_t}\left(x_t(\bar\theta)\vee\theta'_{i_t}\right)&=\mathbf 1_{\{x_t(\bar\theta)>\theta'_{i_t}\}}G_{\theta,i_t}'(x_t(\bar\theta))\partial_{\bar\theta_k}x_t(\bar\theta).
        \end{aligned}
    \end{equation*}
    Both derivatives are bounded in absolute value by $\max\{h,b\}/(1-\gamma)$, uniformly in $t$ and the parameters. The Lipschitz bounds provide the same bound for the corresponding difference quotients. Dominated convergence therefore permits differentiation under the expectation.
    
    To establish joint continuity of these derivatives, consider any sequence of parameter pairs converging to $(\theta',\bar\theta)$. For each fixed $t$, outside the zero-probability events described above, the finitely many strict comparisons in the trajectory recursion and the final maximum remain unchanged for all sufficiently large indices. The trajectory derivatives are then unchanged, and continuity of $G_{\theta,i}'$ implies convergence of both displayed derivatives. Dominated convergence shows that the partial derivatives of each expected summand are jointly continuous. The uniform bounds on the summands and their derivatives, together with the geometric discount weights, imply uniform convergence of the series and its partial derivative series. Hence, the extended mapping is jointly continuously differentiable on $(-B,2B)^{|\ci|}\times(-B,2B)^{|\ci|}$, an open set containing $\Theta\times\Theta$. In particular, for $\theta',\bar\theta\in\Theta$,
    \begin{equation*}
        \partial_{\theta'_k}\cb(\pi_{\theta'}\mid\eta_{\pi_{\bar\theta}},J_{\pi_\theta})=\eta_{\pi_{\bar\theta}}\left((-\infty,\theta'_k)\times\{k\}\right)G_{\theta,k}'(\theta'_k), \qquad k\in\ci.
    \end{equation*}
    This verifies Assumption~\ref{assumption: general-mdp-regularity}.\ref{assumption: differentiablity}. \Halmos
\end{proof}

\begin{lemma}\label{lemma: markov-demand-smoothness}
    Suppose the first three parts of Assumption~\ref{assumption: inventory} hold. Then $\nabla l$ is Lipschitz continuous on $\Theta$ with respect to the Euclidean norm.
\end{lemma}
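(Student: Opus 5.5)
From the proof of Lemma~\ref{lemma: markov-demand-regularity}, for every $\theta\in\Theta$ and $k\in\ci$,
\begin{equation*}
    \partial_{\theta_k}l(\theta)=\eta_{\pi_\theta}\left((-\infty,\theta_k)\times\{k\}\right)G_{\theta,k}'(\theta_k).
\end{equation*}
I will show that each factor is bounded and Lipschitz in $\theta$. The product of two bounded Lipschitz functions is Lipschitz, and $\ci$ is finite, so this gives Lipschitz continuity of $\nabla l$ in the $\ell_\infty$ norm and hence in the Euclidean norm. The bounds are already available: the occupancy mass lies in $[0,1]$, and $|G_{\theta,k}'|\le\max\{h,b\}/(1-\gamma)$.

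\emph{Step 1 (occupancy factor).} I use the common-randomness coupling $x_{t+1}(\theta)=x_t(\theta)\vee\theta_{i_t}-D_t$ and write
\begin{equation*}
    \eta_{\pi_\theta}\left((-\infty,\theta_k)\times\{k\}\right)=(1-\gamma)\sum_t\gamma^t\,\mathbb P\left(x_t(\theta)<\theta_k,\ i_t=k\right).
\end{equation*}
For $t=0$, independence and the $L_\rho$-Lipschitz CDF of $x_0$ give Lipschitz dependence on $\theta_k$. For $t\ge1$, I condition on $(x_{t-1},i_{t-1},i_t)$. Then $x_t=y_{t-1}-D_{t-1}$, where $y_{t-1}=x_{t-1}(\theta)\vee\theta_{i_{t-1}}$, and the conditional probability is $1-P_D(y_{t-1}-\theta_k\mid i_{t-1})$ up to a null set. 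This is $L_D$-Lipschitz in $(y_{t-1},\theta_k)$. Since $\theta\mapsto y_{t-1}(\theta)$ is $1$-Lipschitz in $\ell_\infty$ along each path, each term is $2L_D$-Lipschitz. Summing the geometric series gives a constant of order $L_\rho+L_D$.

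\emph{Step 2 (slope factor).} This step is the main obstacle. I need $|G_{\theta,k}'(\theta_k)-G_{\tilde\theta,k}'(\tilde\theta_k)|\lesssim\|\theta-\tilde\theta\|_\infty$. I split the difference as $G_{\theta,k}'(\theta_k)-G_{\theta,k}'(\tilde\theta_k)$ plus $G_{\theta,k}'(\tilde\theta_k)-G_{\tilde\theta,k}'(\tilde\theta_k)$.

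For the first term, I need $G_{\theta,k}'$ to be Lipschitz in $y$. Its derivative formula is $C_k'(y)+\gamma\sum_j p_{kj}\int\partial_xJ_{\pi_\theta}(x,j)\,p_D(y-x\mid k)\,dx$. The $C_k'$ part is $(h+b)L_D$-Lipschitz. For the integral part, bounded $\partial_xJ$ together with $L_D$-Lipschitz CDFs is not obviously enough, since translation continuity in $L^1$ only gives continuity, not a Lipschitz bound. I will instead integrate by parts in $x$: the integral equals $\int J_{\pi_\theta}(x,j)\,\partial_y p_D(y-x\mid k)\,dx$ plus boundary terms. If that route is blocked, I will use a second-order representation. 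I will write $J(\cdot,j)=G_{\theta,j}(\cdot\vee\theta_j)$ and iterate the derivative formula one step, so that $\partial_xJ$ becomes a smoothed object: the indicator $\mathbf 1_{x>\theta_j}$ times $G'_{\theta,j}$, where $G'_{\theta,j}$ is $C_j'$ plus a convolution. Then the $y$-dependence enters only through $P_D$-CDF evaluations, which are $L_D$-Lipschitz. This yields a bound of the form $\Delta\le c_1+\gamma\Delta$ on the Lipschitz modulus $\Delta$ of $G'$, closed by discounting in the same manner as in the proof of Lemma~\ref{lemma: markov-approx-convexity}. To justify the a priori finiteness of $\Delta$ needed to close this recursion, I truncate at a finite horizon first.

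For the second term, the dependence on $\theta$, I differentiate pathwise. I write
\begin{equation*}
    G_{\theta,k}'(y)=\mathbb E\sum_t\gamma^t C_{i_t}'\left(y_t\right)\cdot(\text{pathwise derivative of }y_t\text{ in }y),
\end{equation*}
where the pathwise derivative is an indicator product $\prod_s\mathbf 1\{x_s>\theta_{i_s}\}$. Changing $\theta$ flips these indicators only on events whose probability is $O(L_D\|\theta-\tilde\theta\|_\infty)$ per period. The $C'$ factors are bounded and Lipschitz. A union bound over the first $t$ periods gives a term of size $O(t\,\|\theta-\tilde\theta\|_\infty)$, and $\sum_t t\gamma^t<\infty$ makes the total finite.

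Combining Steps 1 and 2 through the product rule gives an explicit Lipschitz constant depending on $h$, $b$, $L_\rho$, $L_D$, and $(1-\gamma)^{-1}$.
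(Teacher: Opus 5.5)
Your proof is correct, but it is organized differently from the paper's.

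\textbf{The paper's route.} The paper never uses the factorization $\partial_{\theta_k}l(\theta)=\eta_{\pi_\theta}((-\infty,\theta_k)\times\{k\})\,G'_{\theta,k}(\theta_k)$. Instead it differentiates the whole discounted cost series pathwise under the common-noise coupling: $\nabla l(\theta)=(1-\gamma)\sum_t\gamma^t\mathbb E[C'_{i_t}(y_t(\theta))\nabla y_t(\theta)]$, where $\nabla y_t(\theta)$ is either zero or a coordinate vector. It then bounds the difference at $\theta,\theta'$ by two terms. The first is the Lipschitz modulus $(h+b)L_D$ of $C'$. The second is the probability that the two coupled trajectories disagree about ordering in some period $s\le t$, which is $O((L_\rho+tL_D)\|\theta-\theta'\|_2)$ and is summable against $\gamma^t$. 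This needs no regularity of $G_{\theta,k}$ in the action variable.

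\textbf{Your route.} You split the gradient into the occupancy mass and the slope $G'_{\theta,k}(\theta_k)$.

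Step~1 is sound. To compare two parameters along a path, condition on the full common history before $D_{t-1}$ together with $i_t$, not only on $(x_{t-1},i_{t-1},i_t)$ for a single $\theta$.

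Step~2 forces you to show that $G'_{\theta,k}$ is Lipschitz in $y$, i.e., that $G_{\theta,k}$ is $C^{1,1}$ in the action.
\begin{itemize}
\item As you suspected, integration by parts is unavailable, because $p_D$ need not be differentiable. You should commit to the fallback, which works.
\item Write $\partial_xJ_{\pi_\theta}(x,j)=\mathbf 1_{\{x>\theta_j\}}G'_{\theta,j}(x)$. The indicator contributes $L_D\max\{h,b\}/(1-\gamma)$ through the CDF. The factor $G'_{\theta,j}(y-D)$ contributes the modulus $\Delta$ itself, and only its values on $[0,B]$ are needed.
\item This gives $\Delta\le(h+b)L_D+\gamma\bigl(L_D\max\{h,b\}/(1-\gamma)+\Delta\bigr)$. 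Finiteness of $\Delta$ comes from your finite-horizon truncation plus uniform convergence of the truncated derivatives.
\item For the $\theta$-dependence of $G'_{\theta,k}$, your indicator-flip argument works. The same one-step recursion is simpler and directly gives $\sup_y|G'_{\theta,k}(y)-G'_{\tilde\theta,k}(y)|\le\gamma L_D\max\{h,b\}\|\theta-\tilde\theta\|_\infty/(1-\gamma)^2$.
\end{itemize}

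\textbf{What each buys.} Your route costs an extra fixed-point argument. With these bounds it also gives a Lipschitz constant of order $(1-\gamma)^{-2}$ instead of the paper's $(1-\gamma)^{-1}$. In exchange, it yields a by-product the paper does not state: the $Q$-value functions are $C^{1,1}$ in the action, with an explicit modulus. The paper's single pathwise argument is shorter and carries over almost verbatim to the cash-balance model.
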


\begin{proof}{Proof of Lemma~\ref{lemma: markov-demand-smoothness}}
    Use the inventory trajectories $x_t(\theta)$ constructed in the proof of Lemma~\ref{lemma: markov-demand-regularity} with the same initial state, environment process, and demand sequence, and write $y_t(\theta)=x_t(\theta)\vee\theta_{i_t}$. These trajectories are defined for $\theta\in(-B,2B)^{|\ci|}$ and satisfy $y_t(\theta)\in[-B,2B]$. At each fixed parameter, the threshold-equality events have probability zero, and the derivative recursion established there shows that $\nabla y_t(\theta)$ is either zero or a coordinate vector. In particular, $\|\nabla y_t(\theta)\|_2\le1$ almost surely. Moreover, $C_i'(y)=(h+b)P_D(y\mid i)-b$, so $|C_i'(y)|\le\max\{h,b\}$. The same dominated-convergence and uniform-convergence arguments therefore show that the discounted cost series defines a continuously differentiable extension of $l$ to $(-B,2B)^{|\ci|}$, with
    \begin{equation}\label{eq: markov-pathwise-gradient}
        \nabla l(\theta)=(1-\gamma)\sum_{t=0}^{\infty}\gamma^t\mathbb E\left[C_{i_t}'(y_t(\theta))\nabla y_t(\theta)\right], \qquad \theta\in\Theta.
    \end{equation}
    In particular, this representation also holds at boundary parameters.

    Fix $\theta,\theta'\in\Theta$. Since $|\theta_i-\theta_i'|\le\|\theta-\theta'\|_2$ for every $i\in\ci$, the maximum operation and induction on the inventory recursion give
    \begin{equation*}
        |x_t(\theta)-x_t(\theta')|\le\|\theta-\theta'\|_2, \qquad |y_t(\theta)-y_t(\theta')|\le\|\theta-\theta'\|_2, \qquad t\ge0.
    \end{equation*}
    If the two trajectories differ in whether to place an order at time $s$, then
    \begin{equation*}
        |x_s(\theta)-\theta_{i_s}|\le|x_s(\theta)-x_s(\theta')|+|\theta_{i_s}-\theta_{i_s}'|\le2\|\theta-\theta'\|_2.
    \end{equation*}
    At $s=0$, independence of $x_0$ and $i_0$ and the density bound for $x_0$ imply
    \begin{equation*}
        \mathbb P\left(|x_0-\theta_{i_0}|\le2\|\theta-\theta'\|_2\right)\le4L_\rho\|\theta-\theta'\|_2.
    \end{equation*}
    For $s\ge1$, condition on the history before $D_{s-1}$ is drawn and on $i_s$. By the conditional independence of demand and the next environment state, $D_{s-1}$ retains its conditional density bounded by $L_D$. Since $x_s(\theta)=y_{s-1}(\theta)-D_{s-1}$, it follows that
    \begin{equation*}
        \mathbb P\left(|x_s(\theta)-\theta_{i_s}|\le2\|\theta-\theta'\|_2\right)\le4L_D\|\theta-\theta'\|_2, \qquad s\ge1.
    \end{equation*}

    Whenever the two trajectories agree on whether to order in every period $s=0,\ldots,t$, the derivative recursion gives $\nabla y_t(\theta)=\nabla y_t(\theta')$. Since each gradient has norm at most one, the union bound yields
    \begin{equation*}
        \mathbb E\left[\|\nabla y_t(\theta)-\nabla y_t(\theta')\|_2\right] \le2\sum_{s=0}^{t}\mathbb P\left(|x_s(\theta)-\theta_{i_s}|\le2\|\theta-\theta'\|_2\right) \le8(L_\rho+tL_D)\|\theta-\theta'\|_2.
    \end{equation*}
    Each $C_i'$ is $(h+b)L_D$-Lipschitz continuous. Combining the preceding bounds with~\eqref{eq: markov-pathwise-gradient} therefore gives
    \begin{equation*}
        \begin{aligned}
            \|\nabla l(\theta)-\nabla l(\theta')\|_2
            &\le(1-\gamma)\sum_{t=0}^{\infty}\gamma^t\left[(h+b)L_D+8\max\{h,b\}(L_\rho+tL_D)\right]\|\theta-\theta'\|_2\\
            &=\left[(h+b)L_D+8\max\{h,b\}\left(L_\rho+\frac{\gamma L_D}{1-\gamma}\right)\right]\|\theta-\theta'\|_2.
        \end{aligned}
    \end{equation*}
    Thus, $\nabla l$ is Lipschitz continuous on $\Theta$ with respect to the Euclidean norm. \Halmos
\end{proof}

\subsection{Stochastic Cash-Balance Problem}\label{appendix: cash-balance}

\begin{lemma}\label{lemma: cash-balance-regularity}
    Suppose the first three parts of Assumption~\ref{assumption: cash balance} hold. Then the stochastic cash-balance model with the two-sided base-stock policy class $\Pi_\Theta$ and initial distribution $\rho$ satisfies Assumptions~\ref{assumption: general-mdp-technical} and~\ref{assumption: general-mdp-regularity}.
\end{lemma}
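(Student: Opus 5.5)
We follow the template of Lemma~\ref{lemma: markov-demand-regularity} and verify the two parts of Assumption~\ref{assumption: general-mdp-technical} and the two parts of Assumption~\ref{assumption: general-mdp-regularity} in turn.

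\textbf{Measurability.} The sets $\cs=\R$, $\ca=[\underline B,\bar B]$, and $\Gamma=\R\times[\underline B,\bar B]$ are Borel. The cost $g(x,y)=c(y,x)+C(y)$ is continuous, because $c$ is piecewise linear and $C$ is $\max\{h,p\}$-Lipschitz by the finite mean of $D$. The kernel $P(\cm\mid x,y)=\mathbb P(y-D\in\cm)$ is measurable as in the inventory case. For fixed $\theta$, post-adjustment levels lie in $[\underline B,\bar B]$. Hence $C(\pi_\theta(x))$ is bounded, while $c(\pi_\theta(x),x)\le \max\{|k|,|q|\}\,(\mathrm{dist}(x,[\underline B,\bar B]))$ grows at most linearly. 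Since $x_t=y_{t-1}-D_{t-1}$, only the first-period transaction cost can be unbounded, and
\begin{equation*}
    |J_{\pi_\theta}(x)|\le K_1+K_2|x|
\end{equation*}
for constants $K_1,K_2$ depending on $\E|D|$.

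We next show that $J_{\pi_\theta}$ is Lipschitz. Couple two trajectories from $x$ and $x'$ with common demands. The map $x\mapsto(x\vee\underline\theta)\wedge\bar\theta$ is $1$-Lipschitz and $c(\cdot,\cdot)$ is Lipschitz, so $J_{\pi_\theta}$ is Lipschitz with constant of order $(\max\{h,p\}+|k|+|q|)/(1-\gamma)$. Conditioning on the first step gives $J_{\pi_\theta}=T_{\pi_\theta}J_{\pi_\theta}$. Dominated convergence, using $\E|D|<\infty$, makes $G_\theta$ finite and continuous on $[\underline B,\bar B]$, and then $Q_{\pi_\theta}(x,y)=c(y,x)+G_\theta(y)$ is continuous on $\Gamma$.

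For the measurable minimizer, $y\mapsto c(y,x)+G_\theta(y)$ is continuous on the compact set $[\underline B,\bar B]$, so the minimum is attained. Choose the smallest minimizer. Since $Q_{\pi_\theta}$ is jointly continuous and the action set is a fixed compact set, a measurable selection theorem applies (e.g.\ \citealt[Section~3.3]{hernandez2012discrete}). Alternatively, the smallest argmin of a Carath\'eodory function over a fixed compact interval is lower semicontinuous and hence measurable. This settles Assumption~\ref{assumption: general-mdp-technical}.\ref{assumption: general-mdp-measurability}.

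\textbf{Integrability.} Every function involved is bounded by $K_1+K_2|x|$ for some constants. This includes $T_{\pi'}J_{\pi_\theta}$ for arbitrary feasible $\pi'$, since $\pi'(x)\in[\underline B,\bar B]$. It therefore suffices that $\int|x|\,\eta_{\pi_{\bar\theta}}(dx)<\infty$. This holds because
\begin{equation*}
    \E|x_0|<\infty \qquad\text{and}\qquad |x_t|\le \max\{|\underline B|,|\bar B|\}+|D_{t-1}| \text{ for } t\ge1,
\end{equation*}
which settles Assumption~\ref{assumption: general-mdp-technical}.\ref{assumption: general-mdp-integrability}.

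\textbf{Optimal policy and differentiability.} Assumption~\ref{assumption: general-mdp-regularity}.\ref{assumption: optimal policy} follows from \citet{eppen1969cash,whisler1967stochastic} together with the standing choice of $\underline B,\bar B$.

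For Assumption~\ref{assumption: general-mdp-regularity}.\ref{assumption: differentiablity}, build common-randomness trajectories $x_{t+1}(\bar\theta)=\pi_{\bar\theta}(x_t(\bar\theta))-D_t$ for $\bar\theta$ in an open neighborhood of $\Theta$. Here the clipping map is defined for any pair $(\underline\theta,\bar\theta)$ via $(x\vee\underline\theta)\wedge\bar\theta$, which is $1$-Lipschitz in parameters. Extend $G_\theta$ linearly beyond $[\underline B,\bar B]$ as in the inventory proof, and write
\begin{equation*}
    \cb(\pi_{\theta'}\mid\eta_{\pi_{\bar\theta}},J_{\pi_\theta})=(1-\gamma)\sum_t\gamma^t\,\E\bigl[c(\pi_{\theta'}(x_t),x_t)+G_\theta(\pi_{\theta'}(x_t))\bigr].
\end{equation*}

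By Assumption~\ref{assumption: cash balance}.\ref{assumption: cash balance-Lipschitz}, the events $x_s=\underline{\bar\theta}$, $x_s=\bar{\bar\theta}$, $x_t=\underline\theta'$, and $x_t=\bar\theta'$ have probability zero. Off these events the recursion is locally differentiable, with parameter derivatives that are $0$ or $\pm$ coordinate vectors. The transaction-cost kink $y=x$ is only hit on the region $\underline\theta'<x<\bar\theta'$, where the action does not depend on $\theta'$, so it causes no difficulty.

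The main obstacle is $C^1$ smoothness, because the integrands grow linearly in $|x_t|$ through $c$. Derivatives of the integrands are bounded by constants, so domination holds. The derivative of $G_\theta$ requires differentiating $\E[J_{\pi_\theta}(y-D)]$. We would write it as $\int \partial J_{\pi_\theta}(u)\,p_D(y-u)\,du$ and use bounded $\partial J$ plus $L^1$-continuity of translations to get continuity of $G_\theta'$. Joint continuity of the derivatives then follows from stability of the strict comparisons along sequences, exactly as in Lemma~\ref{lemma: markov-demand-regularity}. Uniform convergence of the discounted series completes the argument.
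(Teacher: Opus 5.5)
Your verification of Assumption~\ref{assumption: general-mdp-technical} follows the paper's route. That covers the linear growth of $J_{\pi_\theta}$ and $Q_{\pi_\theta}$, the coupling argument for Lipschitz continuity, measurable selection on the fixed compact interval, and the first-moment bound on $\eta_{\pi_{\bar\theta}}$. Your convolution argument for $G_\theta'$ also matches the paper.

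The gap is in the differentiability step, at parameters with coincident thresholds $\underline\theta=\bar\theta=a$. These points belong to $\Theta$. You extend by the clipping map $(x\vee\underline\theta')\wedge\bar\theta'$ for every pair, but for $\underline\theta'>\bar\theta'$ this map is identically $\bar\theta'$. Near $(a,a)$ with $x<a$ it equals $\min\{\underline\theta',\bar\theta'\}$, which is not differentiable. This failure occurs on the event $\{x<a\}$, which has positive probability (for $t=0$ it is at least $\alpha$), not on one of the null threshold-equality events you list.

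Concretely, take the partial derivative of your extension of $\theta'\mapsto\cb(\pi_{\theta'}\mid\eta_{\pi_{\bar\theta}},J_{\pi_\theta})$ in $\underline\theta'$ at $(a,a)$. Approached from inside $\Theta$ it equals $\eta_{\pi_{\bar\theta}}((-\infty,a))\,(k+G_\theta'(a))$. Approached from the region $\underline\theta'>\bar\theta'$ it equals $0$. These differ in general. The same kink enters through the $\bar\theta$-dependence of the trajectories $x_t(\bar\theta)$. Hence your extension is not $C^1$ on any open set containing $\Theta\times\Theta$. The ``stability of strict comparisons'' step also fails there, because the comparison between the two thresholds is not strict.

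The paper avoids this by extending with $\widetilde\pi_{\theta'}(x)=x+(\underline\theta'-x)^+-(x-\bar\theta')^+$ and transaction cost $k(\underline\theta'-x)^++q(x-\bar\theta')^+$. These agree with $\pi_{\theta'}(x)$ and $c(\pi_{\theta'}(x),x)$ on $\Theta$. Their derivatives in the thresholds are multiples of $\mathbf 1\{x<\underline\theta'\}$ and $\mathbf 1\{x>\bar\theta'\}$ regardless of the order of the thresholds, so they stay stable off null events.

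The price is that $\partial_x\widetilde\pi_{\theta'}=-1$ when $\bar\theta'<x<\underline\theta'$. The pathwise parameter derivatives of $x_t(\bar\theta)$ are then no longer $0$ or coordinate vectors; they are only bounded by $t$. Uniform convergence of the differentiated series therefore relies on $\sum_t\gamma^t(1+t)<\infty$.

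Alternatively, you could keep the clipping map on $\Theta\times\Theta$ and show that its derivatives extend continuously up to the boundary. You would then invoke a Whitney-type extension theorem, but that step must be argued separately.
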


\begin{proof}{Proof of Lemma~\ref{lemma: cash-balance-regularity}}
    We follow the proof of Lemma~\ref{lemma: markov-demand-regularity} and record the modifications needed for two-sided adjustments and transaction costs. The bounded action interval and the finite absolute first moments of $D$ and $x_0$ imply that every $\eta_{\pi_{\bar\theta}}$ has a finite first moment and that, for some constant $K<\infty$ independent of $\theta$,
    \begin{equation*}
        |J_{\pi_\theta}(x)|+\sup_{y\in[\underline B,\bar B]}|Q_{\pi_\theta}(x,y)|\le K(1+|x|), \qquad x\in\R,\quad\theta\in\Theta.
    \end{equation*}
    These bounds give all the required absolute-integrability properties, including those involving arbitrary $\pi'\in\Pi$. The same coupling argument shows that $J_{\pi_\theta}$ is Lipschitz continuous. Consequently, $Q_{\pi_\theta}$ is continuous, and minimization over the fixed compact action interval admits a measurable minimizing selector. Together with the measurability of the cost and transition kernel and the Bellman identity, these observations verify Assumption~\ref{assumption: general-mdp-technical}.

    The choice of $\underline B$ and $\bar B$ ensures that $\Pi_\Theta$ contains an optimal policy \citep{whisler1967stochastic,eppen1969cash}. Thus, Assumption~\ref{assumption: general-mdp-regularity}.\ref{assumption: optimal policy} holds. 
    
    For differentiability, fix $\theta\in\Theta$. The convolution argument in the proof of Lemma~\ref{lemma: markov-demand-regularity} shows that the defining formula~\eqref{eq: cash-G} extends $G_\theta$ to a continuously differentiable function on $\R$ with bounded derivative. To handle coincident thresholds, define, for arbitrary threshold pairs,
    \begin{equation*}
        \widetilde\pi_{\theta'}(x)\coloneqq x+(\underline\theta'-x)^+-(x-\bar\theta')^+.
    \end{equation*}
    This agrees with $\pi_{\theta'}$ on $\Theta$. Using a common initial state and demand sequence, construct $x_0(\bar\theta)=x_0$ and $x_{t+1}(\bar\theta)=\widetilde\pi_{\bar\theta}(x_t(\bar\theta))-D_t$. The series
    \begin{equation*}
        (1-\gamma)\sum_{t=0}^{\infty}\gamma^t\E\Big[ k(\underline\theta'-x_t(\bar\theta))^+ + q(x_t(\bar\theta)-\bar\theta')^+ +G_\theta\big(\widetilde\pi_{\theta'}(x_t(\bar\theta))\big)\Big]
    \end{equation*}
    agrees with $\cb(\pi_{\theta'}\mid\eta_{\pi_{\bar\theta}}, J_{\pi_\theta})$ on $\Theta\times\Theta$ and defines an extension to a bounded open neighborhood of this set. The auxiliary policy is $1$-Lipschitz in the state and in each threshold, so each coordinate derivative of $x_t(\bar\theta)$ is bounded in absolute value by $t$. The initial-state and demand densities make all threshold equalities null events. Hence, the same common-noise and dominated-convergence arguments as in the inventory proof establish continuous differentiability of each expected summand. Their absolute values are uniformly bounded on the neighborhood, and their derivatives are bounded by a constant times $1+t$. Since $\sum_{t=0}^{\infty}\gamma^t(1+t)<\infty$, the series defines a jointly continuously differentiable extension. This verifies Assumption~\ref{assumption: general-mdp-regularity}.\ref{assumption: differentiablity}. \Halmos
\end{proof}

\begin{proof}{Proof of Lemma~\ref{lemma: cash-approx-convexity}}
    Fix $\theta=(\underline\theta,\bar\theta)\in\Theta$. The proof follows the argument in Lemma~\ref{lemma: markov-approx-convexity}; we record only the modifications caused by the two thresholds and the transaction costs.

    Since $\eta_{\pi_\theta}\succeq(1-\gamma)\rho$, Assumption~\ref{assumption: cash balance}.\ref{assumption: cash balance-exploratory-initial-distribution} gives uniform lower bounds of $(1-\gamma)\alpha$ for both $\eta_{\pi_\theta}((-\infty,u))$ and $\eta_{\pi_\theta}((u,\infty))$, for every $u\in[\underline B,\bar B]$. Therefore, the Policy Gradient Theorem and the normal-cone argument used in the proof of Lemma~\ref{lemma: markov-approx-convexity} yield
    \begin{equation}\label{ineq: cash-lower-feasible-direction}
        \left(k+G_\theta'(\underline\theta)\right)(u-\underline\theta) \ge -\frac{\mathcal R(\theta)}{\alpha(1-\gamma)}|u-\underline\theta|, \qquad u\in[\underline B,\bar\theta],
    \end{equation}
    and
    \begin{equation}\label{ineq: cash-upper-feasible-direction}
        \left(G_\theta'(\bar\theta)-q\right)(u-\bar\theta) \ge -\frac{\mathcal R(\theta)}{\alpha(1-\gamma)}|u-\bar\theta|, \qquad u\in[\underline\theta,\bar B].
    \end{equation}

    Let
    \begin{equation*}
        m_\theta \coloneqq \min\left\{ 0, \inf_{x<y} \left[ \partial_-J_{\pi_\theta}(y) - \partial_+J_{\pi_\theta}(x) \right] \right\}.
    \end{equation*}
    By Lemma~\ref{lemma: cash-balance-regularity}, $J_{\pi_\theta}$ is Lipschitz continuous, so $m_\theta>-\infty$. As in the proof of Lemma~\ref{lemma: markov-approx-convexity}, differentiation under the expectation in \eqref{eq: cash-G} and the convexity of $C$ give
    \begin{equation}\label{ineq: cash-G-bootstrap}
        G_\theta'(z_2)-G_\theta'(z_1) \ge \gamma m_\theta, \qquad \underline B\le z_1<z_2\le\bar B.
    \end{equation}

    By \eqref{eq: cash-bellman-J-G}, for every $x<y$,
    \begin{equation*}
        \begin{aligned}
            \partial_-J_{\pi_\theta}(y)-\partial_+J_{\pi_\theta}(x)
            &= G_\theta'((y\vee\underline\theta)\wedge\bar\theta) - G_\theta'((x\vee\underline\theta)\wedge\bar\theta) \\
            &\quad+ \left(\mathbf 1_{\{x<\underline\theta\}}-\mathbf 1_{\{y\le\underline\theta\}}\right) \left(k+G_\theta'(\underline\theta)\right) \\
            &\quad+ \left(\mathbf 1_{\{x\ge\bar\theta\}}-\mathbf 1_{\{y>\bar\theta\}}\right) \left(G_\theta'(\bar\theta)-q\right).
        \end{aligned}
    \end{equation*}
    If $(x,y)$ crosses only the lower threshold, \eqref{ineq: cash-lower-feasible-direction} with $u=\bar\theta$ gives
    \begin{equation*}
        k+G_\theta'(\underline\theta) \ge -\frac{\mathcal R(\theta)}{\alpha(1-\gamma)}.
    \end{equation*}
    If $(x,y)$ crosses only the upper threshold, \eqref{ineq: cash-upper-feasible-direction} with $u=\underline\theta$ gives
    \begin{equation*}
        G_\theta'(\bar\theta)-q \le \frac{\mathcal R(\theta)}{\alpha(1-\gamma)}.
    \end{equation*}
    If $(x,y)$ crosses both thresholds, the derivative difference equals $k+q\ge0$. Combining these observations with \eqref{ineq: cash-G-bootstrap} gives
    \begin{equation*}
        \partial_-J_{\pi_\theta}(y)-\partial_+J_{\pi_\theta}(x) \ge \gamma m_\theta - \frac{\mathcal R(\theta)}{\alpha(1-\gamma)}, \qquad x<y.
    \end{equation*}
    If the infimum in the definition of $m_\theta$ is nonnegative, the required intermediate bound is immediate. Otherwise, taking the infimum over $x<y$ and rearranging gives
    \begin{equation*}
        \partial_-J_{\pi_\theta}(y)-\partial_+J_{\pi_\theta}(x) \ge -\frac{\mathcal R(\theta)}{\alpha(1-\gamma)^2}, \qquad x<y.
    \end{equation*}

    Substituting this intermediate bound into the derivative representation of $G_\theta$ and using the convexity of $C$ proves \eqref{ineq: cash-G-slope-linear}. In particular, when $\mathcal R(\theta)=0$, $G_\theta'$ is nondecreasing, so $G_\theta$ is convex on $[\underline B,\bar B]$. Under Assumption~\ref{assumption: cash balance}.\ref{assumption: cash balance-density-lower-bound},
    \begin{equation*}
        C'(z_2)-C'(z_1) = (h+p)\mathbb P(z_1<D\le z_2) \ge (h+p)\mu_D(z_2-z_1).
    \end{equation*}
    Combining this inequality with the same intermediate bound proves \eqref{ineq: cash-G-slope-quadratic}. \Halmos
\end{proof}

\begin{proof}{Proof of Theorem~\ref{thm: cash-balance-PLK}}
    If $\underline B=\bar B$, then $\Theta$ is a singleton and all conclusions are immediate. Suppose $\underline B<\bar B$ and fix $\theta=(\underline\theta,\bar\theta)\in\Theta$. By Lemma~\ref{lemma: cash-balance-regularity}, Assumptions~\ref{assumption: general-mdp-technical} and~\ref{assumption: general-mdp-regularity} hold.

    We first prove part~(i). Fix $x\in\R$ and write $a=\pi_\theta(x)$. By~\eqref{eq: cash-Q-G}, the derivative of $Q_{\pi_\theta}(x,y)$ with respect to $y$ equals $G_\theta'(y)-q$ for $y<x$ and $G_\theta'(y)+k$ for $y>x$. At $y=x$, the derivative increases by $k+q\ge0$.

    Suppose first that $\underline\theta<\bar\theta$. Taking $u=\bar\theta$ in~\eqref{ineq: cash-lower-feasible-direction} and $u=\underline\theta$ in~\eqref{ineq: cash-upper-feasible-direction} gives
    \begin{equation*}
        k+G_\theta'(\underline\theta)\ge-\frac{\mathcal R(\theta)}{\alpha(1-\gamma)}, \qquad G_\theta'(\bar\theta)-q\le\frac{\mathcal R(\theta)}{\alpha(1-\gamma)}.
    \end{equation*}
    If $\underline\theta>\underline B$, decreasing the lower threshold in~\eqref{ineq: cash-lower-feasible-direction} also gives $k+G_\theta'(\underline\theta)\le\mathcal R(\theta)/[\alpha(1-\gamma)]$. If $\bar\theta<\bar B$, increasing the upper threshold in~\eqref{ineq: cash-upper-feasible-direction} gives $G_\theta'(\bar\theta)-q\ge-\mathcal R(\theta)/[\alpha(1-\gamma)]$. For actions above $a$, we use the lower bound at $\underline\theta$ when $x\le a$, and the lower bound at $\bar\theta=a$ when $x>a$. For actions below $a$, we use the upper bound at $\bar\theta$ when $x\ge a$, and the upper bound at $\underline\theta=a$ when $x<a$. The nonnegative derivative jump at $y=x$ preserves the required bounds when the action crosses the current state.

    When $\underline\theta=\bar\theta=a$, increasing the upper threshold, whenever $a<\bar B$, gives $G_\theta'(a)-q\ge-\mathcal R(\theta)/[\alpha(1-\gamma)]$. Since $k+q\ge0$, the same lower bound holds for $G_\theta'(a)+k$. Similarly, decreasing the lower threshold, whenever $a>\underline B$, gives $G_\theta'(a)+k\le\mathcal R(\theta)/[\alpha(1-\gamma)]$, and the same upper bound holds for $G_\theta'(a)-q$. These bounds cover every feasible direction from $a$.

    Combining the threshold derivative bounds with~\eqref{ineq: cash-G-slope-linear} yields
    \begin{equation}\label{ineq: cash-path-derivative-linear}
        \begin{aligned}
            \partial_-Q_{\pi_\theta}(x,a+t)&\ge-\frac{\mathcal R(\theta)}{\alpha(1-\gamma)^2}, &&0<t<\bar B-a,\\
            \partial_+Q_{\pi_\theta}(x,a-t)&\le\frac{\mathcal R(\theta)}{\alpha(1-\gamma)^2}, &&0<t<a-\underline B,
        \end{aligned}
    \end{equation}
    where the one-sided derivatives are taken with respect to the action. Integrating gives
    \begin{equation}\label{ineq: cash-one-step-linear}
        Q_{\pi_\theta}(x,y)-Q_{\pi_\theta}(x,\pi_\theta(x))\ge-\frac{\mathcal R(\theta)}{\alpha(1-\gamma)^2}|y-\pi_\theta(x)|, \qquad x\in\R,\ y\in[\underline B,\bar B].
    \end{equation}
    Since $|y-\pi_\theta(x)|\le\bar B-\underline B$, the Bellman equation implies
    \begin{equation*}
        \begin{aligned}
            J_{\pi_\theta}(x)-(TJ_{\pi_\theta})(x) =Q_{\pi_\theta}(x,\pi_\theta(x))-\min_{y\in[\underline B,\bar B]}Q_{\pi_\theta}(x,y) \le\frac{\bar B-\underline B}{\alpha(1-\gamma)^2}\mathcal R(\theta), \qquad \forall x\in\R.
        \end{aligned}
    \end{equation*}
    Since $\theta$ was arbitrary, Condition~\eqref{condition: general-mdp-pointwise-error} holds with exponent $1$ and $C=(\bar B-\underline B)/[\alpha(1-\gamma)^2]$. Theorem~\ref{thm: general-mdp-pointwise-PLK} gives~\eqref{ineq: cash-balance-PLK-linear}. At any stationary point $\theta$ of $l$ over $\Theta$, $\mathcal R(\theta)=0$, so~\eqref{ineq: cash-balance-PLK-linear} gives $l(\theta)=l(\theta^*)$.

    We next prove part~(ii). Suppose Assumption~\ref{assumption: cash balance}.\ref{assumption: cash balance-density-lower-bound} holds and $h+p>0$. Replacing~\eqref{ineq: cash-G-slope-linear} with~\eqref{ineq: cash-G-slope-quadratic} in the preceding derivative argument gives
    \begin{equation*}
        \begin{aligned}
            \partial_-Q_{\pi_\theta}(x,a+t)&\ge-\frac{\mathcal R(\theta)}{\alpha(1-\gamma)^2}+(h+p)\mu_Dt, &&0<t<\bar B-a,\\
            \partial_+Q_{\pi_\theta}(x,a-t)&\le\frac{\mathcal R(\theta)}{\alpha(1-\gamma)^2}-(h+p)\mu_Dt, &&0<t<a-\underline B.
        \end{aligned}
    \end{equation*}
    Integrating and completing the square yield, for every $x\in\R$ and $y\in[\underline B,\bar B]$,
    \begin{equation}\label{ineq: cash-one-step-quadratic}
        \begin{aligned}
            Q_{\pi_\theta}(x,y)-Q_{\pi_\theta}(x,\pi_\theta(x)) & \ge-\frac{\mathcal R(\theta)}{\alpha(1-\gamma)^2}|y-\pi_\theta(x)|+\frac{(h+p)\mu_D}{2}|y-\pi_\theta(x)|^2\\
            & \ge-\frac{\mathcal R(\theta)^2}{2(h+p)\mu_D\alpha^2(1-\gamma)^4}.
        \end{aligned}
    \end{equation}
    Consequently,
    \begin{equation*}
        J_{\pi_\theta}(x)-(TJ_{\pi_\theta})(x)\le\frac{\mathcal R(\theta)^2}{2(h+p)\mu_D\alpha^2(1-\gamma)^4}, \qquad \forall x\in\R.
    \end{equation*}
    Thus, Condition~\eqref{condition: general-mdp-pointwise-error} holds with exponent $2$ and $C=1/[2(h+p)\mu_D\alpha^2(1-\gamma)^4]$. Theorem~\ref{thm: general-mdp-pointwise-PLK} gives~\eqref{ineq: cash-balance-PLK}. \Halmos
\end{proof}

\begin{lemma}\label{lemma: cash-balance-smoothness}
    Suppose the first three parts of Assumption~\ref{assumption: cash balance} hold. Then $\nabla l$ is Lipschitz continuous on $\Theta$ with respect to the Euclidean norm.
\end{lemma}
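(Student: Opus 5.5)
We will follow the proof of Lemma~\ref{lemma: markov-demand-smoothness}, replacing one-sided base-stock trajectories by two-sided ones. The work is in the pathwise gradient formula and in controlling how often two nearby trajectories take different branches of the policy.

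\textbf{Step 1: pathwise gradient.} Fix a common initial state and demand sequence, and let $y_t(\theta)=\widetilde\pi_\theta(x_t(\theta))$ with $x_{t+1}(\theta)=y_t(\theta)-D_t$, using the auxiliary policy from the proof of Lemma~\ref{lemma: cash-balance-regularity}. The per-period cost is
\[
c_t(\theta)=k(\underline\theta-x_t)^+ + q(x_t-\bar\theta)^+ + C(y_t).
\]
Outside the null events $x_s=\underline\theta$ or $x_s=\bar\theta$, the recursion is locally affine in $\theta$.

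\emph{Key structural observation.} If $x_s<\underline\theta$ then $y_s=\underline\theta$; if $x_s>\bar\theta$ then $y_s=\bar\theta$. In both cases the dependence on the past is reset, so $\nabla y_s$ is a coordinate vector. Otherwise $y_s=x_s=y_{s-1}-D_{s-1}$, so $\nabla y_s=\nabla y_{s-1}$, with $\nabla y_{-1}:=0$. By induction $\|\nabla y_t\|_2\le1$ almost surely. This improves the crude bound $t$ used in Lemma~\ref{lemma: cash-balance-regularity}.

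The adjustment cost contributes $k\mathbf 1_{\{x_t<\underline\theta\}}(e_1-\nabla x_t)$ and $-q\mathbf 1_{\{x_t>\bar\theta\}}(e_2+\nabla x_t)$. Here $\nabla x_t=\nabla y_{t-1}$ also has norm at most one. Dominated and uniform convergence, exactly as in Lemma~\ref{lemma: markov-demand-smoothness}, then give
\[
\nabla l(\theta)=(1-\gamma)\sum_{t}\gamma^t\,\E\bigl[\nabla c_t(\theta)\bigr],
\]
where each integrand is bounded by $K:=|k|+|q|+\max\{h,p\}$ up to a factor $2$.

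\textbf{Step 2: Lipschitz trajectories.} The map $\widetilde\pi$ is $1$-Lipschitz jointly in the state and in each threshold, and it is a clamp. Induction therefore gives $|x_t(\theta)-x_t(\theta')|\le\|\theta-\theta'\|_2$ and the same bound for $y_t$.

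\textbf{Step 3: branch disagreement.} The two trajectories can be on different branches at time $s$ only if $x_s(\theta)$ lies within $2\|\theta-\theta'\|_2$ of $\underline\theta$ or of $\bar\theta$. At $s=0$ this event has probability at most $8L_\rho\|\theta-\theta'\|_2$. For $s\ge1$, conditioning on the history before $D_{s-1}$ and using the density bound of $D$ gives probability at most $8L_D\|\theta-\theta'\|_2$. If the branches agree at all times $s\le t$, then all indicators and trajectory gradients coincide. The only remaining difference is $C'(y_t(\theta))-C'(y_t(\theta'))$, which is at most $(h+p)L_D\|\theta-\theta'\|_2$ because $C'(y)=(h+p)F_D(y)-p$. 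A union bound then gives
\[
\E\|\nabla c_t(\theta)-\nabla c_t(\theta')\|_2\le\bigl[(h+p)L_D+cK(L_\rho+tL_D)\bigr]\|\theta-\theta'\|_2
\]
for an absolute constant $c$.

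\textbf{Step 4: summation.} Summing against $(1-\gamma)\gamma^t$ uses $\sum_t\gamma^t t<\infty$ and yields a finite Lipschitz constant.

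\textbf{Main obstacle.} The delicate point is the possibility $\underline\theta=\bar\theta$ or $\underline\theta'=\bar\theta'$, and boundary parameters. There, a disagreement must be counted at each threshold separately, and the gradient formula at such points should be taken from the $C^1$ extension to an open neighborhood. We first check that the union-bound argument treats the two thresholds independently, each contributing at most the stated probability, so that coincidence of the thresholds costs only a factor $2$.
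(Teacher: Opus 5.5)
Your proposal is correct and follows essentially the paper's route: coupled trajectories whose pathwise gradients are zero or coordinate vectors, a union bound of $8(L_\rho+tL_D)\|\theta-\theta'\|_2$ on branch disagreements through time $t$, the $(h+p)L_D$-Lipschitz continuity of $C'$ on the agreement event, and summation against $(1-\gamma)\gamma^t$. There are two differences. First, the paper proves the bound for $\theta,\theta'\in\operatorname{int}\Theta$, where the clamp form holds on a whole neighborhood, and extends it to $\Theta$ by continuity of $\nabla l$ from Lemma~\ref{lemma: cash-balance-regularity}, rather than reading boundary gradients off the $C^1$ extension. Second, your downward-adjustment term should be $q\mathbf 1_{\{x_t>\bar\theta\}}(\nabla x_t-e_2)$; this is a sign slip that affects none of your bounds.
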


\begin{proof}{Proof of Lemma~\ref{lemma: cash-balance-smoothness}}
    If $\underline B=\bar B$, then $\Theta$ is a singleton, and the conclusion is immediate. Suppose $\underline B<\bar B$. By Lemma~\ref{lemma: cash-balance-regularity}, $\nabla l$ is continuous on $\Theta$. Hence, it suffices to establish a uniform Lipschitz bound for $\theta,\theta'\in\operatorname{int}\Theta$. Couple the trajectories using the same initial state and demand sequence, and write $y_t(\theta)=\pi_\theta(x_t(\theta))$. The recursion gives
    \begin{equation*}
        |x_t(\theta)-x_t(\theta')|\le\|\theta-\theta'\|_2, \qquad |y_t(\theta)-y_t(\theta')|\le\|\theta-\theta'\|_2.
    \end{equation*}
    As in the proof of Lemma~\ref{lemma: markov-demand-smoothness}, the pathwise gradients of $x_t$ and $y_t$ are either zero or coordinate vectors almost surely. If the two trajectories agree on whether to adjust upward, make no adjustment, or adjust downward in every period through time $t$, their pathwise gradients agree. Otherwise, for some $s\le t$, $x_s(\theta)$ lies within $2\|\theta-\theta'\|_2$ of one of its two thresholds. Applying the same density and union-bound argument to both thresholds, the probability of at least one disagreement through time $t$ is at most
    \begin{equation*}
        8(L_\rho+tL_D)\|\theta-\theta'\|_2.
    \end{equation*}

    The transaction-cost derivative is $k(\nabla y_t(\theta)-\nabla x_t(\theta))$ under an upward adjustment, $q(\nabla x_t(\theta)-\nabla y_t(\theta))$ under a downward adjustment, and zero otherwise. Since $|C'(y)|\le\max\{h,p\}$, the period-cost derivative satisfies
    \begin{equation*}
        \left\|\nabla_\theta g(x_t(\theta),y_t(\theta))\right\|_2\le2\max\{|k|,|q|\}+\max\{h,p\}.
    \end{equation*}
    When the adjustment decisions agree through time $t$, the transaction-cost derivatives agree, and the $(h+p)L_D$-Lipschitz continuity of $C'$ bounds the difference between the period-cost derivatives by $(h+p)L_D\|\theta-\theta'\|_2$. On the remaining event, their difference is bounded by twice the preceding uniform bound. Differentiating the discounted cost series and summing these estimates as in the proof of Lemma~\ref{lemma: markov-demand-smoothness} yield
    \begin{equation*}
        \|\nabla l(\theta)-\nabla l(\theta')\|_2 \le\left[(h+p)L_D+16\left(2\max\{|k|,|q|\}+\max\{h,p\}\right)\left(L_\rho+\frac{\gamma L_D}{1-\gamma}\right)\right]\|\theta-\theta'\|_2.
    \end{equation*}
    Continuity of $\nabla l$ extends this bound to all $\theta,\theta'\in\Theta$. \Halmos
\end{proof}

\end{document}